\definecolor{WIMgreen}{RGB}{60 134 132}
\definecolor{red_pers}{RGB}{204 37 41}
\definecolor{UMblue}{RGB}{4 47 86}
\definecolor{myteal}{RGB}{0 123 137}
\definecolor{dartmouthgreen}{rgb}{0.05, 0.5, 0.06}\definecolor{cobalt}{rgb}{0.0, 0.28, 0.67}\definecolor{coolblack}{rgb}{0.0, 0.18, 0.39}
\definecolor{glaucous}{rgb}{0.38, 0.51, 0.71}\definecolor{hooker\'sgreen}{rgb}{0.0, 0.44, 0.0}\definecolor{lemonchiffon}{rgb}{1.0, 0.98, 0.8}\definecolor{oucrimsonred}{rgb}{0.6, 0.0, 0.0}\definecolor{radicalred}{rgb}{1.0, 0.21, 0.37}\definecolor{raspberry}{rgb}{0.89, 0.04, 0.36}\definecolor{royalazure}{rgb}{0.0, 0.22, 0.66}
\definecolor{dex}{RGB}{138 18 34}
\definecolor{darkgreen}{RGB}{0 69 0}
\definecolor{darkblue}{RGB}{0 0 99}
\newcommand{\R}{\mathbb{R}}
\newcommand{\N}{\mathbb{N}}
\newcommand{\Ex}{\mathbb{E}}
\newcommand{\PR}{\mathbb{P}}
\newcommand{\ind}[1]{\mathds{ 1 }_{\{{#1}\}}}
\newcommand{\inda}[1]{\mathds{ 1 }_{{#1}}}
\newcommand*{\f}{\mathcal{F}}
\newcommand{\eps}{\varepsilon}
\DeclareMathOperator{\Exp}{Exp}
\DeclareMathOperator{\ccc}{C}
\DeclareMathOperator{\dist}{dist}
\DeclareMathOperator{\rm}{RM}
\DeclareMathOperator{\supp}{supp}
\DeclareMathOperator{\cpt}{cpt}
\DeclareMathOperator{\ran}{\mathcal{R}}
\newcommand*\bigcdot{\mathpalette\bigcdot@{.6}}
\newcommand*\bigcdot@[2]{\mathbin{\vcenter{\hbox{\scalebox{#2}{$\,\m@th#1\bullet\,$}}}}}
\def\XXint#1#2#3{{\setbox0=\hbox{$#1{#2#3}{\int}$ }
\vcenter{\hbox{$#2#3$ }}\kern-.6\wd0}}
\theoremstyle{definition}
\newtheorem*{assumption}{Assumptions}
\newtheorem*{remark*}{Remark}
\newtheorem{theorem}{Theorem}[section]
\newtheorem{proposition}[theorem]{Proposition}
\newtheorem{remark}[theorem]{Remark}
\newtheorem{lemma}[theorem]{Lemma}
\newtheorem{definition}[theorem]{Definition}
\newtheorem{corollary}[theorem]{Corollary}
\title{\fontsize{16}{19} \selectfont 		On the existence of Markovian randomized equilibria in Dynkin games of war-of-attrition-type
}
\author{Sören Christensen\thanks{Kiel University, Mathematical Department\\
email: \href{mailto:christensen@math.uni-kiel.de}{christensen@math.uni-kiel.de}}, Boy Schultz\thanks{Kiel University, Mathematical Department\\email: \href{mailto:schultz@math.uni-kiel.de}{schultz@math.uni-kiel.de}}}
\date{\today}
\begin{document}

\maketitle
\begin{abstract}
In optimal stopping problems, a Markov structure guarantees Markovian optimal stopping times (first exit times). Surprisingly, there is no analogous result for Markovian stopping games once randomization is required. This paper addresses this gap by proving the existence of Markov-perfect equilibria in a specific type of stopping game -- a general nonzero-sum Dynkin games of the war-of-attrition type with underlying linear diffusions. 
Our main mathematical contribution lies in the development of appropriate topologies for Markovian randomized stopping times. This allows us to establish the existence of equilibria within a tractable and interpretable class of stopping times, paving the way for further analysis of Markovian stopping games.
\end{abstract}
\noindent
\small{\textit{\href{https://mathscinet.ams.org/mathscinet/msc/msc2020.html}{2020 MSC:}} Primary 91A55; secondary 60G40, 91A15\\
\textit{key words:} Dynkin games, diffusion, war of attrition, Markovian randomized stopping times, Markov-perfect equilibria.}
\normalsize



\section{Introduction}
The aim of this paper is to develop a method to prove the existence of Markovian randomized equilibria for stopping games with underlying linear diffusions. 

\subsection{Stopping problems and stopping games}
To clarify the relevance and the basic idea, we first give a brief overview of the stopping problem, i.e. the game with one player, since in this case the theoretical results are already well established, and then compare the situation with that of stopping games. 
In general optimal stopping problems, one asks for the maximisation of 
$
V = \sup_{\tau} \mathbb{E} \left[ G_\tau\right]
$ over all stopping times $\tau$. For a concrete solution, however, more structure is needed to make the problem accessible. This structure is typically given by a driving Markov process $X$, in our case a linear diffusion, and a value function of the form
\begin{align*}
	V(x)= \sup_\tau \mathbb{E}_{x}\left[ e^{-r \tau}F(X_\tau)\right], \enskip X_0=x \in E.
\end{align*}
In this setting, it is well known that under certain assumptions the first exit time $\tau^\ast = \inf \{ t \ge 0 : X_t \notin D\}$ from the {continuation region} $D=\{x\in E:V(x)>F(x)\}$ is optimal. It is therefore sufficient to restrict the optimization from the unmanageably large class of all stopping times to the class of first exit times, which means in particular that the stopping decision only depends on the current state of the process. This makes the problem time-consistent (Bellman principle).
Moreover, it opens the door to the solution of the problem, e.g. via an associated free boundary problem or in the form of concave envelopes \cite{peskir2006optimal,Dayanik,MR4414697}. 

However, many important real-world situations involve two or more decision makers with possibly conflicting goals, leading to stopping games. Stopping games arise in many applications, such as preemption games \cite{fudenberg1985preemption}, game options \cite{KiferGameOptions}, wars of attrition \cite{decamps2022mixed}, and market entry/exit decisions \cite{fine1989exit,fudenberg1985preemption, ghemawat1985exit,weeds2002strategic}.

Starting with \cite{dynkin1969game}, \emph{Dynkin games} have become a fundamental mathematical model for stopping games. In these, two players $i=1,2$ choose  stopping times $\tau_i$ and are interested in maximizing functionals of the form
\begin{equation*}
J_i(\tau_i, \tau_j) = \mathbb{E} \left[ F^i_{\tau_i} \mathbb{I}_{\{\tau_i < \tau_j\}} + G^i_{\tau_j} \mathbb{I}_{\{\tau_j < \tau_i\}} + H^i_{\tau_i} \mathbb{I}_{\{\tau_i = \tau_j\}} \right], \quad i \in \{1,2\}, j = 3-i.
\end{equation*} The question of the existence and representation of equilibria is then investigated. 
There are basically two types of results. 
In Markovian games, a number of papers study the existence and characterization of equilibria in the first exit times, reflecting the Markovian structure of the problem. However, this can only be expected under rather restrictive ordering conditions on the payoffs, see e.g. \cite{attard2017nash,attard2018nonzero,Bensoussan_Friedman_1974,cattiaux1990markov,deanglis2018nash,ekstrom2008optimal,ekstrom2006value,Friedman_1973}.

For other classes of stopping games, i.e. if the mentioned ordering conditions are not fulfilled, there are no equilibria in first exit times anymore. Instead, it is now necessary to consider randomized stopping times \cite{shmaya2014equivalence} to obtain  the existence of ($\epsilon$-)equilibria \cite{Shmaya_Deterministic_Epsilon_Existence,laraki2013equilibrium,laraki2005value,touzi2002continuous}. 
 While these results at least essentially prove the existence of equilibria, a central challenge remains: in general, the randomized stopping times have a strongly path dependent nature and form an almost unmanageably large class. Even with an underlying Markov structure, it remains unclear whether the equilibrium stopping decision is Markovian in the sense that it depends only on the current state of the underlying process -- as one would hope from the case of optimal stopping problems. Therefore, it is necessary to search for a more specific class of randomized stopping times that allows a clearer characterization and game-theoretic interpretation of the equilibria.

Although Dynkin games are probably the most fundamental class of stopping games, many other classes have been considered. On the one hand, these are variants of Dynkin games, e.g. games with more than two players \cite{HamadeneConti,laraki2005timing_games}, games with {partial information}. \cite{deangelis2020ghost, deangelis2022dynkin}, but also contests \cite{feng2015gambling,seel2013gambling} and equilibrium formulations of time-inconsistent problems \cite{bayraktar2024optimal, bayraktar2023stability, christensen2018finding,huang2021optimal}. We will not discuss these problems in detail here, but our goal in this paper is to develop a technique that has the potential to be applicable to these situations as well.

\subsection{Markovian randomized stopping times}
From the previous discussion, it seems natural to consider Markovian stopping times as candidates for equilibria in stopping games. In contrast to stopping problems, however, these must be randomized times in order to have the potential for proving general existence theorems in the Markovian case. 

Examples of such Markovian randomized stopping times have already been considered in certain stopping games. 
First, there are stopping time of the form
\begin{align}\label{gwebshery}
	\inf\{t\geq 0: X_t \notin D\mbox{ or }A_t\geq E\},\;\;A_t:=\int_0^t \psi(X_t)dt ,
\end{align}
where $D$ is a set in the state space of $X$,
$E\sim \mbox{Exp}(1)$ independent of all other random sources and $\psi$ some non negative function. For Dynkin games such equilibria have been considered in \cite{steg2015symmetric} and for time-inconsistent problems in \cite{christensen2020time,christensen2023time}.
In the time-inconsistent problem discussed in \cite{tan2021failure}, it turns out that stopping times of the form \eqref{gwebshery} are not sufficient to obtain equilibria and in \cite{bodnariu2022local} the considered class was extended to 
\begin{align}
    \label{asdkasdi1}
	A_t:=\sum_{i=1}^{n}\psi_i l_t^{x_i}(X)+\int_0^t \psi(X_t)dt,
\end{align}
where $\left(l_t^{x_i}(X)\right)_{t\geq 0}$ is the \emph{local time} of $X$ at $x_i$, 
$n$ is a fixed number, 
$x_i\in\{x_1,\dots,x_n\} \subset D$, 
$\psi_i\in (0,\infty)$ are fixed constants. So, in the initial states $x_i$ the process can now be stopped with probability in $(0,1)$.  Such stopping times with $\psi=0$ were also considered in \cite{ekstrom2017dynkin} in the context of zero-sum Dynkin games with heterogeneous beliefs.

\cite{decamps2022mixed} investigates a variant of the stopping game considered later in this article, a war-of-attrition game for an underlying linear diffusion. First, the general class of Markovian randomized stopping times for linear diffusions is introduced and studied, and it is discussed why the term ``Markovian'' is justified. It is also emphasized that the general class of such stopping times can be described by sets and Radon measures. We will return to this in the following section. Based on this, Markov-perfect equilibria are characterized via a variational system and are found to be given by special additive functionals of the form \eqref{asdkasdi1}.


However, theorems on the existence of equilibria in Markovian randomized stopping times are not given in any of the previously mentioned articles. Until the very recent independent preprint by Decamps, Gensbittel and Mariotti \cite{decamps2024existence} the only result in this direction known to the authors was for the discrete-time and discrete-space case in \cite{christensen2023markovian}, see also \cite{ChristensenLindensjoe_meanvariance} for related results in discrete time-inconsistent problems. In fact, shortly after the publication of the first preprint version of the present paper \cite{christensen2024existencemarkovianrandomizedequilibria}, which only contained the results up to the end of Section \ref{sec:final_proof}, \cite{decamps2024existence} gave an alternative direct proof of Theorem \ref{natbdryexists}. In their work, Decamps, Gensbittel and Mariotti apply the fixed point theorem of Eilenberg and Montgomery \cite{eilenberg1946fixed}, based on a topology that turns out to be very similar to the ones introduced here. 

\subsection{Main contributions and structure of the paper}

The previous discussion shows that one of the central open questions in the treatment of Markovian stopping games is the existence of Markovian equilibria. 
The aim of this paper is to take a step in this direction and to give such a result for an important class of Dynkin games for underlying linear diffusions. More specifically, we consider war-of-attrition-type stopping games, in which it is well known that randomization is necessary. However, we have treated the problem in such a way that our approach can also serve as a blueprint for other stopping games. 

In Section \ref{T1} we will introduce the setting and formulate the main results, Theorem \ref{absbdryexists} and Theorem \ref{natbdryexists}. The former deals with the existence of Markov-perfect equilibria for absorbing boundaries and the latter for natural boundaries. Unsurprisingly, our approach to existence is an application of a Kakutani-type fixed point theorem to a best-response mapping. The key question is what topology to use on the space of Markovian randomized stopping times $\mathcal Z$. 
We address this question in Section \ref{sectopo}, where we introduce two topologies. In particular, we will show that $\mathcal Z$ has suitable compactness properties, see Theorem \ref{comp2}. This result is also of interest on its own.
As a first step towards the proof our first main existence Theorem \ref{absbdryexists}, in Section \ref{secdisc} we then consider an auxiliary problem in which the players are only allowed to stop in a finite subset of the state space. In this case, the space underlying the fixed point problem is finite-dimensional, so that the topological questions are easier to handle. 
We then use the compactness of $\mathcal Z$ to select a convergent subsequence from the equilibria of the auxiliary problems. Its limit in $\mathcal Z$ is then the candidate for an equilibrium, which we will verify in Section \ref{sec:final_proof}.
In the subsequent Section \ref{sec:naturalboundary} we extend our first main theorem with an exhaustion argument to prove the other main result, Theorem \ref{natbdryexists}.
One advantage of our approach is that the approximation and exhaustion scheme used in the proofs is flexible enough to be adaptable to different types of boundary behaviors of the underlying diffusion, probably even beyond those discussed in the present work. Second, the approach shows that general equilibria evolve from equilibria in simpler, possibly even numerically tractable, games as a limit in distribution.

\section{Setup and main results} \label{T1}
In this paper we consider a regular linear Itô diffusion $X = (X_t)_{t \in [0,\infty)}$ taking values in an interval $I$ with $I \subset \R$ and defined on a filtered probability space $(\Omega, \f, (\f_t)_{t \in [0,\infty)},\PR)$ satisfying the usual hypotheses. Generally we assume that  the behavior of $X$ in the interior $I^\circ$ of $I$ is governed by the stochastic differential equation
\begin{align}
    d X_t = \mu(X_t) dt + \sigma(X_t) dW_t , \quad X_0=x\in I^\circ\label{eq:dynamics}
\end{align} with an $(\f_t)_{t \in [0,\infty)}$-adapted, real valued, standard Brownian motion $W = (W_t)_{t \in [0,\infty)}$ and Lip\-schitz continuous coefficients $\mu: I \to \R$, $\sigma: I \to (0,\infty)$. A jointly continuous version of the local time process of $X$ at $y\in I$ will be denoted by $(l^y_t)_{t \in [0,\infty)}$. The existence of such a version follows e.g. from \cite[Theorem (44.2)]{rogers2000diffusions}. We set $\f^X_\infty := \sigma(X_t:t\ge0)$ and denote the canonical shift operator associated to $X$ by $\theta$. Let $E_1,E_2 \sim \Exp(1)$ be two fixed random variables on $(\Omega, \f,\PR)$ such that $E_1,E_2$ and $X$ are independent. If $Y$ is any random variable on $(\Omega, \f, (\f_t)_{t \in [0,\infty)},\PR)$ with values in some metric space, we denote its distribution by $\PR^Y$. As usual, we write $\PR_x$ for the conditional distribution of $\PR$ given $X_0=x$ and $\Ex_x$ for the corresponding expectation operator.

For locally compact, second countable metric spaces $\mathcal{X}$ we use the following notations for the spaces of measures, probability measures, and Radon measures on $\mathcal{X}$, respectively:
\begin{align*}
    \mathcal{M}(\mathcal{X})&:=\{\mu:\mu\ge0 \text{ measure on }\mathcal{X}\},\\
    \mathcal{M}^1(\mathcal{X})&:=\{\mu\in \mathcal{M}(\mathcal{X}): \mu(\mathcal{X}) =1\},\\
    \rm(\mathcal{X})&:= \{\mu \in \mathcal{M}(\mathcal{X}):\mu \text{ is locally finite} \}
\end{align*} 
Moreover, we regard $\rm(\varnothing)=\{0\}$, where $0$ denoted the zero measure.
We denote the first exit time from measurable $D\subset I$ by
\begin{align*}
    \tau^D &:= \inf \{ t \ge 0 : X_t \not\in D\}
\end{align*} and the first entry time into $y\in I$ or measurable $B\subset I$ by
\begin{align*}
    \eta_y:= \inf \{ t \ge 0 : X_t =y\}, \quad \eta^B:=\{t \ge 0: X_t\in B\},
\end{align*}respectively. Further, we denote the set of all  $(\f_t)_{t \in [0,\infty)}$-stopping times by $\mathcal{T}$.

We now come to the introduction of the general class of Markovian randomized stopping times, extending the notation of \eqref{gwebshery}. For this purpose, we consider stopping times that stop at the first exit from $D$ and in $D$ with a stopping rate described by the measure $\lambda$. This is specified as follows.
Let $D \subset I$ open (in $I$) and $\lambda \in \rm(D)$. We set the additive functional $A^{D,\lambda} = (A^{D,\lambda}_t)_{t \in [0,\infty)}$ generated by $X,D$ and $\lambda$ to be given by
\begin{align}
    A^{D,\lambda}_t(\omega):= \int_D l^y_t(\omega) \lambda(dy) + \infty \ind{\tau^D(\omega) \le t},\quad t \ge 0.\label{func}
\end{align} 

Other general notations used in the following are $\overline{D}$ for the closure of a set $D\subset \R$, $D^\circ$ for its interior, $\partial D$ for its boundary and $D^c$ for its complement $I\setminus D$. Given $\mathcal{X}, \mathcal{Y}$ and $\mathcal{X}'\subset\mathcal{X}$ and a function $f\colon \mathcal{X}\to \mathcal{Y}$ we denote the restriction of $f$ to $\mathcal{X}'$ by $f\vert_{\mathcal{X}'}$.

\begin{definition}(Markovian randomized stopping time)\\
Let $D \subset I$ open (in $I$), $\lambda \in \rm(D)$, and $E\sim \Exp(1)$ a random variable on $(\Omega, \f,\PR)$ that is independent of $X$.  We define the \textit{Markovian randomized (stopping) time} generated by $D$, $\lambda$ and $E$ as
\begin{align*}
    \tau^{D,\lambda} := \inf\{ t \ge 0: A_t^{D,\lambda} \ge E\}.
\end{align*} The space of all Markovian randomized times based on the random variable $E$ is denoted by
\begin{align*}
    \mathcal{Z}:= \mathcal{Z}(E):=\{ \tau^{D,\lambda}: D\subset I \text{ open (in } I),  \lambda\in \rm(D)\}
\end{align*} 
and is a subset of the set of all randomized stopping times $\ran:= \ran(E)$.
For $\tau = \tau^{D,\lambda} \in \mathcal{Z}$, we set $A^{\tau}:=A^{D,\lambda}$. For open or closed $U\subset I$ we set
\begin{align*}
    &\mathcal{Z}^U:=\mathcal{Z}^U(E):=\{ \tau^{D,\lambda}\in\mathcal{Z}(E) : D \subset U\},\\
    &\mathcal{Z}_U:=\mathcal{Z}_U(E):=\{ \tau^{D,\lambda}\in\mathcal{Z}(E) : I \setminus D \subset U, \,\supp(\lambda)\subset U \}.
\end{align*} 
In the following we specify the generating random variable $E$ only if necessary.
\end{definition}

One way to make the idea that $\tau\in \mathcal Z$ is Markovian precise is that with a natural extension of the shift operator $\theta$ it holds that
\begin{align*}
    \ind{\tau \ge \sigma} \tau = \ind{\tau\ge\sigma} \theta_\sigma \circ \tau + \sigma
\end{align*} in distribution for all $\f^X_\infty$-measurable $\sigma$, see \cite{christensen2023time}. 
More detailed discussions can be found in \cite[Section 3]{decamps2022mixed}, see also \cite[I.II.4]{Borodin} for the connection to killing times. 
In the following, player 1 will use the random device $E=E_1$ introduced above for randomization and player 2 will use $E_2$.

We consider a Dynkin game, i.e.\, a two player game of stopping based on the reward functionals 
\begin{align}
    &J^1:I \times \ran(E_1) \times \ran(E_2) \to \R, \notag\\
    &J^1(x,\tau_1, \tau_2):= \Ex_x[e^{-r_1 \tau_1} g_1(X_{\tau_1}) \ind{\tau_1 \le \tau_2}+e^{-r_1 \tau_2} f_1(X_{\tau_2}) \ind{\tau_1 > \tau_2}],\notag\\
    &V^1(x,\tau_2) := \sup_{\tau_1'\in \ran(E_1)} J^1(x,\tau_1', \tau_2)\label{stoppr}
\end{align}
\begin{align*}
    &J^2:I \times \ran(E_1) \times \ran(E_2) \to \R, \\
    &J^2(x, \tau_1,\tau_2):= \Ex_x[e^{-r_2 \tau_2}g_2(X_{\tau_2}) \ind{\tau_2 \le \tau_1}+e^{-r_2 \tau_1}f_2(X_{\tau_1}) \ind{\tau_2 > \tau_1}],\\
    &V^2(x,\tau_1) := \sup_{\tau_2'\in \ran(E_2)} J^1(x,\tau_1, \tau_2')
\end{align*} where $g_1,g_2,f_1,f_2:I \to \R$ are continuous and $r_1,r_2 \ge 0$. Here, player $i\in\{1,2\}$ tries to maximize the function $J^i$ by choosing $\tau_i \in \ran(E_i)$, while the other arguments $x\in I$ and $\tau^j\in \ran(E_j)$, $j\neq i$ are perceived as fixed. If necessary, we use the convention $g_i(X_\infty):=\lim_{t\to\infty}g_i(X_t)$, $i=1,2$.


\begin{definition}(equilibrium)\label{defequi} \\
    Let $x \in I$. We call a pair $(\tau_1,\tau_2) \in \ran(E_1)\times \ran(E_2)$ \textit{equilibrium at $x$} if
    \begin{align}
        &J^1(x,\tau_1, \tau_2) \ge J^1(x,\tau_1', \tau_2) \quad \text{and} \label{cond1}\\
        &J^2(x, \tau_1,\tau_2) \ge J^2(x, \tau_1,\tau_2') \label{cond2}
    \end{align} for all $\tau_1'\in\ran(E_1)$ and all $\tau_2' \in \ran(E_2)$. If $(\tau_1,\tau_2)$ is an equilibrium at $x$ for all $x\in I$, we call $(\tau_1,\tau_2)$ \textit{equilibrium}. An equilibrium $(\tau_1,\tau_2)\in \mathcal{Z}(E_1)\times \mathcal{Z}(E_2)$ is called \textit{Markov perfect equilibrium}.
\end{definition}

Even though we will look for Markov-perfect equilibria in the much smaller and more manageable class $\mathcal Z\subseteq \ran$, it should be noted that these must satisfy the equilibrium conditions \eqref{cond1} and \eqref{cond2} for general challengers $\tau'\in \ran$. 
\begin{assumption}\;
        \begin{enumerate}
        \renewcommand{\labelenumi}{(\theenumi)}
        \renewcommand{\theenumi}{A}
        \item \label{A} $g_1\le f_1$ and $g_2\le f_2$.
        \end{enumerate}
        
        \begin{enumerate}
        \renewcommand{\labelenumi}{(\theenumi)}
        \renewcommand{\theenumi}{B\arabic{enumi}}
            \item \label{B1} $X$ lives on a compact interval $I\subset \R$ with absorbing boundary points $y\in\partial I$.
            \item \label{B2} $f_1(y)=g_1(y)$ and $f_2(y)=g_2(y)$ for all $y\in\partial I$.
        \end{enumerate}
        \begin{enumerate}
        \renewcommand{\labelenumi}{(\theenumi)}
        \renewcommand{\theenumi}{C\arabic{enumi}}
        \item \label{C1} $X$ lives on an open interval $I\subset \R$ with natural boundary $\partial I$.
        \item \label{C2}$\max\{\Ex_x[\sup_{t\in [0,\infty)}e^{-r_it} \vert g_i(X_t)\vert ],\Ex_x[\sup_{t\in [0,\infty)} e^{-r_it}\vert f_i(X_t)\vert ]\}<\infty$ for $i=1,2$ and all $x\in I$.
        \item \label{C3} $\lim_{t\to \infty} e^{-r_it} g_i(X_t)=\lim_{t\to \infty} e^{-r_it} f_i(X_t)=0$ $\PR_x$-a.s.\ for $i=1,2$ and all $x\in I$.
        \end{enumerate}

\end{assumption}


\begin{remark}
Assumptions \eqref{A} corresponds to the general situation of a war of attrition, where both players prefer to stop second, so that a randomization strategy seems appropriate.
\eqref{B1} and \eqref{C1} specify the boundary behavior of $X$, while \eqref{B2} and \eqref{C2}, \eqref{C3} are technical assumptions associated to the corresponding boundary condition.
In particular in the case of an absorbing boundary of $X$ it is easily seen that the equality  $f_i(y)=g_i(y)$ for $y\in\partial I$, for one player $i$ is actually necessary for the existence of equilibria.  
    
\end{remark}


Based on the assumptions stated above our main theorems, with their proofs given in Sections \ref{sec:final_proof} and \ref{sec:naturalboundary} respectively, are the following:

\begin{theorem}\label{absbdryexists}
    Under the assumptions \eqref{A}, \eqref{B1} and \eqref{B2} there exists a Markov perfect equilibrium for \eqref{stoppr}.
\end{theorem}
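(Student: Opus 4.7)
The plan is to realize a Markov-perfect equilibrium as a limit of equilibria of finite-dimensional discretized games, following the blueprint of Sections \ref{secdisc}--\ref{sec:final_proof}. A direct application of a Kakutani-type fixed point theorem to the best-response correspondence $(\tau_1,\tau_2)\mapsto \mathrm{BR}_1(\tau_2)\times \mathrm{BR}_2(\tau_1)$ on $\mathcal Z(E_1)\times \mathcal Z(E_2)$ is obstructed by the lack of an obvious linear structure on $\mathcal Z$, since an element $\tau^{D,\lambda}$ is parametrized by an open set $D$ together with a Radon measure $\lambda\in\rm(D)$. Discretizing the admissible stopping rates onto a fixed finite grid restores convexity and places us in a finite-dimensional Kakutani setting.

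\textit{Step 1 (finite auxiliary game).} Choose an increasing sequence $F_n\subset I^\circ$ of finite sets with $\bigcup_n F_n$ dense in $I$ and restrict both players to elements $\tau^{D,\lambda}\in \mathcal Z$ with $\supp(\lambda)\subset F_n$ and $I\setminus D\subset F_n$. Each such strategy is encoded by the finite parameter vector $(\lambda(\{y\}),\ind{y\in D^c})_{y\in F_n}\in([0,\infty]\times\{0,1\})^{F_n}$, and after identifying a forced stop at $y$ with $\lambda(\{y\})=\infty$ the strategy space is compact and convex. Standard resolvent/Feynman--Kac formulas for $J^i$ show that the payoffs depend continuously on these parameters, so the best-response correspondence has nonempty, compact, convex values. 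Kakutani--Fan--Glicksberg then yields a discretized equilibrium $(\tau_1^n,\tau_2^n)$ within this restricted class.

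\textit{Step 2 (compactness and limit extraction).} By Theorem \ref{comp2}, $\mathcal Z$ is compact in the topology introduced in Section \ref{sectopo}; passing to a subsequence, $\tau_i^n\to \tau_i^\ast$ in $\mathcal Z(E_i)$ for $i=1,2$. The topology is engineered so that the additive functionals $A^{\tau_i^n}$ converge appropriately to $A^{\tau_i^\ast}$, so that $\tau_i^n\to\tau_i^\ast$ in distribution. Combined with assumption \eqref{B} (bounded payoffs on compact $I$) and dominated convergence, this gives $J^i(x,\tau_1^n,\tau_2^n)\to J^i(x,\tau_1^\ast,\tau_2^\ast)$ for every $x\in I$ and $i=1,2$.

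\textit{Step 3 (verification against arbitrary $\ran$-challengers).} It remains to upgrade the equilibrium property against discretized challengers to the equilibrium property against arbitrary $\tau_i'\in \ran(E_i)$. Fix $x\in I$, $\tau_1'\in\ran(E_1)$ and $\epsilon>0$; the aim is to produce $\tau_1^{\prime,n}$ in the $n$-th discretized class with $J^1(x,\tau_1^{\prime,n},\tau_2^n)\ge J^1(x,\tau_1',\tau_2^\ast)-\epsilon$ for all sufficiently large $n$. Here the war-of-attrition ordering \eqref{A} is decisive: since $g_1\le f_1$, replacing $\tau_1'$ by its postponement onto the grid $F_n$ can only convert a reward of type $g_1$ into one of type $f_1\ge g_1$ on the event that the opponent has stopped in the meantime, so the loss is controlled by the uniform continuity moduli of $g_1,f_1$ on the compact set $I$ guaranteed by \eqref{B}--\eqref{C}. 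Chaining with the discretized equilibrium inequality $J^1(x,\tau_1^{\prime,n},\tau_2^n)\le J^1(x,\tau_1^n,\tau_2^n)$ and Step 2 yields $J^1(x,\tau_1',\tau_2^\ast)\le J^1(x,\tau_1^\ast,\tau_2^\ast)+\epsilon$; sending $\epsilon\to 0$ and symmetrizing in $i$ proves \eqref{cond1} and \eqref{cond2}. The main obstacle is precisely this step: passing from Markovian discretized deviations to general path-dependent deviations in $\ran$ while keeping the approximation error small -- without assumption \eqref{A}, a discretized approximation of such a challenger could strictly improve the payoff, and the limit $(\tau_1^\ast,\tau_2^\ast)$ would fail to be a true Markov-perfect equilibrium in $\ran$.
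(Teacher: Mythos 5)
Your overall architecture---discretize, apply Kakutani, extract a limit via the compactness of Theorem \ref{comp2}, and verify the equilibrium property against general challengers---is the same as the paper's, but the proposal asserts continuity at exactly the two places where it genuinely fails, and that is where the real work lies. In Step 1, the payoffs $J^i$ are \emph{not} continuous in the discretization parameters: the integrand $\ind{\tau_1\le\tau_2}e^{-r_1\tau_1}g_1(X_{\tau_1})+\ind{\tau_1>\tau_2}e^{-r_1\tau_2}f_1(X_{\tau_2})$ is discontinuous on the diagonal $\{\tau_1=\tau_2\}$, and simultaneous stopping has positive probability whenever both players stop deterministically (rate $\infty$) at the same grid point with $f_i>g_i$ there. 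Lemma \ref{MC} gives continuity only away from such configurations (via the quasi-independence Lemma \ref{qi}), and the paper must augment the best-response correspondence with the conditions \eqref{nonstop}, \eqref{nonstop2} to obtain a closed graph (Lemma \ref{lemma_closed}); without such a device the Kakutani argument does not go through. Likewise, convexity of the best-response values does not follow from continuity but from the indifference principle (Lemma \ref{indiff} \eqref{indiff3} together with Remark \ref{laberrhabarber}). In Step 2, ``dominated convergence'' does not yield $J^i(x,\tau_1^n,\tau_2^n)\to J^i(x,\tau_1^\ast,\tau_2^\ast)$ for the same reason: one must show that the limiting pair charges no mass to the discontinuity set, i.e.\ that the limiting stopping boundaries of the two players do not coincide at points where $f_1>g_1$. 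This is Lemma \ref{M1cont} \eqref{M1cont2}, whose proof uses the equilibrium property of the $(\tau_1^n,\tau_2^n)$ themselves; it is not a soft convergence statement.

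Step 3 also contains a gap. Your postponement argument would at best give $J^1(x,\tau_1^{\prime,n},\tau_2^n)\ge J^1(x,\tau_1',\tau_2^n)-\epsilon$ (and even for that you must control the displacement of $X_{\tau_1'}$ and the discount factor on the event that the opponent does \emph{not} stop in the interim, which \eqref{A} does not address); the chain of inequalities then still silently requires $\liminf_n J^1(x,\tau_1',\tau_2^n)\ge J^1(x,\tau_1',\tau_2^\ast)$, which is the lower-semicontinuity statement of Lemma \ref{M1cont} \eqref{M1cont1} and must be proved. The paper avoids approximating path-dependent challengers altogether: Lemma \ref{indiff1} shows that the best response to a Markovian randomized time is a standard one-player Markovian stopping problem, whose value is exhausted by times supported on the dense union $\bigcup_m U_m$ (Lemma \ref{indiff} \eqref{indiff15}); combining this with \eqref{M1cont1} closes the argument in Theorem \ref{5.2}. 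Your instinct that \eqref{A} is decisive is correct---it is what makes the payoff integrand lower semicontinuous---but as written Steps 1--3 each rest on an unproved continuity claim.
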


\begin{theorem}\label{natbdryexists}
    Under the assumptions \eqref{A}, \eqref{C1}, \eqref{C2} and \eqref{C3} there exists a Markov perfect equilibrium for \eqref{stoppr}.
\end{theorem}

\begin{remark}
    Given Assumption \eqref{A} and \eqref{C2}, the technique from Section \ref{sec:naturalboundary} can be adapted to treat half-open interval state spaces $I$ with one absorbing and one natural boundary point, provided that \eqref{B2} holds at the absorbing boundary and \eqref{C3} holds whenever the absorbing boundary is not hit in finite time. However, in order not to inflate the already somewhat extensive notation, we have decided to limit ourselves to the two previous statements in this paper.
\end{remark}


\section{Topologization of Markovian randomized times} \label{sectopo}
Our approach to proving the existence theorem is to apply a suitable fixed point theorem of Kakutani type (see, e.g., \cite[Lemma 20.1]{osborne_rubinstein}) to the best response mapping. 
The prerequisites for the application of such fixed point theorems are always variants of the following:
\begin{itemize}
    \item compactness of the underlying space
    \item closed graph of the best response mapping
\item convexity of the best response sets
\end{itemize}
It can be seen that the choice of topology is essential and that there is no canonical topology for the space of Markovian randomized stopping times. 
We will actually consider two different topologies for our line of argument. One is based on the description of $\mathcal Z$ by a representing measure, the other on the distribution of the stopped process.
We would like to emphasize that the following results are not based on the specific stopping game we look at, but are of general interest. In particular the assumptions \eqref{A}, \eqref{B1}, \eqref{B2}, \eqref{C1}, \eqref{C2}, \eqref{C3} are not needed in this entire section.

\subsection{$\mathcal{M}$-topology}
First, we consider a topology that we will use later to prove fixed point theorems in the discretized game. It is based on the fact that discrete measures can be described by their counting densities. More precisely, in the following definition it is considered which local-time stopping rates $\lambda^D(\{x\})\in[0,\infty]$ a stopping time $\tau^{D,\lambda}$ uses on a discrete set $U$.
Now, the $\mathcal M$-topology is built so that $\tau,\tau_1,\tau_2,...\in\mathcal{Z}_U$ converge to $\tau$ if and only if the stopping rates converge for all $x\in U$. More precisely:

\begin{definition}($\mathcal{M}$-topology)\\
Let $U\subset I$ be closed in $I$. Let $\iota_1: \mathcal{Z}_U \to \mathcal{M}(U)$ be given by
\begin{align*}
    \tau^{D,\lambda} \mapsto \lambda^D, \quad \lambda^D(A):= \begin{cases}
        \lambda (A), & \text{if }A \subset D, \\
        \infty, &\text{if } A\setminus D \neq\varnothing,
    \end{cases} \quad A \in \mathcal{B}(U).
\end{align*} Moreover, for finite or countable $U$ let $\iota_2: \mathcal{M}(U) \to [0,1]^{U} $ be given by
\begin{align*}
    \mu \mapsto \left(\frac{\mu(\{x\})}{1+\mu(\{x\})}\right)_{x\in U}, 
\end{align*} 
where we write $\frac{\infty}{1+\infty}:=1$.
For finite or countable $U$, we equip $\mathcal{M}(U)$ with the pullback topology under $\iota_2$ and we define the \textit{$\mathcal{M}$-topology} as the pullback topology on $\mathcal{Z}(U)$ under the mapping $\iota_1$ or $\iota:=\iota_2 \circ \iota_1$ respectively. \\
For $\tau \in \mathcal{Z}_U$ we call $\iota_1(\tau)$ \textit{stopping rate (of $\tau$)} and for $x \in U$ we call $\iota_1(\tau)(\{x\})$ \textit{stopping rate (of $\tau$) at $x$}. In particular, this extends to the case of $U=I$, where $\mathcal{Z}=\mathcal{Z}_U$.
\end{definition}

\begin{remark}\label{remM} 
    \begin{enumerate}[(i)]
        \item \label{remM3} For our (generally non-symmetric) Dynkin games, we will encounter the product space $\mathcal{Z}_U(E_1) \times \mathcal{Z}_U(E_2)$ that encodes the strategies of both players. The $\mathcal{M}$-topology on the product space will be defined as the product topology of the $\mathcal{M}$-topologies on both components. Equivalently, we could define an embedding $\overline{\iota}:\mathcal{Z}(E_1) \times \mathcal{Z}(E_2) \to [0,1]^U \times [0,1]^U$, $\overline{\iota}:= (\iota,\iota)$ and define the $\mathcal{M}$-topology on $\mathcal{Z}_U(E_1) \times \mathcal{Z}_U(E_2)$ as the pullback topology under $\overline{\iota}$.
        \item For finite or countable $U \subset I$, the mappings $\iota_1$ and $\iota_2$ (and thus the composition $\iota$) are one-to-one and onto, so the spaces $ [0,1]^{U}, \mathcal{M}(U)$ and $\mathcal{Z}_U$ with the $\mathcal{M}$-topology are homeomorphic. In the case of finite $U$, we will see in the next section that this allows to use the compactness and convexity of the space $ [0,1]^{U} \subset \R^{U}$ in order to construct a best response mapping that allows for the application of Kakutani's fixed point theorem \cite[Lemma 20.1]{osborne_rubinstein}. A similar approach was used for discrete processes in \cite{christensen2023markovian,ChristensenLindensjoe_meanvariance}.
        \item For uncountable $U\subset I$, it is well-known that measures on infinite spaces are not characterized by the one-point evaluations. In our terminology, this means that the mapping $\iota_2$ is not one-to-one in this case. This prohibits the same approach for such $U$, especially for $U=I$. This makes it impossible to use the topology $\mathcal M$ fruitfully for the general case. 
    \end{enumerate}
\end{remark}

\begin{lemma}\label{mcont} Let $U\subset I$ finite, $\tau,\tau_1,\tau_2,\tau_3,...\in \mathcal{Z}_U$ with $\tau_n \stackrel{n \to \infty}{\longrightarrow} \tau$ in the $\mathcal{M}$-topology. 
\begin{enumerate}[(i)]
    \item \label{mcont1}For all $T\in[0,\infty)$ we have $$A^{\tau_n}_t(\omega) \stackrel{n \to \infty}{\longrightarrow} A^{\tau}_t(\omega)$$ uniformly in $t\in[0,T]$ for all $\omega\in \{ A^{\tau}_T< \infty\}$.
    \item \label{mcont2} For all $x\in I$ there is a $\PR_x$-nullset $N_1$ such that for all $\eps>0$, $t\in[0,\infty)$ and all $\omega\in \{A^{\tau}_t= \infty\}\setminus N_1$ we have 
    \begin{align*}
        A^{\tau_n}_{t+\eps}(\omega) \stackrel{n \to \infty}{\longrightarrow} A^{\tau}_{t+\eps}(\omega) = A^{\tau}_{t}(\omega)=\infty.
    \end{align*}
    \item \label{mcont3} $\tau_n \stackrel{n \to \infty}{\longrightarrow} \tau$ $\PR_x$-a.s.\ for all $x\in I$.
    \item \label{mcont4} We have $X_{\tau_n}\stackrel{n \to \infty}{\longrightarrow} X_{\tau}$ $\PR_x$-a.s.\ for all $x\in I$.
\end{enumerate}    
\end{lemma}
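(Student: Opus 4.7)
The plan is to pass from topological convergence $\iota(\tau_n)\to\iota(\tau)$ of the stopping rates, to pathwise convergence $A^{\tau_n}\to A^\tau$ of the additive functionals, and finally to convergence $\tau_n\to\tau$ of the first hit times, from which (iv) will fall out. First I would unpack what $\mathcal{M}$-convergence says pointwise on $U$: for each $x\in U$, $\lambda_n^{D_n}(\{x\})\to\lambda^D(\{x\})$ in the compactified sense of $\iota_2$. Because $U$ is finite and $\tau_n\in\mathcal{Z}_U$ forces $I\setminus U\subset D_n$, this yields $D\subset D_n$ for all sufficiently large $n$, together with $\lambda_n(\{x\})\to\lambda(\{x\})$ for every $x\in D\cap U$ and $\lambda_n^{D_n}(\{x\})\to\infty$ for every $x\in U\setminus D$ (either because $x\notin D_n$ along a subsequence, or because $x\in D_n$ with $\lambda_n(\{x\})\to\infty$).

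For (i), on $\{A^\tau_T<\infty\}=\{\tau^D>T\}$ the continuous path of $X$ stays strictly inside the open set $D$ throughout $[0,T]$, so $l^x_T=0$ for every $x\in U\setminus D$, and for $n$ large also $\tau^{D_n}>T$, hence the infinity term is inactive. Both $A^{\tau_n}_t$ and $A^\tau_t$ then reduce on $[0,T]$ to the finite sum over $x\in D\cap U$ of $l^x_t$-terms weighted by the respective rates, and uniform convergence on $[0,T]$ follows from the estimate
\[
\sup_{t\in[0,T]}\bigl|A^{\tau_n}_t-A^\tau_t\bigr|\le\sum_{x\in D\cap U}l^x_T(\omega)\,\bigl|\lambda_n(\{x\})-\lambda(\{x\})\bigr|\longrightarrow 0.
\]
For (ii), the integral $\int_D l^y_t\,\lambda(dy)$ is always finite (finite support in $U$, finite atoms, $l^x_t(\omega)<\infty$), so $A^\tau_t(\omega)=\infty$ forces $\tau^D(\omega)\le t$; the exit point $x^\ast:=X_{\tau^D(\omega)}(\omega)$ lies in $\partial D\subset U\setminus D$ and therefore satisfies $\lambda^D(\{x^\ast\})=\infty$. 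Off the single nullset $N_1$ on which the regularity statement $l^y_{\eta_y+\varepsilon}>0$ for all $\varepsilon>0$ fails at the finitely many relevant $y\in U$, one has $l^{x^\ast}_{t+\varepsilon}>0$; the dichotomy above then gives either $A^{\tau_n}_{t+\varepsilon}=\infty$ directly (when $x^\ast\notin D_n$) or $A^{\tau_n}_{t+\varepsilon}\ge l^{x^\ast}_{t+\varepsilon}\,\lambda_n(\{x^\ast\})\to\infty$.

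Part (iii) is the delicate step. With (i) and (ii) in hand, $A^\tau$ is almost surely a right-continuous, nondecreasing process with a single possible jump, at $\tau^D$ to $\infty$. First hit times of such functions are continuous under uniform convergence \emph{off their plateau set}: the random set $P(\omega)$ of levels $c$ at which $A^\tau(\omega)$ is flat on a non-degenerate interval is at most countable, and the absolute continuity of $E$ together with its independence from $X$ gives $\PR_x(E\in P(X))=0$ by conditioning and Fubini. On the complementary event, for every $\eta>0$ one has $A^\tau_{\tau-\eta}<E$, and either $\tau+\eta<\tau^D$ with $A^\tau_{\tau+\eta}>E$ (continuous crossing) or $\tau=\tau^D$ (overshoot, where part (ii) takes over at time $\tau^D$ with increment $\eta$). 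Either branch, combined with (i), sandwiches $\tau_n$ between $\tau-\eta$ and $\tau+\eta$ for large $n$, giving $\tau_n\to\tau$ $\PR_x$-a.s. Part (iv) is then immediate on $\{\tau<\infty\}$ by continuity of $X$, with the case $\tau=\infty$ handled by the usual convention at infinity.

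The principal obstacle is precisely the plateau issue in (iii): $A^\tau$ may be genuinely flat on a non-degenerate interval beginning at $\tau$, so uniform convergence of $A^{\tau_n}$ alone cannot pin $\tau_n$ down, and it is only the independence and absolute continuity of $E$ that rule the degeneracy out almost surely. A secondary technical point is the uniform-in-$\varepsilon$ positivity of $l^{x^\ast}_{\tau^D+\varepsilon}$ used in (ii); this rests on joint continuity of $(y,t)\mapsto l^y_t$ together with the strong Markov property at $\tau^D$, which is what allows a single nullset $N_1$ to work for all $\varepsilon$, $t$, and all finitely many candidates $x^\ast\in U\setminus D$ simultaneously.
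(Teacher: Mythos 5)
Your parts (i), (ii) and (iv) run essentially parallel to the paper's proof: $\mathcal M$-convergence forces $D\subset D_n$ eventually, with convergence of the finite rates on $D\cap U$ and divergence to $\infty$ at $U\setminus D$; on $\{A^\tau_T<\infty\}$ the functionals reduce to a finite sum over $D\cap U$; and in (ii) the exit point $X_{\tau^D}\in\partial D\subset U\setminus D$ carries infinite limiting rate while its local time is strictly positive off a single nullset built from finitely many points and rational times. The genuine difference is in (iii), precisely at the step you identify as delicate. The paper does \emph{not} argue via plateau levels: on $\{E\le A^\tau_\tau<\infty\}$ it first shows $X_\tau\in\supp(\iota_1(\tau))$ $\PR_x$-a.s.\ (since $dA^\tau$ only charges times at which $X$ sits in $\supp(\iota_1(\tau))$), and then uses the strong Markov property at $\tau$ together with $\PR_z(l^z_\eps>0)=1$ to get $A^{\tau_n}_{\tau+\eps}\to A^\tau_{\tau+\eps}>A^\tau_\tau\ge E$, i.e.\ the additive functional strictly increases immediately after $\tau$ along a.e.\ path. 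Your alternative --- the plateau levels of the nondecreasing path $A^\tau(\omega)$ form a countable set $P(\omega)$, and $\PR_x(E\in P(X))=0$ by independence, atomlessness of the $\Exp(1)$ law and Fubini --- reaches the same conclusion ($A^\tau_{\tau+\eta}>E$ for every $\eta>0$ on the continuous-crossing branch) and is arguably more elementary, as it bypasses the strong Markov step and the identification of $X_\tau$ as a support point of the rate measure; the only thing to supply is the measurability of $\{E\in P(X)\}$, e.g.\ via $\bigcup_{q<q'\in\mathbb{Q}}\{A^\tau_q=A^\tau_{q'}=E<\infty\}$. Two cosmetic loose ends: the intermediate case $\tau<\tau^{D}\le\tau+\eta$ is missing from your dichotomy but is handled exactly like your $\tau=\tau^{D}$ branch by invoking (ii) at $t=\tau^{D}$, and the application of (i) at the random times $\tau\pm\eta$ should be routed through a countable dense set of deterministic horizons $T$. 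The paper's route has the side benefit of establishing $X_\tau\in\supp(\iota_1(\tau))$, a structural fact it exploits again in the later compactness arguments, but for the present lemma both proofs are complete.
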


\begin{proof}
    \eqref{mcont1} By choosing $N\in \N$ such that $\vert \iota(\tau_n)-\iota(\tau)\vert < 1 - \max\{ \iota(\tau)(x):\iota(\tau)(x)<1\}$ for all $n \ge N$ we get
    \begin{align*}
        \tau_n^\infty:=\{x \in U: \iota_1(\tau_n)(\{x\})= \infty \} \subset \tau^\infty:=\{x \in U: \iota_1(\tau)(\{x\})= \infty \}
    \end{align*}for all $n \ge N$.

    Let $T\in [0,\infty), t \in [0,T]$ and $\omega \in \{A^{\tau}_T<\infty\}$. Now
    \begin{align*}
        \infty> A^{\tau}_T(\omega) 
        = \int l^y_T(\omega) \iota_1(\tau)(dy) + \infty\inda{[\eta^{\tau^\infty}(\omega),\infty)}(T)
        = \sum_{y\in U\setminus \tau^{\infty}} l^y_T(\omega) \iota_1(\tau)(\{z\})+ \infty\inda{[\eta^{\tau^\infty}(\omega),\infty)}(T)
    \end{align*} We read off
    \begin{align*}
        0 \le l^y_t(\omega) \le l^y_T(\omega) = 0 \quad \forall y \in \tau^\infty,\quad  \eta^{\tau^\infty}(\omega) > T.
    \end{align*} For $n \ge N$ this yields
    \begin{align*}
        &\vert A^{\tau_n}_t(\omega) - A^\tau_t(\omega) \vert \\
        =&\left\vert \int_{U\setminus \tau^{\infty}_n} l^y_t(\omega)  \iota_1(\tau_n)(dy) + \infty\inda{[\eta^{\tau^\infty}(\omega),\infty)}(t) -\int_{U\setminus \tau^{\infty}} l^y_t(\omega)  \iota_1(\tau)(dy) - \infty\inda{[\eta^{\tau^\infty}(\omega),\infty)}(t) \right\vert\\
        \le& \sum_{z \in U \setminus \tau^\infty} l^y_t(\omega) \vert \iota_1(\tau_n)(\{z\}) -\iota_1(\tau)(\{z\}) \vert 
        \le \sum_{z \in U \setminus \tau^\infty} l^y_T(\omega) \vert \iota_1(\tau_n)(\{z\}) -\iota_1(\tau)(\{z\}) \vert \stackrel{n \to \infty}{\longrightarrow} 0.
    \end{align*}
    
    \eqref{mcont2} Let $t\ge 0$ and $\omega \in \{A^\tau_t= \infty\}$. We have 
    \begin{align*}
        A^{\tau_n}_{t+\eps}(\omega) \ge \int_{U} l^y_{t+\eps}(\omega) \iota_1(\tau_n)(dy)+ \infty\inda{[\eta^{\tau^\infty_n}(\omega),\infty)}(t+\eps),
    \end{align*} while
    \begin{align*}
        \infty=A^\tau_t(\omega) = \int_{U \setminus \tau^\infty} l^y_t(\omega) \iota_1(\tau)(dy) + \infty\inda{[\eta^{\tau^\infty}(\omega),\infty)}(t+\eps) 
    \end{align*} i.e.\ $\eta^{\tau^\infty}(\omega) \le t$. Thus $\eta^{\tau^\infty}(\omega) < t +\eps$. Choose a $\PR_x$-nullset $N_1$ such that $l^z_{t+\eps}(\omega)>0$ for all $\omega \in \Omega \setminus N_1$ and some $z\in \tau^\infty$. Note that $N_1$ can be chosen independently from $t,\eps$ and $z$ since $\tau^\infty$ is finite. The monotonicity of $l^y_{\bigcdot}$ allows to consider only rational $t,\eps$ and then collect all the corresponding exception nullsets. Applying $\iota_1(\tau)(\{z\})=\infty$, this leads to
    \begin{align*}
        A^{\tau_n}_{t+\eps}(\omega) 
        \ge \int_U l^z_{t+\eps}(\omega) \iota_1(\tau_n)(dy) 
        \ge l^z_{t+\eps}(\omega) \iota_1(\tau_n)(\{z\})
        \stackrel{n \to \infty}{\longrightarrow} l^z_{t+\eps}(\omega) \iota_1(\tau)(\{z\}) =\infty
    \end{align*} for all $\omega\in \{A^\tau_t= \infty\}\setminus N_1$.
    
    \eqref{mcont3} Let $\eps>0$. We start by showing $\liminf_{n \to \infty} \tau_n \ge \tau-\eps$. For that let $t\le \tau-\eps$. By definition $A^{\tau_n}_t < E$. By \eqref{mcont1}
    \begin{align*}
        A^{\tau_n}_t \le A^{\tau_n}_{t-\eps} \stackrel{n \to \infty}{\longrightarrow} A^{\tau}_{t-\eps} < E
    \end{align*} and thus $\tau_n >t$ for sufficiently large $n\in \N$. Since $t$ was arbitrary, this finishes the first part. Next, we show the inequality $\limsup_{n \to \infty} \tau_n \le \tau+\eps$. On $\{ A^\tau_{\tau} = \infty\}$, by \eqref{mcont2}, we get $A^{\tau_n}_{\tau+\eps} \stackrel{n \to \infty}{\longrightarrow} \infty \ge E$ $\PR_x$-a.s.\ and thus $\tau_n \le \tau+ \eps$ $\PR_x$-a.s.\ for sufficiently large $n$. On $\{ E \le A^{\tau}_{\tau} < \infty\}$ we have $\eta^{\tau^\infty}> \tau$. Since $\int_0^\infty \ind{X_s \neq y} l^y_{\bigcdot}(ds) = 0$  $\PR_x$-a.s., we infer
    \begin{align*}
        \int^\tau_0 \ind{X_s \not \in \supp(\iota_1(\tau))} dA^\tau_s
        = \sum_{z \in \supp(\iota_1(\tau))} \int^\tau_0 \ind{X_s \not \in \supp(\iota_1(\tau))} l^z_{\bigcdot}(ds) \cdot \iota_1(\tau)(\{z\})
    \end{align*} $\PR_x$-a.s.\ on $\{ E \le A^{\tau}_{\tau} < \infty\}$. This, however, implies $X_\tau \in \supp(\iota_1(\tau))$ $\PR_x$-a.s.\ on $\{ E \le A^{\tau}_{\tau} < \infty\}$. Putting these thing together we get
    \begin{align}
        &A^{\tau_n}_{\tau+\eps} 
        = \theta_\tau \circ A^{\tau_n}_\eps + A^{\tau_n}_\tau 
        = \int \theta_\tau \circ l^z_\eps \iota_1(\tau_n)(dz) + A^{\tau_n}_\tau  \notag\\
        =& \sum_{z \in \supp(\iota_1(\tau_n))} \theta_\tau \circ l^z_\eps \iota_1(\tau_n)(dz)+ A^{\tau_n}_\tau
        \stackrel{n \to \infty}{\longrightarrow} \sum_{z \in \supp(\iota_1(\tau))} \theta_\tau \circ l^z_\eps \iota_1(\tau)(dz)+ A^{\tau}_\tau \label{limp}
    \end{align} $\PR_x$-a.s.\ on $\{ E \le A^{\tau}_{\tau} < \infty\}$. Using $\PR_z(l^z_\eps>0)=1$ for all $z \in I$ and $\PR_x(X_\tau \in \supp(\iota_1(\tau)) \vert E \le A^{\tau}_{\tau} < \infty)=1$ one sees that
    \begin{align*}
        &\PR_x \left(\sum_{z \in \supp(\iota_1(\tau))} \theta_\tau \circ l^z_\eps \iota_1(\tau)(dz)>0 \bigg\vert E \le A^{\tau}_{\tau} < \infty\right)\\
        =& \Ex_x\left[  \Ex_{x} \left[\ind{\sum_{z \in \supp(\iota_1(\tau))} \theta_\tau \circ l^z_\eps \iota_1(\tau)(dz)>0} \big\vert \f_\tau \right] \bigg\vert E \le A^{\tau}_{\tau} < \infty\right]\\
        =& \Ex_x\left[  \Ex_{X_\tau} \left[\ind{\sum_{z \in \supp(\iota_1(\tau))} l^z_\eps \iota_1(\tau)(dz)>0}  \right]\bigg\vert E \le A^{\tau}_{\tau} < \infty \right]
        =1.
    \end{align*} Combined with \eqref{limp} we get
    \begin{align*}
        A^{\tau_n}_{\tau + \eps} > A^\tau_\tau \ge E
    \end{align*} for $\PR_x$-a.a.\ $\omega \in \{E \le A^{\tau}_{\tau} < \infty\}$ and sufficiently large $n$. This implies $\tau_n \le \tau +\eps$ $\PR_x$-a.s.\ on $\{E \le A^{\tau}_{\tau} < \infty\}$. Sending $\eps \searrow 0$ yields the claim. 

    \eqref{mcont4} is immediate by \eqref{mcont3} and continuity of $X$ paths.
\end{proof}

\subsection{$\mathcal{M}^1_x$-topology}
The second topology is based on the idea that on $\mathcal Z$ we should choose a notion of convergence such that convergence of the stopping times just corresponds to the convergence of the distributions of the stopped process generated by it. As a first step, we start with the following definition.

\begin{definition}($\mathcal{M}^1_x$-topology)\\
    Let $x\in I$ and $U\subset I$ open. Further, we equip the space $\mathcal{M}^1(([0,\infty]\times \overline{U})^2)$ with the Prokhorov metric. Let $\kappa_x: \mathcal{Z}^U(E_1)\times \mathcal{Z}^U(E_2) \to \mathcal{M}^1(([0,\infty]\times \overline{U})^2)$ be given by 
    \begin{align*}
        (\tau^{D_1,\lambda_1}, \tau^{D_2,\lambda_2}) \mapsto \PR_x^{(\tau^{D_1,\lambda_1},X_{\tau^{D_1,\lambda_1}},\tau^{D_2,\lambda_2},X_{\tau^{D_2,\lambda_2}})}.
    \end{align*} We define the \textit{$\mathcal{M}^1_x$-topology} as the pullback topology on $\mathcal{Z}^U(E_1)\times \mathcal{Z}^U(E_2)$ under the mapping $\kappa_x$.
\end{definition}

The rest of the subsection will essentially be about showing the compactness properties of this space. The result is the following:

\begin{proposition}\label{comp}Let $U \subset I$ be open with compact closure $\overline{U}\subset I$ (closure taken in $\R$) and $x\in U$. Then, $\mathcal{Z}^U(E_1)\times \mathcal{Z}^U(E_2)$ equipped with the $\mathcal{M}^1_x$-topology is a compact topological space, i.e., the set $\kappa_x(\mathcal{Z}^U(E_1)\times \mathcal{Z}^U(E_2)) \subset \mathcal{M}^1(([0,\infty]\times \overline{U})^2)$ is weakly compact.    
\end{proposition}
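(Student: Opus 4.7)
Since $\overline U$ is a compact subset of $\mathbb R$, the product $[0,\infty]\times \overline U$ is compact, so by Prokhorov's theorem the ambient space $\mathcal M^1(([0,\infty]\times \overline U)^2)$ equipped with the Prokhorov metric is itself a compact metrizable topological space. Under the pullback topology via $\kappa_x$, the space $\mathcal Z^U(E_1)\times \mathcal Z^U(E_2)$ is compact if and only if the image $\kappa_x(\mathcal Z^U(E_1)\times \mathcal Z^U(E_2))$ is closed in $\mathcal M^1(([0,\infty]\times \overline U)^2)$ (open covers of the domain correspond bijectively to open covers of the image). The task therefore reduces to establishing this closedness.

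Fix a sequence $\bigl(\tau^{D_1^n,\lambda_1^n}, \tau^{D_2^n,\lambda_2^n}\bigr)_n$ whose $\kappa_x$-images converge weakly to some $\mu$. The crucial structural fact I would exploit is that $E_1, E_2, X$ are mutually independent, so that $\tau^{D_1^n,\lambda_1^n}$ and $\tau^{D_2^n,\lambda_2^n}$ are conditionally independent given the path $X$ and their $X$-conditional laws are explicitly
\begin{align*}
\PR\bigl(\tau^{D_i^n,\lambda_i^n} > t \,\big|\, X\bigr) = \exp\bigl(-A^{\tau^{D_i^n,\lambda_i^n}}_t\bigr).
\end{align*}
Consequently it suffices to produce, for each $i\in\{1,2\}$ and along a common subsequence, a pair $(D_i,\lambda_i)$ with $D_i\subset U$ open and $\lambda_i\in\rm(D_i)$ such that $A^{\tau^{D_i^n,\lambda_i^n}}_t \to A^{\tau^{D_i,\lambda_i}}_t$ $\PR_x$-almost surely: the joint law of the limiting quadruple then coincides with $\mu$ by conditional independence and dominated convergence.

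The candidates are extracted by standard compactness tools. The closed sets $F_i^n := \overline U \setminus D_i^n$ admit a subsequential Fell (Painlevé--Kuratowski) limit $F_i\subset \overline U$ because the hyperspace of closed subsets of the compact set $\overline U$ is compact, yielding $D_i := U \setminus F_i$. For the measures $\lambda_i^n$, which are only locally finite, I would exploit the a priori bound $\int l^y_{\tau^{D_i^n,\lambda_i^n}}\,\lambda_i^n(dy) \le E_i$ built into the very definition of $\tau^{D_i^n,\lambda_i^n}$: on any compact $K$ contained in $D_i$, strict positivity of the local times $l^y_t$ once $X$ has visited $K$ by time $t$ forces uniform bounds on $\lambda_i^n(K)$, whence a vague subsequential limit $\lambda_i$ on $D_i$.

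The main obstacle, and the technical heart of the argument, is identifying the limit of the additive functionals. One has to show $A^{\tau^{D_i^n,\lambda_i^n}}_t \to A^{\tau^{D_i,\lambda_i}}_t$ $\PR_x$-a.s.\ in the presence of two delicate phenomena: (i) mass of $\lambda_i^n$ escaping towards $\partial D_i^n$ has to be reabsorbed either as an atom of $\lambda_i$ (if that boundary point remains in $D_i$) or as part of the killing term $+\infty\cdot\mathds{1}_{\{\tau^{D_i}\le t\}}$ (if it is absorbed into $F_i$); and (ii) Dirac-type concentration of $\lambda_i^n$ at interior points must be captured by atoms of the vague limit $\lambda_i$. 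The way through is to argue on the occupation-density level, decomposing $A^{D,\lambda}_t = \int_D l^y_t\,\lambda(dy) + \infty\cdot\mathds{1}_{\{\tau^D\le t\}}$ and treating the two terms separately using joint continuity of $(t,y)\mapsto l^y_t$ and upper semicontinuity of first exit times under Fell convergence of the defining sets, in essentially the same spirit as parts \eqref{mcont1}--\eqref{mcont3} of Lemma~\ref{mcont} but adapted to the continuous, non-discrete setting.
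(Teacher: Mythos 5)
Your overall architecture agrees with the paper's: Prokhorov gives relative compactness, the problem reduces to closedness of the image, and closedness is proved by extracting a candidate pair $(D_i,\lambda_i)$ and showing a.s.\ (or conditional-law) convergence of the additive functionals. The reduction via the conditional survival function $\exp(-A_t)$ is a legitimate alternative to the paper's route (a.s.\ convergence of the stopping times themselves plus uniqueness of weak limits). However, there is a genuine gap in your candidate construction, and it sits exactly at the point you flag as the "technical heart."

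Taking $D_i$ to be $U$ minus a Fell limit of the complements $F_i^n=\overline U\setminus D_i^n$ cannot work: consider $D_i^n=U=(0,1)$, $\lambda_i^n=n\,\delta_{1/2}$, $x=1/4$. Then $F_i^n\equiv\{0,1\}$, so your $D_i=(0,1)$, yet the limiting behaviour is "stop at the first hitting time of $1/2$", whose only representation in $\mathcal Z$ requires $1/2\notin D_i$ (here $D_i=(0,1/2)$, $\lambda_i=0$). Your stated resolution of obstacle (ii) -- that Dirac-type concentration "must be captured by atoms of the vague limit $\lambda_i$" -- fails precisely when the concentrating mass diverges: no finite atom reproduces sure stopping at a first hitting time, and no vague limit exists on a neighbourhood of $1/2$. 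For the same reason your claimed uniform bound on $\lambda_i^n(K)$ for compact $K\subset D_i$ is false for your $D_i$; such a bound cannot come from the pathwise relation $A_{\tau}\approx E_i$ alone (which is a random identity, not a deterministic bound on $\lambda_i^n$), but must be derived from the assumed weak convergence of the stopped laws. This is how the paper proceeds: the candidate continuation set around $x$ is the \emph{interval} $(L^i,R^i)$ read off from the support of the limit marginal $\mu_{2i}$ of $X_{\tau^i_n}$ (Lemma \ref{aux1}), interior blow-up points of $\lambda^i_n$ are thereby automatically pushed into the complement (Lemma \ref{aux2} and \eqref{LRgl}), and the uniform mass bound on compacts of $(L^i,R^i)$ is proved by contradiction against the definition of $L^i,R^i$ (Lemma \ref{aux3}). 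Note also that only the connected component of $x$ in $D_i$ is visible to $\kappa_x$, so constructing the full set $D_i$ is neither necessary nor, via set limits of the $D_i^n$ alone, possible.
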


    Since $([0,\infty]\times \overline{U})^2$ is compact, $\kappa_x(\mathcal{Z}^U(E_1)\times \mathcal{Z}^U(E_2))$ is relatively weakly compact by Prokhorov's theorem.
As usual the closedness will be much more challenging. We will divide the proof into several lemmas. We begin by introducing some notation and explaining the general approach.

$\mathcal{M}^1(([0,\infty]\times \overline{U})^2)$ is metric, so it suffices to show sequential closedness in order to complete the compactness proof. For this, let $((\tau^{D^1_n,\lambda^1_n},\tau^{D^2_n,\lambda^2_n}))_{n\in \N} \in (\mathcal{Z}^U(E_1)\times \mathcal{Z}^U(E_2))^{\N}$ be a sequence of tuples of Markovian randomized times such that $\kappa_x(\tau^{D^1_n,\lambda^1_n},\tau^{D^2_n,\lambda^2_n})$ converges to a probability measure $\mu \in \mathcal{M}^1(([0,\infty]\times \overline{U})^2) $. We denote the marginals of $\mu$ by $\mu_1,\mu_2,\mu_3,\mu_4$. Closedness now means that the limiting distribution $\mu$ of the sequence is generated by Markovian randomized times. That is, we have to find $(\tau^1,\tau^2) :=(\tau^{D^1,\lambda^1},\tau^{D^2,\lambda^2})\in\mathcal{Z}^U(E_1)\times \mathcal{Z}^U(E_2)$ such that $\kappa_x(\tau^1,\tau^2) = \mu$. To abbreviate we set $(\tau^1_n,\tau^2_n):=(\tau^{D^1_n,\lambda^1_n},\tau^{D^2_n,\lambda^2_n})$ for each $n\in\N$. All these objects and notations are fixed throughout this section. In addition, all the notions introduced in the Lemmas \ref{aux1}, \ref{aux2}, \ref{aux3}, and \ref{aux4} are fixed for the entire section as well.
         
In Lemma \ref{aux1} we construct the candidate for the continuation regions $D_1:=(L^1,R^1)$, $D_2:=(L^2, R^2)$ of $\tau^1,\tau^2$ around $x$ via the supports of the measures $\mu_2,\mu_4$ and derive the order of $L^i,R^i, i\in\{1,2\}$ and the limits of the supports of the measures $\PR_x^{X_{\tau^1_n}},\PR_x^{X_{\tau^2_n}}$. In Lemma \ref{aux2} we show a consistency condition that justifies to construct the continuation regions of $\tau^1,\tau^2$ as the supports of the measures $\mu_2,\mu_4$. In Lemma \ref{aux3} we construct $\lambda^i$, $i\in\{1,2\}$ as the vague limit of a sequence of measures related to the sequence $(\lambda^i_n)_{n\in\N}$. The main issue to overcome is that the measures $\lambda^i_n \in \rm((L^i_n,R^i_n))$ may have different domains. Here we use the ordering \eqref{consistency1} to find a common domain. In Lemma \ref{aux4} we will establish $\tau^i_n\to \tau^i$ $\PR_x$-a.s.\ along suitably chosen subsequences. This is obtained by applying Lemma \ref{aux2} and identifying local times stopped at certain (first entry) times for fixed $\omega$ as test functions for vague convergence to obtain suitable convergence of the additive functionals defining the stopping times $\tau^1,\tau^2$. Based on this, we show the remaining closedness statement of the proof of Proposition \ref{comp}.

\begin{lemma}\label{aux1}
    Set 
    \begin{align*}
        &L^1:=\sup \left\{y \in I\cap(-\infty,x]:  \mu_2(I \cap (-\infty, y))=0 \right\},\\
        &R^1:=\inf \left\{y \in I\cap[x,\infty):  \mu_2(I \cap (y,\infty))=0 \right\},\\
        &L^2:=\sup \left\{y \in I\cap(-\infty,x]:  \mu_4(I \cap (-\infty, y))=0 \right\},\\
        &R^2:=\inf \left\{y \in I\cap[x,\infty):  \mu_4(I \cap (y,\infty))=0 \right\}
    \end{align*} using the conventions $\sup \varnothing:= \inf I $, $\inf \varnothing:=\sup I$. With that, we define $D^1:=(L^1,R^1)$, $D^2:=(L^2, R^2)$. Note that this includes the option $D^i=(x,x)=\varnothing$ for $i\in\{1,2\}$.
    Let
    \begin{align*}
        &L^1_n:= \sup (I \setminus D^1_n) \cap (-\infty,x],
        && R^1_n:= \inf (I \setminus D^1_n) \cap [x,\infty),\\
        &L^2_n:= \sup (I \setminus D^2_n) \cap (-\infty,x],
        && R^2_n:= \inf (I \setminus D^2_n) \cap [x,\infty),
    \end{align*} and for $i\in\{1,2\}$ set
    \begin{align}
        \Tilde{L}^i:=\limsup_{n\to \infty} L^i_n, \quad \Tilde{R}^i:=\liminf_{n\to \infty} R^i_n. \label{lr}
    \end{align} Then, for $i\in\{1,2\}$ we have
    \begin{align}
        -\infty <\inf U \le  \Tilde{L}^i\le L^i \le x\le R^i \le\Tilde{R}^i  \le \sup U < \infty. \label{consistency1}
    \end{align}
\end{lemma}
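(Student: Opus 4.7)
Proof plan for Lemma~\ref{aux1}:

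The plan is to handle each inequality in \eqref{consistency1} separately. The outer bounds $\inf U>-\infty$ and $\sup U<\infty$ are immediate from the compactness of $\overline{U}\subset I$, and both lie in $\overline{U}\subset I$. For the middle inequalities $L^i\le x\le R^i$, I would argue directly from the definition: since $x$ belongs to both $I\cap(-\infty,x]$ and $I\cap[x,\infty)$, one has $\mu_2((-\infty,x))<\infty$ trivially, and the supremum/infimum definitions of $L^i,R^i$ (with the conventions $\sup\varnothing:=\inf I$, $\inf\varnothing:=\sup I$) immediately give $L^i\le x\le R^i$.

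The inequalities $\tilde{L}^i\le x\le\tilde{R}^i$ and $\inf U\le\tilde{L}^i$, $\tilde{R}^i\le\sup U$ follow from properties of the sequences $(L^i_n),(R^i_n)$. First, $L^i_n\le x\le R^i_n$ for every $n$ by construction, whence $\limsup L^i_n\le x$ and $\liminf R^i_n\ge x$. Second, because $\tau^i_n\in\mathcal{Z}^U(E_i)$ forces $D^i_n\subset U$, the component of $D^i_n$ containing (or adjacent to) $x$ cannot extend past $\inf U$ or $\sup U$; a short case distinction on whether $x\in D^i_n$ shows $L^i_n\ge\inf U$ and $R^i_n\le\sup U$ for all $n$, giving $\tilde L^i\ge\inf U$ and $\tilde R^i\le\sup U$.

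The main step — and the one I expect to be the crux of the lemma — is $\tilde{L}^i\le L^i$ (and, symmetrically, $R^i\le\tilde{R}^i$). I would prove it using the weak convergence $\PR_x^{X_{\tau^i_n}}\to \mu_{2i}$ (the relevant marginal) together with the observation that $X_{\tau^i_n}\in[L^i_n,R^i_n]$ $\PR_x$-a.s., since the stopping time $\tau^{D^i_n,\lambda^i_n}$ either stops on exit from $D^i_n=(L^i_n,R^i_n)$ or at an interior point via the local-time functional, all of which lie in $[L^i_n,R^i_n]$. Arguing by contradiction, suppose $L^i<\tilde L^i$ and pick any $y\in(L^i,\tilde L^i)$. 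By definition of $L^i$ as a supremum, $\mu_{2i}((-\infty,y))>0$. On the other hand, extracting a subsequence $(n_k)$ with $L^i_{n_k}\to\tilde L^i>y$ yields $L^i_{n_k}>y$ eventually, so $\PR_x(X_{\tau^i_{n_k}}<y)=0$ for all large $k$, and hence $\liminf_n\PR_x(X_{\tau^i_n}<y)=0$. The Portmanteau theorem applied to the open set $(-\infty,y)$ gives
\[
\mu_{2i}((-\infty,y))\le\liminf_{n\to\infty}\PR_x^{X_{\tau^i_n}}((-\infty,y))=0,
\]
a contradiction. The inequality $R^i\le\tilde R^i$ is proved by the same weak-convergence/Portmanteau argument applied to open half-lines of the form $(y,\infty)$. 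Assembling these pieces yields \eqref{consistency1}.
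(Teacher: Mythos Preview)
Your proposal is correct and follows essentially the same approach as the paper: both arguments use that $X_{\tau^i_n}\in[L^i_n,R^i_n]$ $\PR_x$-a.s., apply the Portmanteau inequality for open half-lines to the weak limit $\mu_{2i}$, and then read off $\tilde L^i\le L^i$, $R^i\le\tilde R^i$ from the defining sup/inf. The only cosmetic differences are that the paper argues directly (showing $\mu_{2i}(I\cap(-\infty,\tilde L^i))=0$ via an $\eps$-argument) rather than by contradiction, and that your identification $D^i_n=(L^i_n,R^i_n)$ should really read ``the connected component of $D^i_n$ around $x$ is $(L^i_n,R^i_n)$''---but this does not affect the argument since the process starts at $x$.
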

    
\begin{proof}
    Note that $I \setminus D^i_n\supset \partial U$ for each $n$ and $(-\infty,x]\cap \partial U \neq \varnothing \neq [x,\infty) \cap \partial U$, so $\inf U \le \Tilde{L}^i$ and $\Tilde{R}^i \le \sup U$. $-\infty <\inf U$ and $\sup U < \infty$ are a consequence of compactness of $\overline{U}$. By choice of $L^i_n, R^i_n$, we have
    \begin{align*}
        \PR_x^{X_{\tau^i_n}} ( I\cap(  (-\infty,L^i_n)  \cup  (R^i_n,\infty))) =0
    \end{align*} for all $n\in \N$. Since for all $\eps > 0$ and all $N\in \N$, there is some $n\ge N$ such that $L^i_n > \Tilde{L}^i-\eps$ or $R^i_n< \Tilde{R}^i+\eps$, respectively, this implies
    \begin{align*}
        \liminf_{n \to \infty}\PR_x^{X_{\tau^i_{n}}}(I \cap (-\infty, \Tilde{L}^i-\eps))= 0 \quad \text{and}\quad
        \liminf_{n \to \infty}\PR_x^{X_{\tau^i_{n}}}(I \cap (\Tilde{R}^i+\eps, \infty))=0.
    \end{align*} By the Portmanteau theorem we get
    \begin{align}
        &\mu_{2i}(I \cap( (-\infty,\Tilde{L}^i)\cup(\Tilde{R}^i,\infty))) =\mu_{2i}(  I \cap((-\infty,\Tilde{L}^i))) +\mu_{2i}(I \cap( (\Tilde{R}^i,\infty))) \notag \\
        =& \lim_{\eps \searrow 0} \big(\mu_{2i}(I \cap((-\infty,\Tilde{L}^i-\eps))) +\mu_{2i}( I \cap((\Tilde{R}^i +\eps,\infty)))\big) \notag\\
        \le& \lim_{\eps \searrow 0}\big( \liminf_{n \to \infty} \PR_x^{ X_{\tau_{n}}}( I \cap((-\infty,\Tilde{L}^i-\eps))) +\liminf_{n \to \infty}\PR_x^{X_{\tau_{n}}} ( I \cap( ( \Tilde{R}^i+\eps,\infty)))\big)= 0. \notag
    \end{align} With the definition of $L^i,R^i$, we obtain $\Tilde{L}^i\le L^i$ and $R^i\le \Tilde{R}^i$. $L^i\le x\le R^i$ is trivial.
    \end{proof}
    
    \begin{lemma}\label{aux2}
        Let $i \in \{1,2\}$ and $\Tilde{L}^i, \Tilde{R}^i$ as in \eqref{lr}.
    \begin{enumerate}[(i)]
        \item \label{aux2_1}If $L^i=x=R^i$ then $\liminf_{n\to \infty} \lambda^i_n((x-\eps,x+\eps)\cap D^i_n)=\infty$ for all $\eps>0$ or $\Tilde{L}^i=L^i$ or $\Tilde{R}^i = R^i$.
        \item \label{aux2_2}If $L^i=x<R^i$ then $\liminf_{n\to \infty} \lambda^i_n((x-\eps,x+\eps)\cap D^i_n)=\infty$ for all $\eps>0$ or $\Tilde{L}^i=L^i$.
        \item \label{aux2_3}If $L^i<x= R^i$ then $\liminf_{n\to \infty} \lambda^i_n((x-\eps,x+\eps)\cap D^i_n) = \infty$ for all $\eps>0$ or $\Tilde{R}^i = R^i$.
        \item \label{aux2_4}If $L^i<x< R^i$ then $\liminf_{n\to \infty} \lambda^i_n((L^i-\eps,L^i+\eps)\cap D^i_n)=\infty$ for all $\eps>0$ or $\Tilde{L}^i=L^i$ as well as $\liminf_{n\to \infty} \lambda^i_n((R^i-\eps,R^i+\eps)\cap D^i_n) = \infty$ for all $\eps>0$ or $\Tilde{R}^i = R^i$. 
    \end{enumerate}  
\end{lemma}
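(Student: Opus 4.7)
The four statements are parallel consistency conditions linking the non-convergence of an endpoint $L^i_n$ or $R^i_n$ of the $n$-th continuation interval to the blow-up of the stopping rate $\lambda^i_n$ near the corresponding candidate $L^i$ or $R^i$. Cases (i)--(iii) are degenerate subcases of (iv) in which one or both of $L^i, R^i$ coincide with $x$, and the statements for the left and right boundaries are perfectly symmetric, so it suffices to prove the left-boundary claim in (iv). The plan is to argue by contradiction: assume $\Tilde{L}^i < L^i$ and that there exist $\eps_0 > 0$ and $M < \infty$ with $\lambda^i_n\bigl((L^i - \eps_0, L^i + \eps_0) \cap D^i_n\bigr) \le M$ along a subsequence (relabeled as $n$). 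After passing to this subsequence, fix $\delta \in (0, \eps_0/2)$ with $L^i_n \le L^i - \delta$ for all such $n$. Since $\Tilde{R}^i \ge R^i > L^i$, we have $R^i_n \ge L^i + \eps_0$ for large $n$, so the controlled strip $(L^i - \eps_0, L^i + \eps_0)$ lies entirely in $D^i_n$.

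The goal is to construct an event $G_n \subset \Omega$ with $\PR_x(G_n) \ge c > 0$ uniformly in $n$, on which $X_{\tau^i_n} = L^i_n \le L^i - \delta$. The Portmanteau theorem applied to the closed set $K := I \cap (-\infty, L^i - \delta]$ then yields $\mu_2(K) \ge \limsup_n \PR_x^{X_{\tau^i_n}}(K) \ge c$, contradicting $\mu_2(K) = 0$, which follows from the definition of $L^i$ since $L^i - \delta < L^i$. The event $G_n$ is the intersection of (a) an $X$-trajectory exiting $D^i_n$ at the lower boundary $L^i_n$ with $A^{\tau^i_n}_{\tau^{D^i_n}} = \int l^y_{\tau^{D^i_n}} \lambda^i_n(dy) \le M'$ for some fixed $M' < \infty$, and (b) the exponential clock $E_i$ not firing before $\tau^{D^i_n}$. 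Conditioning on $\f^X_\infty$, the identity $\PR(E_i > A^{\tau^i_n}_{\tau^{D^i_n}} \mid \f^X_\infty) = \exp(-A^{\tau^i_n}_{\tau^{D^i_n}})$ gives conditional probability at least $e^{-M'}$ for (b) on (a), so that $\PR_x(G_n) \ge e^{-M'}\, \PR_x(\text{(a)})$.

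The main obstacle is establishing $\PR_x(\text{(a)}) \ge p > 0$ uniformly in $n$, which is a purely diffusion-theoretic claim. The approach is to constrain the $X$-path to a geometrically controlled excursion from $x$ down to $L^i_n$: first the process reaches $y_1 := L^i - \delta/2$ (through an initial leg bringing it into the strip $(L^i - \eps_0, L^i + \eps_0)$ if $x$ lies above it), and then exits the interval $(L^i_n, L^i - \eps_0/4)$ at its lower boundary $L^i_n$. Uniform lower bounds on the probability of each leg come from the scale function and exit probabilities of $X$, together with the compactness of $\overline U$. The path is designed so that the accumulation of local time against $\lambda^i_n$ occurs entirely inside the strip $(L^i - \eps_0, L^i + \eps_0)$, where $\lambda^i_n$ has mass at most $M$; standard $L^1$-estimates on local times over bounded excursions (via the speed measure / Green function of $X$), combined with Markov's inequality, then upgrade this into the uniform bound $A^{\tau^i_n}_{\tau^{D^i_n}} \le M'$ on a sub-event of uniformly positive probability, closing the argument.
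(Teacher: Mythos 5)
Your overall strategy is the same as the paper's: argue by contradiction, build an event of uniformly positive probability on which the clock does not fire and $X$ exits at $L^i_n\le L^i-\delta$, and then contradict $\mu_{2i}\bigl(I\cap(-\infty,L^i-\delta]\bigr)=0$ via Portmanteau. However, there is a genuine gap in the one case that is not degenerate, namely (iv) (and the right\-/boundary half of it): your controlled strip $(L^i-\eps_0,L^i+\eps_0)$ is centered at $L^i$, while the process starts at $x>L^i$, typically outside the strip. A continuous path from $x$ down to $y_1=L^i-\delta/2$ must traverse all of $[L^i+\eps_0,x]$, so it necessarily accumulates local time against $\lambda^i_n$ \emph{outside} the strip; the negated hypothesis only bounds $\lambda^i_n$ on $(L^i-\eps,L^i+\eps)$ and gives no control on $\lambda^i_n([L^i+\eps_0,x])$. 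If that mass blows up along the subsequence, the conditional survival probability $e^{-A^{\tau^i_n}_{\cdot}}$ of your initial leg tends to $0$, and the uniform lower bound $\PR_x(G_n)\ge c>0$ fails. So the assertion that ``the accumulation of local time against $\lambda^i_n$ occurs entirely inside the strip'' is not achievable by any path design, and this is exactly the step on which the whole contradiction rests.

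The fix is to first establish $\limsup_n\lambda^i_n([L^i+\eps_0,x+\eps_0])<\infty$ along the subsequence: if not, then $\inf_{y\in[L^i+\eps_0,x]}l^y_{\eta_{L^i+\eps_0/2}}>0$ a.s.\ forces $\PR_x(\tau^i_n>\eta_{L^i+\eps_0/2})\to 0$, hence $\mu_{2i}(I\cap(-\infty,L^i+\eps_0/2))=0$, contradicting the definition of $L^i$ as a supremum (this is the argument of case (cc2) in Lemma \ref{aux3}\eqref{1b}). The paper's proof works with the enlarged interval $[L^i-\eps,x+\eps]$ from the outset, bounding $A_{\eta_{L^i-\eps}}\le\sup_y l^y_{\eta_{L^i-\eps}}\,(K+\eps)$ on paths confined to that interval. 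Two smaller points: the geometry of your second leg is garbled ($y_1=L^i-\delta/2$ lies \emph{above} the interval $(L^i_n,L^i-\eps_0/4)$ you ask it to exit, since $\delta/2<\eps_0/4$), though the intended event $\{\eta_{L^i_n}<\eta_{\text{upper threshold}}\}$ is recoverable and matches the paper's $\{\eta_{\Tilde L^i}<\eta_{L^i-\eps/2}\}$; and cases (i)--(iii) are not literally subcases of (iv) (the disjunctions have different logical structure, and in (i)--(iii) the relevant boundary equals $x$ so the strip contains the starting point and the problem above does not arise there), so the reduction should be stated with more care.
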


\begin{remark}
        It will turn out, that the cases the \eqref{aux2_2} and \eqref{aux2_3} of Lemma \ref{aux2} are in fact empty, cf. Lemma \ref{aux3} \eqref{1}.
\end{remark}
    
    \begin{proof}
         If any of \eqref{aux2_1}, \eqref{aux2_2}, \eqref{aux2_3} or \eqref{aux2_4} were false, using that $\Tilde{L}^i \neq L^i$ implies $\Tilde{L}^i < L^i$ and $\Tilde{R}^i \neq R^i$ implies $\Tilde{R}^i < R^i$ by Lemma \ref{aux1} \eqref{consistency1}, we could find some $\eps>0$ such that 
    \begin{align*}
        K:=\limsup_{n \to \infty} \lambda^i_n ([L^i-\eps,x+\eps] ) <\infty \quad \text{or}\quad \limsup_{n \to \infty}\lambda^i_n([x-\eps, R^i+\eps] ) <\infty
    \end{align*} with $x+\eps\in (L^i,R^i)$, $\Tilde{L}^i<L^i-\eps$ or $x-\eps \in (L^i, R^i)$, $\Tilde{R}^i> R^i+ \eps$ respectively (in case of \eqref{aux2_4} we get both). W.l.o.g. we only argue for $L^i$. Note that $\tau^i_n$ stops immediately upon hitting $L^i_n$, so if $X$ reaches $L^i-\eps$ without being stopped by $\tau^i_n$, and then hits $L^i_n$ before reentering $L^i-\frac{\eps}{2}$, the process will be stopped in $I \setminus (L^i-\frac{\eps}{2},\infty)$. This can be rephrased as $X_{\tau^i_n}\in I \setminus (L^i-\frac{\eps}{2},\infty)$ $\PR_x$-a.s.\ on $\{ \eta_{L^i-\eps}< \tau^i_n\} \cap\{\theta_{\eta_{L^i-\eps}}\circ \eta_{L^i_n-\eps}<\theta_{\eta_{L^i-\eps}}\circ \eta_{L^i-\frac{\eps}{2}}\}$. Combining this with the fact that $\Ex_{z}[\ind{\eta_a<\eta_b}]$, $a\le z \le b$ is a continuous function of $a$, \cite[p. 119]{ito1974diffusion} and $\Tilde{L}^i = \limsup_{n \to \infty} L^i_n$, we obtain the following estimate, where the right hand side is positive since the exponential term is positive $\PR_x$-a.s.\ and $\Ex_{z}[\ind{\eta_a<\eta_b}]$ is positive for all $a<z<b$:
    \begin{align*}
         &\limsup_{n\to \infty} \PR_x(X_{\tau^i_n}\in I\setminus (L^i-\frac{\eps}{2},\infty))\\
         \ge& \limsup_{n \to \infty} \PR_x(\{ \eta_{L^i-\eps}< \tau^i_n\} \cap \{\theta_{\eta_{L^i}}\circ \eta_{L^i_n-\eps}<\theta_{\eta_{L^i-\eps}}\circ \eta_{L^i-\frac{\eps}{2}}\})\\
         =&  \limsup_{n \to \infty}  \Ex_x[\ind{\eta_{L^i-\eps}< \tau^i_n} \Ex_{X_{\eta_{L^i-\eps}}}[\ind{\eta_{L^i_n}<\eta_{L^i-\frac{\eps}{2}}}]]\\
         =&\limsup_{n \to \infty}  \Ex_x[\ind{\eta_{L^i-\eps}< \tau^i_n} ]\Ex_{L^i-\eps}[\ind{\eta_{L^i_n}<\eta_{L^i-\frac{\eps}{2}}}]\\
         \ge&\Ex_{L^i-\eps}[\ind{\eta_{\Tilde{L}^i}<\eta_{L^i-\frac{\eps}{2}}}]\limsup_{n \to \infty}  \Ex_x[\ind{\eta_{L^i-\eps}< \tau^i_n} ]\\
         =&\Ex_{L^i-\eps}[\ind{\eta_{\Tilde{L}^i}<\eta_{L^i-\frac{\eps}{2}}}]\limsup_{n \to \infty}\PR_x(\eta_{L^i-\eps}< \tau^i_n) \\
         \ge&\Ex_{L^i-\eps}[\ind{\eta_{\Tilde{L}^i}<\eta_{L^i-\frac{\eps}{2}}}] \limsup_{n \to \infty} \PR_x(E_i > A^{D^i_n,\lambda^i_n}_{\eta_{L^i-\eps}}) \\
         \ge& \Ex_{L^i-\eps}[\ind{\eta_{\Tilde{L}^i}<\eta_{L^i-\frac{\eps}{2}}}]\limsup_{n \to \infty} \PR_x\left(E_i > \int l^y_{\eta_{L^i-\eps}}  \lambda^i_n (dy) \right)\\
         \ge& \Ex_{L^i-\eps}[\ind{\eta_{\Tilde{L}^i}<\eta_{L^i-\frac{\eps}{2}}}]\limsup_{n \to \infty} \Ex_x\left[e^{-\int l^y_{\eta_{L^i-\eps}} d \lambda^i_n}\right]\\
         \ge& \Ex_{L^i-\eps}[\ind{\eta_{\Tilde{L}^i}<\eta_{L^i-\frac{\eps}{2}}}]\limsup_{n \to \infty} \Ex_x[\ind{l^y_{\eta_{L^i-\eps}}=0 \forall y \in I\setminus[L^i-\eps,x+\eps]} e^{-\sup_{y\in [L^i-\eps,x+\eps]}l^y_{\eta_{L^i-\eps}}(K+\eps)}]\\
         \ge&\Ex_{L^i-\eps}[\ind{\eta_{\Tilde{L}^i}<\eta_{L^i-\frac{\eps}{2}}}] \Ex_x[\ind{\eta_{L^i-\eps}<\eta_{x+\eps}}e^{-\sup_{y\in [L^i-\eps,x+\eps]}l^y_{\eta_{L^i-\eps}}(K+\eps)}]>0.
    \end{align*} Now by definition of $L^i$ and Portmanteau theorem we get a contradiction via
    \begin{align*}
        0=\mu_{2i}(I\setminus (L^i-\frac{\eps}{2},\infty)) \ge \limsup_{n\to \infty} \PR_x(X_{\tau^i_n}\in I\setminus (L^i-\frac{\eps}{2},\infty))>0.
    \end{align*}
    \end{proof}

\begin{lemma}\label{aux3} 
    \begin{enumerate}[(i)]
        \item \label{1} Let $i\in \{1,2\}$. If $L^i=x$ or $R^i=x$, then there is a subsequence $(\tau_{n_k}^i)_{k\in \N}$ of $(\tau^i_n)_{n \in \N}$ such that $\tau^i_{n_k}\stackrel{k \to \infty}{\longrightarrow}0$ $\PR_x$-a.s.\ and thus $R^i=x$ and $L^i=x$.
        \item Suppose $L^i<x<R^i$. Let $(\delta_m)_{m \in \N}\in ((0,(R^i-x)\wedge(x-L^i)))^{\N}$ with $\delta_m \searrow 0$ and 
            $$d\lambda_{n,m}^i:= \inda{[L^i+ \delta_m, R^i-\delta_m]}  d \lambda^i_n \in \rm((L^i_n,R^i_n)). $$
        \begin{enumerate}
            \item \label{1a}For each fixed $m\in\N$, there is some $N_m\in \N$ such that the measure $\lambda_{n,m}^i\vert_{[L^i+ \delta_m, R^i-\delta_m]}\in \rm([L^i+ \delta_m, R^i-\delta_m])$ is well defined for all $n\ge N_m$.
            \item \label{1b} For all $m\in \N$ and $n\ge N_m$ we extend $\lambda_{n,m}^i\vert_{[L^i+ \delta_m, R^i-\delta_m]}$ by 0 to a measure on $(L^i,R^i)$ and denote the extension by $\Tilde{\lambda}^i_{n,m}$. For each $m \in \N$ we have $$\sup_{n\ge N_m}\Tilde{\lambda}^i_{n,m}((L^i, R^i))<\infty.$$ 
            \item \label{constr} Let $(n_k)_{k\in \N}\in \N^{\N}$ be strictly increasing. There is a subsequence $(n_{k_m})_{m\in \N}$ such that $n_{k_m}\ge N_m$ for all $m\in \N$ and
                \begin{align*}
                    \Tilde{\lambda}^i_{n_{k_m},m} \stackrel{m \to \infty}{\longrightarrow}\Tilde{\lambda}^i 
                \end{align*} vaguely for some $\Tilde{\lambda}^i\in \rm((L^i,R^i))$.                 
        \end{enumerate}
            \item \label{2} Based on this, we can construct $\lambda^1,\lambda^2$ as follows:
            \begin{enumerate}
                \item \label{2a}If $L^1=x,L^2=x$ we set $\lambda^1:= \lambda^2:=0\in \rm(\varnothing)$.
                \item \label{2b}If $L^i=x$ and $L^j<x<R^j$ for $i\neq j \in \{1,2\}$, we set $\lambda^i:=0$ as before and apply \eqref{constr} with $(n_k)_{k\in \N}$ being such that $\tau^i_{n_k}\stackrel{k \to \infty}{\longrightarrow} 0$ $\PR_x$-a.s.\ to find a subsequence $(n_{k_m})_{m\in \N}$ and $\Tilde{\lambda}^j\in \rm((L^j,R^j))$ such that $\Tilde{\lambda}^j_{n_{k_m},m} \stackrel{m \to \infty}{\longrightarrow}\Tilde{\lambda}^j$ vaguely. Then we set $\lambda^j:=\Tilde{\lambda}^j$.
                \item \label{2c}If $L^i<x<R^i$ for all $i\in \{1,2\}$, we apply \eqref{constr} with $(n)_{n\in \N}$ in place of $(n_k)_{k\in \N}$ to find a subsequence $(n_{k})_{k\in \N}$ and $\Tilde{\lambda}^1\in \rm((L^1,R^1))$ such that $\Tilde{\lambda}^1_{n_{k},k} \stackrel{k \to \infty}{\longrightarrow}\Tilde{\lambda}^1$ vaguely.  Then we apply \eqref{constr} to the subsequence $(n_k)_{k\in \N}$ we obtained in the last step to find a further subsequence $(n_{k_m})_{m\in \N}$ and $\Tilde{\lambda}^2\in \rm((L^2,R^2))$ such that $\Tilde{\lambda}^2_{n_{k_m},m} \stackrel{m \to \infty}{\longrightarrow}\Tilde{\lambda}^2 $ vaguely. We now set $\lambda^1:=\Tilde{\lambda}^1$ and $\lambda^2:= \Tilde{\lambda}^2$.
            \end{enumerate}  
        \item \label{3} There is a subsequence $(n_k)_{k\in \N}$ such that for each $i\in \{1,2\}$ either $\tau^i_{n_k}\stackrel{k \to \infty}{\longrightarrow}0$ $\PR_x$-a.s.\ or $\Tilde{\lambda}^i_{n_{k},k} \stackrel{k \to \infty}{\longrightarrow}\lambda^i $ vaguely on $\rm((L^i,R^i))$ with $L^i<x<R^i$.          
    \end{enumerate}
\end{lemma}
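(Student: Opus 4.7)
The plan is to dispatch the four items in order, with (iii) and (iv) emerging as direct combinations of (i) and (ii). The guiding idea is the following dichotomy: if the continuation region degenerates to $\{x\}$ (so $L^i = x$ or $R^i = x$), then after extraction the stopping times converge to $0$ a.s.; if it is nondegenerate, then the rate measures have uniformly bounded mass on compact subsets of $(L^i, R^i)$ and a diagonal argument produces a vague limit.

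For (i) I would assume without loss of generality $L^i = x$ and invoke the trichotomy from Lemma \ref{aux2}: either $\Tilde{L}^i = x$, or $\Tilde{R}^i = x$ (which can only occur when $R^i = x$), or $\liminf_n \lambda^i_n((x - \eps, x + \eps) \cap D^i_n) = \infty$ for all $\eps > 0$. In the first case a subsequence satisfies $L^i_{n_k} \to x$, and since $L^i_{n_k} \in I \setminus D^i_{n_k}$ is a forced exit point, $\tau^i_{n_k} \le \eta_{L^i_{n_k}} \to 0$ $\PR_x$-a.s.\ by continuity of the hitting time in its argument; the second case is symmetric. In the third case, joint continuity and strict positivity of the local time of $X$ at $x$ for any positive time, together with the blow-up of the $\lambda^i_n$-mass in every neighborhood of $x$, force $A^{\tau^i_n}_t \to \infty$ in probability for every $t > 0$, yielding an a.s.-convergent subsequence $\tau^i_{n_k} \to 0$. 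In all subcases, path continuity gives $X_{\tau^i_{n_k}} \to x$ a.s.; combined with $\PR_x^{X_{\tau^i_n}} \to \mu_{2i}$ this forces $\mu_{2i} = \delta_x$, whence Lemma \ref{aux1} yields $L^i = R^i = x$. Incidentally this also rules out cases (ii) and (iii) of Lemma \ref{aux2} altogether.

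For (ii) I assume $L^i < x < R^i$. Well-definedness (1a) is immediate: since $\limsup_n L^i_n = \Tilde{L}^i \le L^i < L^i + \delta_m$ and $\liminf_n R^i_n \ge R^i > R^i - \delta_m$ by Lemma \ref{aux1}, the inclusion $[L^i + \delta_m, R^i - \delta_m] \subset D^i_n$ holds for all $n$ beyond some threshold $N_m$. The mass bound (1b) is the crux, and I plan to argue it by contradiction. If $\lambda^i_n([L^i + \delta_m, R^i - \delta_m]) \to \infty$ along some subsequence, I will show $\PR_x(X_{\tau^i_n} \in [L^i, L^i + \delta_m/2)) \to 0$: to land in this set the process must first reach $L^i + \delta_m/2$, and on the event $\{\eta_{L^i + \delta_m/2} < \eta_{R^i - \delta_m/2}\}$ (positive probability) the integrand $l^y_{\eta_{L^i + \delta_m/2}}$ is strictly positive for $y \in [L^i + \delta_m, x]$, so $A^{\tau^i_n}_{\eta_{L^i + \delta_m/2}}$ blows up, forcing stopping before that hitting time. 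Combined with the symmetric estimate from the right this yields the claimed convergence to $0$. On the other hand, by the definition of $L^i$ as the left edge of $\supp \mu_{2i}$ and the Portmanteau theorem, the liminf of the same probability is at least $\mu_{2i}([L^i, L^i + \delta_m/2)) > 0$, a contradiction. Item (constr) then reduces to a standard Helly/diagonal selection: for each $m$ the uniformly bounded family $\{\Tilde{\lambda}^i_{n,m}\}_{n \ge N_m}$ on the compactum $[L^i + \delta_m, R^i - \delta_m]$ admits a vaguely convergent subsequence, and a diagonal extraction yields a single sequence $(n_{k_m})$ along which $\Tilde{\lambda}^i_{n_{k_m}, m}$ converges vaguely on $(L^i, R^i)$ to some $\Tilde{\lambda}^i \in \rm((L^i, R^i))$; consistency holds because any $f \in C_c((L^i, R^i))$ has compact support inside some $[L^i + \delta_{m_0}, R^i - \delta_{m_0}]$ and only the corresponding restrictions of $\lambda^i_n$ contribute for $m \ge m_0$.

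Items (iii) and (iv) are then a bookkeeping exercise driven by the two flags ``$L^i = x$ or $L^i < x < R^i$'' for $i = 1, 2$: set $\lambda^i = 0$ in the degenerate case and extract the vague limit via (constr) otherwise, performing the extractions successively so that vague convergence for the first coordinate is preserved when refining the subsequence for the second. The hard part will be (1b): bounding the $\lambda^i_n$-mass in the interior is not controlled by Lemma \ref{aux2} (which only addresses boundary behaviour) and requires the local-time transport argument outlined above to convert interior mass blow-up into a contradiction with the support definition of $L^i$ (and symmetrically $R^i$).
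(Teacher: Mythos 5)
Your overall strategy coincides with the paper's: item (i) via the trichotomy of Lemma \ref{aux2} (hitting-time squeeze in the first two cases, local-time blow-up plus Borel--Cantelli in the third, then $\mu_{2i}=\delta_x$ by uniqueness of weak limits); item (ii)(a) from \eqref{consistency1}; item (ii)(b) by contradiction against the supremal/infimal choice of $L^i,R^i$ via Portmanteau; item (constr) by Helly/diagonal extraction with the consistency of restrictions; items (iii)--(iv) as bookkeeping.

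The one step that would fail as literally written is (1b). You claim that $\lambda^i_n([L^i+\delta_m,R^i-\delta_m])\to\infty$ forces $\PR_x(X_{\tau^i_n}\in[L^i,L^i+\delta_m/2))\to 0$. This is false: if the mass concentrates on $(x,R^i-\delta_m]$ (e.g.\ $\lambda^i_n=n\,\delta_{R^i-\delta_m}$), a path running from $x$ straight down to $L^i+\delta_m/2$ accumulates no local time where $\lambda^i_n$ lives, so $A^{\tau^i_n}_{\eta_{L^i+\delta_m/2}}$ does not blow up and the left-tail probability stays bounded away from $0$; the strict positivity of $l^y_{\eta_{L^i+\delta_m/2}}$ you invoke only holds for $y$ in the traversed interval $[L^i+\delta_m,x]$. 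The paper avoids this by decomposing $\lambda^i_{n,m}([L^i+\delta_m,R^i-\delta_m])=\lambda^i_n(\{x\})+\lambda^i_n([L^i+\delta_m,x))+\lambda^i_n((x,R^i-\delta_m])$ and, after passing to a subsequence along which one specific piece diverges, running the corresponding one-sided (or, for the atom at $x$, two-sided) transport argument to contradict the definition of $L^i$ or of $R^i$. Your "combined with the symmetric estimate from the right" gestures at this, but the logic must be: total mass blow-up implies that \emph{some} piece blows up along a further subsequence, and only that piece's side yields the Portmanteau contradiction. With that case split made explicit, your argument is the paper's. (Minor point: for the Portmanteau lower bound use the relatively open set $I\cap(-\infty,L^i+\delta_m/2)$, whose $\mu_{2i}$-mass equals that of $[L^i,L^i+\delta_m/2)$ since $\mu_{2i}(I\cap(-\infty,L^i))=0$.)
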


    \begin{proof}
        \eqref{1} By Lemma \ref{aux2}, $x=L^i = \Tilde{L}^i$, $x=R^i = \Tilde{R}^i$ or $\liminf_{n \to \infty} \lambda^i_n((x-\eps,x+\eps)\cap D^i_n)=\infty$ for all $\eps>0$. We start by treating the case $x=L^i= \Tilde{L}^i$. We choose $(n_k)_{k \in \N}$ such that $L^i_{n_k} \stackrel{k \to \infty}{\longrightarrow} \Tilde{L}^i = x$. Then
        \begin{align*}
            0 \le \tau^i_{n_k} \le \eta_{L^i_{n_k}} \stackrel{k\to \infty}{\longrightarrow} 0\quad \PR_x\text{-a.s.}
        \end{align*} The case $x = R^i= \Tilde{R}^i$ case be treated analogously.
        Next we assume $\liminf_{n \to \infty} \lambda^i_n((x-\eps,x+\eps)\cap D^i_n)=\infty$ for all $\eps>0$. Let $\eps>0$. The assumption yields
        \begin{align*}
            A^{D^i_n,\lambda^i_n}_{\tau^{(x-\eps,x+ \eps)}} \ge \int l^y_{\tau^{(x-\eps,x+ \eps)}}  \lambda^i_n(dy) \stackrel{n \to \infty}{\longrightarrow}\infty
        \end{align*} $\PR_x$-a.s.\ since $y\mapsto l^y_{\tau^{(x-\eps,x+ \eps)}}$ is continuous with $\PR_x(l^y_{\tau^{(x-\eps,x+ \eps)}}>0)=1$. From this we infer
        \begin{align*}
            \PR_x(\tau^i_n > \tau^{(x-\eps,x+ \eps)}) 
            = \PR_x(E_i> A^{D^i_n,\lambda^i_n}_{\tau^{(x-\eps,x+ \eps)}})
            \ge \PR_x\left( E_i > \int l^y_{\tau^{(x-\eps,x+ \eps)}} \lambda^i_n(dy)\right) 
            \stackrel{n \to \infty}{\longrightarrow} 0.
        \end{align*} Since $\eps>0$ was arbitrary, we can choose a subsequence $(\tau^i_{n_k})_{k \in \N}$ of $(\tau^i_n)_{n \in \N}$ such that $\sum_{k\in \N}\PR_x(\tau^i_{n_k} > \tau^{(x-\frac{1}{k},x + \frac{1}{k})})<\infty$. By Borel Cantelli's lemma $\PR_x(\limsup_{k \to \infty}\{\tau^i_{n_k} > \tau^{(x-\frac{1}{k},x + \frac{1}{k})}\})=0$. This implies
        \begin{align*}
            0\le \limsup_{k \to \infty} \tau^i_{n_k} \le \lim_{k \to \infty} \tau^{(x-\frac{1}{k},x + \frac{1}{k})} = 0 \quad \PR_x\text{-a.s.}
        \end{align*}
        Since $X$ has continuous paths, we infer $X_{\tau^i_{n_k}} \to x$ $\PR_x$-a.s.\ and thus $\PR_x^{X_{\tau_{n_k}^i}} \to \delta_x$, where $\delta_x$ denotes the Dirac measure in $x$. By uniqueness of weak limits, this yields $\mu_{2i}= \delta_x$. With this, $L^i=x= R^i$ follows from the definition.
        
        \eqref{1a} By Lemma \ref{aux1} \eqref{consistency1} given $\delta_m\in(0,(R^i-x)\wedge(x-L^i)),m\in \N$ there is some $N_m\in \N$ such that for all $n \ge N_m$ we have $L^i_n \le \limsup_{n \to \infty} L^i_n +\delta_m = \Tilde{L}^i+\delta_m \le L^i+\delta_m<R^i-\delta_m \le \Tilde{R}^i-\delta_m = \liminf_{n \to \infty} R^i_n - \delta_m \le R^i_n$. I.e.\ $(L^i_n,R^i_n)\supset [L^i+\delta_m,R^i-\delta_m]$ and thus $\lambda_{n,m}^i\vert_{[L^i+ \delta_m, R^i-\delta_m]}\in \rm([L^i+ \delta_m, R^i-\delta_m])$ is well defined.
        
        \eqref{1b} By definition $\Tilde{\lambda}^i_{n,m}((L^i, R^i)) = \lambda^i_{n,m}([L^i+ \delta_m, R^i-\delta_m])$. Since Radon measures are finite on compacts, we have $\lambda^i_{n,m}([L^i+ \delta_m, R^i-\delta_m])<\infty$ for all fixed $n\ge N_m$. Thus it suffices to show $\limsup_{n\to \infty}\lambda^i_{n,m}([L^i+ \delta_m, R^i-\delta_m])<\infty$. We argue by contradiction so assume this were false. Since
        \begin{align*}
            \lambda^i_{n,m}([L^i+ \delta_m, R^i-\delta_m]).
            =& \lambda_n^i(\{x\}) +  \lambda_n^i([L^i+\delta_m,x)) + \lambda_n^i((x,R^i-\delta_m]),
        \end{align*} this implies one of the following:
        \begin{enumerate}
            \item \label{cc1} $\lambda_{n_k}^i(\{x\}) \to \infty$ for $k \to \infty$ or
            \item \label{cc2} $\lambda_{n_k}^i([L^i+\delta_m,x)) \to \infty$ for $k \to \infty$ or
            \item \label{cc3} $\lambda_{n_k}^i((x,R^i-\delta_m]) \to \infty$ for $k \to \infty$.
        \end{enumerate} If \eqref{cc1} using $\PR_x(l^x_{\tau^{(L^i+\delta_m, R^i-\delta_m)}}>0)=1$ we obtain
    \begin{align*}
         \PR_x^{X_{\tau^i_{n_k}}}(I \setminus [L^i+\delta_m, R^i- \delta_m]) \le \PR_x(l^x_{\tau^{(L^i+\delta_m, R^i-\delta_m)}} \lambda^i_{n_k,m}(\{x\})<E_i) \stackrel{k\to \infty}{\longrightarrow}0.
    \end{align*} 
    With Portmanteau's theorem, this implies 
    \begin{align*}
        &\mu_{2i}(I \setminus [L^i+\delta_m, R^i- \delta_m])
        \le\liminf_{n \to \infty} \PR_x^{X_{\tau^i_{n}}}(I \cap (-\infty,L^i+\delta_m)) \\
        &\le \liminf_{n \to \infty}\PR_x^{X_{\tau^i_{n}}}(I \setminus [L^i+\delta_m, R^i- \delta_m]) 
        =0,
    \end{align*} contradicting the infimal choice of $L^i$. We finish by treating the case \eqref{cc2}, \eqref{cc3} being analogous. First note that for all $y\in [L^i+\delta_m,x] $ we have $l^y_{\eta_{L^i-\delta_{m+1}}} > 0$ $\PR_x$-a.s. Using the joint continuity of the local time, we infer
    \begin{align*}
         \inf_{y\in [L^i+\delta_m,x]}l^y_{\eta_{L^i-\delta_{m+1}}}>0 \quad \PR_x\text{-a.s.}
    \end{align*} That yields
    \begin{align*}
         &\mu_{2i}(I \setminus (-\infty,L^i+\delta_{m+1}))
         \le \lim_{k\to \infty}\PR_x^{X_{\tau^i_{n_k}}}(I \setminus [L^i+\delta_{m+1}, \infty)) \\
         \le&  \lim_{k\to \infty}\PR_x\left( \int_{[L^i+\delta_m,x) } l^y_{\eta_{L^i-\delta_{m+1}}} \lambda^i_{n_k}(dy)<E_i \right) \\
         \le& \lim_{k\to \infty}\PR_x \left( \left(\inf_{y\in [L^i+\delta_m,x]}l^y_{\eta_{L^i-\delta_{m+1}}} \right) \lambda^i_{n_k}([L^i+\delta_m,x)) \right)
         =0,
    \end{align*} once again contradicting the choice of $L^i$. 

    \eqref{constr} Since the following arguments do not depend on $(n_k)_{k\in \N}$, we assume we assume $(n_k)_{k\in \N} = (n)_{n\in\N}$ w.l.o.g. By \cite[Korollar 31.3]{Bauer1992} for an open or closed interval $\chi\subset\R$ and each $c>0$ the set $\{\mu \in \rm(\chi):\mu(\chi)\le c\}$ is vaguely compact. Thus by \eqref{1b} the sequence $(\Tilde{\lambda}^i_{n,m})_{n\ge N_m}$ has a subsequence $(\Tilde{\lambda}^i_{n^1_k,1})_{k\in\N}$ with $n^1_1\ge N_1$ that converges vaguely to a limit $\Tilde{\lambda}^i_1\in \rm((L^i,R^i))$. Now in the same way, for each $m \ge 2$ we iteratively choose subsequences $(\Tilde{\lambda}^i_{n^m_k,m})_{k \in \N}$ of $(\Tilde{\lambda}^i_{n^{m-1}_k,m-1})_{k \in \N}$ with $n^m_1\ge N_m$ that converge vaguely to limit points $\Tilde{\lambda}^i_m\in \rm((L^i,R^i))$. 
    
    Next we show $\Tilde{\lambda}^i_m\vert_{[L^i+\delta_{j},R^i-\delta_{j}]} = \Tilde{\lambda}^i_{m'} \vert_{[L^i+\delta_{j},R^i-\delta_{j}]}$ for all $m'\ge m >j\in \N$ and $n \ge N$. For that let $f:[L^i+\delta_{j}, R^i-\delta_{j}] \to \R$ be continuous and bounded. $f$ can be extended to a continuous bounded function $\overline{f}:(L^i,R^i)\to \R$ such that $\overline{f}\vert_{(L^i,L^i+\delta_{j+1})\cup (R^i-\delta_{j+1},R^i)}\equiv 0$. By definition $\lambda^i_{n,m}\vert_{[L^i+\delta_m,R^i-\delta_m]} =\lambda^i_{n,m'}\vert_{[L^i+\delta_m,R^i-\delta_m]}$ for all $m\in\N$ and all $n\ge N$. From $\supp(f) \subset [L^i+\delta_m,R^i-\delta_m]$ we infer
    \begin{align*}
        \int_{(L^i,R^i)}\overline{f} d\Tilde{\lambda}^i_m= \lim_{k \to \infty}\int_{(L^i,R^i)} \overline{f} d\lambda^i_{n^{m'}_k,m}= \lim_{k \to \infty}\int_{(L^i,R^i)} \overline{f} d\lambda^i_{n^{m'}_k,m'}= \int_{(L^i,R^i)} \overline{f} d\Tilde{\lambda}^i_{m'}
    \end{align*} which proves the above claim. 
    With that we are able to consistently define the measure $\Tilde{\lambda}^i\in\rm((L^i,R^i))$ via
    \begin{align*}
        \Tilde{\lambda}^i(K):=\Tilde{\lambda}^i_j(K) 
    \end{align*} for any compact $K\subset(L^i,R^i)$, where $j\ge 2$ is chosen such that $K\subset[L^i+\delta_{j-1},R^i-\delta_{j-1}]$.
    Now we show $\lambda^i_{n^{m}_m,m}\to \lambda^i$ vaguely for $m \to \infty$. For this, let $f\in \ccc_{\cpt}((L^i,R^i))$, i.e., a continuous function with compact support. We choose $j\in \N$ such that $\supp(f)\subset [L^i+\delta_j,R^i-\delta_j]$. Now as above and by construction of $\lambda^i$ we obtain
    \begin{align*}
        \lim_{m\to \infty} \int_{(L^i,R^i)} f d\lambda^i_{n^{m}_m,m} =\lim_{m\to \infty} \int_{(L^i,R^i)} f d\lambda^i_{n^{m}_m,j+1} = \int_{(L^i,R^i)} f d \Tilde{\lambda}^i_{j+1} = \int_{(L^i,R^i)} f d \lambda^i
    \end{align*}as claimed.

    \eqref{2} For \eqref{2a} and \eqref{2c} there is nothing to prove and for \eqref{2b} we find the subsequence $(n_k)_{k\in \N}$ by \eqref{1}.    

    \eqref{3} In case of \eqref{2a} by \eqref{1} we find $(n_k)_{k \in \N}$ and $(n_k')_{k\in \N}$ such that $\tau^1_{n_k} \stackrel{k \to \infty}{\longrightarrow} 0$ and $\tau^2_{n_k'} \stackrel{ k \to \infty}{\longrightarrow} 0$ $\PR_x$-a.s. This implies $X_{\tau^1_{n_k}} \stackrel{k \to \infty}{\longrightarrow} x$ and $X_{\tau^2_{n_k'}} \stackrel{k \to \infty}{\longrightarrow} x$ $\PR_x$-a.s. By uniqueness of weak limits we obtain $\mu_1 = \delta_0, \mu_2 = \delta_x, \mu_3 = \delta_0, \mu_4 = \delta_x$, i.e.\ $\mu = \delta_0 \otimes \delta_x \otimes_0 \otimes \delta_x$. Thus $(\tau^1_n,X_{\tau^1_n},\tau^2_n,X_{\tau^2_n}) \to \mu $ in distribution already yields $(\tau^1_n,X_{\tau^1_n},\tau^2_n,X_{\tau^2_n}) \to(0,x,0,x)$ in probability. Thus there is $(n_k'')_{k\in \N}$ such that $(\tau^1_{n_k''},X_{\tau^1_{n_k''}},\tau^2_{n_k''},X_{\tau^2_{n_k''}})\stackrel{k \to \infty}{\longrightarrow} (0,x,0,x)$ $\PR_x$-a.s. In the cases \eqref{2b} and \eqref{2c} it is sufficient to choose $(n_{k_m})_{m\in \N}$.  
\end{proof}

\begin{lemma}\label{aux4}
    Let $(n_k)_{k\in \N}$ be the sequence provided by Lemma \ref{aux3} \eqref{3}. We consider $i\in\{1,2\}$ such that $\Tilde{\lambda}^i_{n_{k},k} \stackrel{k \to \infty}{\longrightarrow}\lambda^i $ vaguely on $\rm((L^i,R^i))$ with $L^i<x<R^i$.
    \begin{enumerate}[(i)]
        \item \label{aux4_1}There is a $\PR_x$-nullset $N_1$ such that for all $t\ge 0$ and all $\omega\in \{ t < \tau^{D^i}\}\setminus N_1$
        \begin{align*}
            A^{D^i_{n_k},\lambda^i_{n_k}}_t(\omega)\stackrel{k \to \infty}{\longrightarrow} A^{D^i,\lambda^i}_t(\omega) \le  A^{D^i,\lambda^i}_T(\omega)<\infty.
        \end{align*}
        \item \label{aux4_2}We have $A^{D^i,\lambda^i}_{\tau^i+\eps}>E_i$ $\PR_x$-a.s.\ for every $\eps >0$.
        \item \label{aux4_3}There is a $\PR_x$-nullset $N_2$ such that for all $t\ge 0$ and all $\omega\in \{ t > \tau^{D^i}\}\setminus N_2$
        \begin{align*}
            A^{D^i_{n_k},\lambda^i_{n_k}}_t(\omega) \stackrel{k\to \infty}{\longrightarrow} \infty.
        \end{align*}
        \item \label{aux4_4} We have $\tau^i_{n_k} \stackrel{k\to \infty}{\longrightarrow} \tau^i$ and $X_{\tau^i_{n_k}}\stackrel{k\to \infty}{\longrightarrow} X_{\tau^i}$ $\PR_x$-a.s.
    \end{enumerate}
\end{lemma}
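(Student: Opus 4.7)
The strategy is to prove (i)--(iv) in order, leveraging the vague convergence $\Tilde{\lambda}^i_{n_k, k} \to \lambda^i$ (Lemma \ref{aux3}) together with the dichotomy from Lemma \ref{aux2}. As a preliminary step I refine $(n_k)$---applying Bolzano--Weierstrass on $(L^i_n, R^i_n)$ and then the diagonal construction of Lemma \ref{aux3} to preserve the vague convergence---so that $L^i_{n_k} \to \Tilde{L}^i$ and $R^i_{n_k} \to \Tilde{R}^i$ for $i = 1, 2$. For (i), on $\{t < \tau^{D^i}\}$ the continuous path $(X_s)_{s \le t}$ lies in some random $[L^i + \delta, R^i - \delta] \subset D^i$; Lemma \ref{aux1} with the refinement gives $[L^i + \delta, R^i - \delta] \subset D^i_{n_k}$ for $k$ large, so $\tau^{D^i_{n_k}} > t$ and the infinite term in $A^{D^i_{n_k}, \lambda^i_{n_k}}_t$ is inactive. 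Since $l^y_t(\omega) = 0$ outside $[L^i + \delta, R^i - \delta]$, the continuous map $y \mapsto l^y_t(\omega)$ is compactly supported in $(L^i, R^i)$ and thus a valid test function; vague convergence of $\Tilde{\lambda}^i_{n_k, k}$ (which agrees with $\lambda^i_{n_k}$ on the support of $l^y_t$ once $\delta_k < \delta$) gives $A^{D^i_{n_k}, \lambda^i_{n_k}}_t \to \int l^y_t \, \lambda^i(dy) = A^{D^i, \lambda^i}_t$, finite since $\lambda^i$ is Radon.

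For (ii), distinguish $\{\tau^i = \tau^{D^i}\}$, where $A^{D^i, \lambda^i}_{\tau^i + \eps} = \infty > E_i$ trivially, from $\{\tau^i < \tau^{D^i}\}$, where continuity of $A$ gives $A^{D^i, \lambda^i}_{\tau^i} = E_i$. Reproducing the argument from Lemma \ref{mcont}\eqref{mcont3}, the identity $\int_0^{\tau^i} \ind{X_s \notin \supp(\lambda^i)} \, dA^{D^i, \lambda^i}_s = 0$ forces $X_{\tau^i} \in \supp(\lambda^i)$ $\PR_x$-a.s. Applying the strong Markov property of $X$ at $\tau^i$ and using joint continuity and regularity of the diffusion to get $l^z_\eps > 0$ for $z$ in a neighborhood $V$ of $y := X_{\tau^i}$ a.s., combined with $\lambda^i(V) > 0$, yields the strict inequality $A^{D^i, \lambda^i}_{\tau^i + \eps} > E_i$.

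For (iii), fix $\omega$ with $t > \tau^{D^i}(\omega)$ and, without loss of generality, $X_{\tau^{D^i}}(\omega) = L^i$ with $L^i \in I^\circ$ (the absorbing-boundary case is handled separately using (B), and the right-endpoint case is symmetric). Regularity of $X$ yields $m := \min_{s \le t} X_s < L^i$ $\PR_x$-a.s. Lemma \ref{aux2}\eqref{aux2_4} together with the opening refinement then produces a clean dichotomy: if $\Tilde{L}^i = L^i$ then $L^i_{n_k} \to L^i > m$, so $L^i_{n_k} > m$ for $k$ large and the path exits $D^i_{n_k}$ by time $t$, giving $A^{D^i_{n_k}, \lambda^i_{n_k}}_t = \infty$; if $\Tilde{L}^i < L^i$ then $\lambda^i_n((L^i - \eps, L^i + \eps)) \to \infty$ for every $\eps > 0$, and since $y \mapsto l^y_t$ is continuous and strictly positive on $(L^i - \eps, L^i + \eps) \subset [m, M]$ for small $\eps$, integration forces $\int l^y_t \, \lambda^i_{n_k}(dy) \to \infty$, so $A^{D^i_{n_k}, \lambda^i_{n_k}}_t \to \infty$.

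For (iv), I combine (i)--(iii) exactly as in the proof of Lemma \ref{mcont}\eqref{mcont3}. The bound $\liminf_k \tau^i_{n_k} \ge \tau^i$ follows because for $t < \tau^i \le \tau^{D^i}$ part (i) gives $A^{D^i_{n_k}, \lambda^i_{n_k}}_t \to A^{D^i, \lambda^i}_t < E_i$, forcing $\tau^i_{n_k} > t$ eventually. For $\limsup_k \tau^i_{n_k} \le \tau^i$, fix $\eps > 0$ and split: if $\tau^i + \eps \le \tau^{D^i}$ apply (i) at $\tau^i + \eps$ together with (ii); otherwise apply (iii); in both cases $A^{D^i_{n_k}, \lambda^i_{n_k}}_{\tau^i + \eps} > E_i$ eventually, so $\tau^i_{n_k} \le \tau^i + \eps$. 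Continuity of the paths of $X$ then yields $X_{\tau^i_{n_k}} \to X_{\tau^i}$. I expect the main obstacle to be the interplay between the endpoint dynamics of $L^i_{n_k}$ and the mass-concentration dichotomy in (iii); the opening refinement is designed precisely to convert what would otherwise be a delicate sub-subsequence analysis into the clean two-case argument above.
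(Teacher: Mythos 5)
Your overall strategy coincides with the paper's: part (i) via vague convergence with the (compactly supported) local time profile as test function, part (ii) via $X_{\tau^i}\in\supp(\lambda^i)$ plus regularity of the local time after $\tau^i$, part (iii) via the dichotomy ``the approximating endpoints converge to $L^i$'' versus ``the mass of $\lambda^i_{n_k}$ near $L^i$ blows up'', and part (iv) by assembling (i)--(iii) exactly as in Lemma \ref{mcont}\eqref{mcont3}. Your treatment of (i) is in fact slightly cleaner than the paper's: by observing that on $\{t<\tau^{D^i}\}$ the path range up to $t$ is a compact subset of $(L^i,R^i)$, you avoid the paper's interchange of the limits in $k$ and $\eps$ over the approximating exit times $\tau^{(L^i+\eps,R^i-\eps)}$. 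Parts (ii) and (iv) are essentially verbatim the paper's arguments.

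The one step that would fail as written is the ``opening refinement'' on which your case analysis in (iii) rests. You cannot in general extract a single subsequence along which $L^i_{n_k}\to\Tilde{L}^i$ \emph{and} $R^i_{n_k}\to\Tilde{R}^i$ for both $i$: $\Tilde{L}^i$ is a $\limsup$ and $\Tilde{R}^i$ a $\liminf$ of the \emph{full} sequence, and a subsequence realizing one of these four extremal values need not realize the others (e.g.\ $L_n$ alternating between two values and $R_n$ alternating in the opposite phase). Consequently, in your Case 1 of (iii) the implication ``$\Tilde{L}^i=L^i\Rightarrow L^i_{n_k}\to L^i$'' can fail: the refined subsequence may satisfy $L^i_{n_k}\to\ell^i<L^i=\Tilde{L}^i$, and then Lemma \ref{aux2}\eqref{aux2_4}, which is a statement about the full sequence, gives you no blow-up of mass either, so neither branch of your dichotomy fires for $\omega$ with $\ell^i<\min_{s\le t}X_s<L^i$. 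The repair is the one the paper itself spells out: work with the actual limits $\ell^i,r^i$ of the refined subsequence and observe that the contradiction argument of Lemma \ref{aux2} applies verbatim to \emph{any} subsequence, because $L^i,R^i$ depend only on $\mu$ (unchanged under passage to subsequences) while $\Tilde{L}^i$ gets replaced by the subsequential $\limsup$; this yields ``$\ell^i=L^i$ or $\liminf_k\lambda^i_{n_k}((L^i-\eps,L^i+\eps)\cap D^i_{n_k})=\infty$ for all $\eps>0$'', which is exactly the dichotomy your two cases need. With that substitution (and granting the standard boundary caveats that the paper also glosses over), your proof is complete.
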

   
\begin{proof}
    \eqref{aux4_1} Let $N_1'$ be a nullset such that $\tau^{(L^i+\eps, R^i-\eps)}(\omega) \stackrel{\eps \searrow 0}{\longrightarrow} \tau^{(L^i,R^i)}(\omega)= \tau^{D^i}(\omega)$. This yields $A^{D^i,\lambda^i}_{t\wedge\tau^{(L^i+\eps, R^i-\eps)}(\omega)}(\omega) \stackrel{\eps \searrow 0}{\longrightarrow} A^{D^i,\lambda^i}_{t}(\omega)$ and $A^{D^i_{n_k},\lambda^i_{n_k}}_{t\wedge\tau^{(L^i+\eps, R^i-\eps)}(\omega)}(\omega) \stackrel{\eps \searrow 0}{\longrightarrow} A^{D^i_{n_k},\lambda^i_{n_k}}_{t}(\omega)$ uniformly in $k\in \N$ for each $\omega \in \Omega\setminus N_2$ and all $t<\tau^{D^i}(\omega)$, which justifies the interchanging of limits in the calculation to follow.
    In the next line we will use that for $D\subset I$ open and $\lambda\in \rm(D)$ by definition we have $A^{D,\lambda}_{t}(\omega) = \int l^y_t(\omega) \lambda(dy)$ for $t<\tau^D(\omega)$.
    Then we invoke the fact that $\lambda^i_{n_k}\vert_{[L^i+\eps, R^i-\eps]} = \Tilde{\lambda}^i_{n_k,k} \vert_{[L^i+\eps, R^i-\eps]} $ for sufficiently large $k$ by definition.
    To find the limit in $k$, let $l^{\bigcdot}_{t \wedge \tau^{(L^i+\eps, R^i-\eps)}}(\omega) :(L^i,R^i)\to \R, y \mapsto l^y_{t \wedge \tau^{(L^i+\eps, R^i-\eps)}}(\omega)$ and note that $l^y_{t \wedge \tau^{(L^i+\eps, R^i-\eps)}}(\omega) = 0$ for all $y\not\in(L^i+\eps,R^i-\eps)$ and all $\omega\in \Omega$. Thus there is a nullset $N_1''$ such that $l^{\bigcdot}_{t \wedge \tau^{(L^i+\eps, R^i-\eps)}}(\omega)\in \ccc_{\cpt}((L^i,R^i))$ for all $\omega\in \Omega \setminus N_1''$ and all $t<\tau^{D^i}(\omega)$, so we can use vague convergence. 
    Proceeding as explained yields
    \begin{align*}
        &\lim_{k \to \infty} A^{D^i_{n_k},\lambda^i_{n_k}}_{t}(\omega)
        =\lim_{k \to \infty} \lim_{\eps \searrow 0 } A^{D^i_{n_k},\lambda^i_{n_k}}_{t\wedge\tau^{(L^i+\eps, R^i-\eps)}(\omega)}(\omega) \\
        =&\lim_{\eps \searrow 0 } \lim_{k\to \infty} A^{D^i_{n_k},\lambda^i_{n_k}}_{t\wedge\tau^{(L^i+\eps, R^i-\eps)}(\omega)}(\omega) 
        = \lim_{\eps \searrow 0 }\lim_{k\to \infty} \int_{D^i_{n_k}} l^y_{t\wedge\tau^{(L^i+\eps, R^i-\eps)}(\omega)} (\omega) d \lambda^i_{n_k}  \\
        =&\lim_{\eps \searrow 0 }\lim_{k \to \infty} \int_{D^i}  l^y_{t\wedge\tau^{(L^i+\eps, R^i-\eps)}(\omega)} (\omega) d \Tilde{\lambda}^i_{n_k,k}
        = \lim_{\eps \searrow 0 }\int_{D^i} l^y_{t\wedge\tau^{(L^i+\eps, R^i-\eps)}(\omega)} (\omega) d \lambda^i\\
        =&\lim_{\eps \searrow 0 } A^{D^i,\lambda^i}_{t\wedge\tau^{(L^i+\eps, R^i-\eps)}(\omega)}(\omega)
        = A^{D^i,\lambda^i}_{t}(\omega)
    \end{align*} for all $\omega \in \Omega\setminus N_1$ with $N_1:=N_1'\cup N_1''$.

    \eqref{aux4_2} If $A^{D^i,\lambda^i}_{\tau^i}= \infty$ the claim is trivial, so we are left with the set $ \{A^{D^i,\lambda^i}_{\tau^i}<\infty \}$. First note that $ \{A^{D^i,\lambda^i}_{\tau^i}<\infty \}=  \{A^{D^i,\lambda^i}_{\tau^i} = E_i\}$ by continuity of $[0,\tau^{D^i}) \ni t\mapsto A^{D^i,\lambda^i}_{t}$ and $\tau^{D^i}= \inf\{t \ge 0: A^{D^i,\lambda^i}_t = \infty\}$. Thus $X_{\tau^i} \in D^i$. Next we show $X_{\tau^i}\in \supp(\lambda^i)$. Fix some $\omega \in \{X_{\tau^i}\not\in \supp(\lambda^i)\}$. Now, since $\supp(\lambda^i)$ is closed, there is some $\eps'>0$ such that $X_{\tau^i-t}(\omega)\not \in \supp(\lambda^i)$ for all $t\in [0,\eps']$. This means $\theta_{\tau^i-\eps'}\circ l^y_{\eps'}(\omega)=0$ for all $y\in \supp(\lambda^i)$. This yields
    \begin{align*}
        &E_i(\omega)=A^{D^i,\lambda^i}_{\tau^i(\omega)}(\omega)
        = \int l^y_{\tau^i(\omega)}(\omega)\lambda^i(dy) 
        =  \int l^y_{\tau^i(\omega)-\eps'}(\omega)+ \theta_{\tau^i-\eps'}\circ l^y_{\eps'}(\omega) \lambda^i(dy) \\
        =& \int l^y_{\tau^i(\omega)-\eps'} \lambda^i(dy) + \int \theta_{\tau^i-\eps'}\circ l^y_{\eps'}(\omega) \lambda^i(dy)
        = \int l^y_{\tau^i(\omega)-\eps'} \lambda^i(dy) 
        = A^{D^i,\lambda^i}_{\tau^i(\omega)-\eps'}(\omega),
    \end{align*} which contradicts $\tau^i= \inf\{t \ge 0 :A^{D^i,\lambda^i}_t \ge E_i\}$. Since $y\mapsto\theta_{\tau^i} \circ l^y_{\eps}$ is $\PR_x$-a.s.\ bounded away from $0$ in every neighborhood of $X_{\tau^i}\in\supp(\lambda^i)$ and $\lambda^i$ puts positive weight on a neighborhood of very such point, we get
    \begin{align*}
        A^{D^i,\lambda^i}_{\tau^i+\eps} 
        \ge \int  l^y_{\tau^i+\eps}\lambda^i(dy)  
        =  \int  l^y_{\tau^i} \lambda^i(dy) + \int\inf \theta_{\tau^i} \circ l^y_{\eps} d \lambda^i(dy)
        >\int  l^y_{\tau^i} \lambda^i(dy) 
        = A^{D^i,\lambda^i}_{\tau^i} = E_i
    \end{align*}$\PR_x$-a.s.\ as claimed.

    \eqref{aux4_3} W.l.o.g.\ we show the claim for $\PR_x$-a.a.\ $\{t>\tau^{D^i}\} \cap \{ \eta_{L^i} < \eta_{R^i}\}$. The set $\{t>\tau^{D^i}\} \cap \{ \eta_{L^i} < \eta_{R^i}\}$ can be treated analogously. For every $\eps>0$ we have 
    \begin{align*}
        c_{k}:=\max \left\{\vert L^i_{n_k}-L^i \vert , \frac{1}{\lambda^i_{n_k}((L^i-\eps, L^i+\eps) \cap D_{n_k}^i)} \right\} \stackrel{k \to \infty}{\longrightarrow}0.
    \end{align*} If this were not the case we could find a subsequence $(c_{k_j})_{j \in \N}$ that is bounded away from 0. Now we could repeat the arguments from Lemma \ref{aux2} for the subsequence $(\lambda^i_{n_{k_j}})_{j \in \N}$ in place of the original sequence $(\lambda^i_n)_{n\in \N}$ to reach a contradiction. Keep in mind, that for this argument $L^i$ would remain the same as it only depends on $\mu$ (which is unchanged when passing on to a subsequence), whereas $\Tilde{L}^i$ would have to be replaced by $\limsup_{j\to\infty} L^i_{n_{k_j}}<L^i$.
    
    Now we choose some nullset $N_2'$ such that for all $\omega\in (\{t>\tau^{D^i}\} \cap \{ \eta_{L^i} < \eta_{R^i}\})\setminus N_2'$ there is some $\eps>0$ with $\inf_{y\in (L^i-\eps,L^i+\eps)} \theta_{\tau^{D^i}}\circ l^y_{t-\tau^{D^i}(\omega)}(\omega)>0$. Additionally, we choose a nullset $N_2''$ such that $\eta_{y}(\omega) \to \eta_{L^i}(\omega)$ for all $\omega\in \Omega \setminus N_2''$ and set $_2:= N_2'\cup N_2''$. Now for all $\omega\in(\{t>\tau^{D^i}\} \cap \{ \eta_{L^i} < \eta_{R^i}\})\setminus N_2( \supset \{ \tau^{D^i}=\eta_{L^i}\})$ there is an $\eps >0$ such that
    \begin{align*}
        &\max \left\{  (\tau^{D^i_{n_k}}(\omega)-\tau^{D^i}(\omega))_+  , \frac{1}{\int \theta_{\tau^{D^i}}\circ l^y_{t-\tau^{D^i}(\omega)}(\omega)d \lambda^i_{n_k}}  \right\}\\ 
        \le& \max \left\{ \vert \eta_{L^i_{n_k}}(\omega)-\eta_{L^i}(\omega) \vert , \frac{1}{\inf_{y\in (L^i-\eps,L^i+\eps)} \theta_{\tau^{D^i}}\circ l^y_{t-\tau^{D^i}(\omega)}(\omega) \lambda^i_{n_k}((L^i-\eps,L^i+\eps)\cap D^i_{n_k})} \right\}\\
        &\stackrel{k \to \infty}{\longrightarrow}0.
    \end{align*} This means, given any $C>0$, putting these things together there is some $K\in \N$ such that $(\tau^{D^i_{n_k}}(\omega)-\tau^{D^i}(\omega))_+<t-\tau^{D^i}(\omega)$ (i.e.\ $t\ge \tau^{D^i_{n_k}}(\omega)$) or $\int \theta_{\tau^{D^i}}\circ l^y_{t-\tau^{D^i}(\omega)}(\omega)d \lambda^i_{n_k}\ge C$ for all $k \ge K$. Thus, for the same $\omega$ and $k\ge K$
    \begin{align*}
         &A^{D^i_{n_k},\lambda^i_{n_k}}_t(\omega) 
         = \int l^y_{t}(\omega) \lambda^i_{n_k}(dy) + \infty \ind{ t \ge \tau^{D^i_{n_k}}(\omega)}\\
         =& \int \theta_{\tau^{D^i}}\circ l^y_{t-\tau^{D^i}(\omega)}(\omega)d \lambda^i_{n_k} + \int l^y_{\tau^{D^i(\omega)}} (\omega)\lambda^i_{n_k}(dy) + \infty \ind{ t \ge \tau^{D^i_{n_k}}(\omega)}\\
         \ge&\int l^y_{\tau^{D^i(\omega)}} (\omega)\lambda^i_{n_k}(dy) + \infty \ind{ t \ge \tau^{D^i_{n_k}}(\omega)} \ge C.
    \end{align*} 
    
    \eqref{aux4_4} Let $\eps>0$. By definition, $A^{D^i,\lambda^i}_{(\tau^i(\omega)-\eps)\vee 0}(\omega) < E_i(\omega)$ for all $\omega \in \Omega\setminus N'$ with a $\PR_x$-nullset $N'$. By \eqref{aux4_1}, $A^{D^i_{n_k},\lambda^i_{n_k}}_{\tau^i(\omega)-\eps}<E_i(\omega)$ for all $\omega\in \Omega\setminus(N'\cup N_1)$ and sufficiently large $k\in \N$, so $\tau^i_{n_k}(\omega)>\tau^i(\omega)-\eps$ for these $k$ and $\omega$. By \eqref{aux4_2}, there is a $\PR_x$-nullset $N''$ such that $A^{D^i,\lambda^i}_{\tau^i(\omega)+ \eps}(\omega) > E_i(\omega)$ for all $\omega\in \Omega \setminus N''$. By \eqref{aux4_1} $A^{D^i_{n_k},\lambda^i_{n_k}}_{\tau^i(\omega)-\eps}>E_i(\omega)$ for all $\omega\in \{\tau^i+\eps < \tau^{D^i}\} \setminus(N''\cup N_1)$ and sufficiently large $k\in \N$, so $\tau^i_{n_k}(\omega)\le \tau^i(\omega)+\eps$ for these $k$ and $\omega$. By \eqref{aux4_3} we have the same conclusion for all $\omega \in \{\tau^i+\eps > \tau^{D^i}\} \setminus(N''\cup N_2)$. If $\omega \in \{\tau^i+\eps = \tau^{D^i}\}$ we replace $\eps$ by any $\eps'<\eps$. Sending $\eps \searrow 0$ along any fixed sequence gives $\tau^i_{n_k} \to \tau^i$ outside the union of all the countably many exception sets associated with the chosen sequence.
\end{proof}
    
    \begin{proof}(of Proposition \ref{comp})\\
    As described above, weak compactness holds by Prokhorov's theorem. Relative closeness holds by putting the previous pieces together:
        By Lemma \ref{aux3} \eqref{3} and Lemma \ref{aux4} \eqref{aux4_4} there is an indexing sequence $(n_k)_{k\in\N}$ such that $(\tau^1_{n_k}, \tau^2_{n_k}) \stackrel{k\to\infty}{\longrightarrow}(\tau^1,\tau^2)$ $\PR_x$-a.s. By path continuity of $X$ we obtain $(\tau^1_{n_k},X_{\tau^1_{n_k}},\tau^2_{n_k},X_{\tau^2_{n_k}})  \stackrel{k\to\infty}{\longrightarrow} (\tau^1,X_{\tau^1},\tau^2,X_{\tau^2}) $ $\PR_x$-a.s. In particular, $\PR_x^{(\tau^1_{n_k},X_{\tau^1_{n_k}},\tau^2_{n_k},X_{\tau^2_{n_k}})} \stackrel{k\to\infty}{\longrightarrow} \PR_x^{(\tau^1,X_{\tau^1},\tau^2,X_{\tau^2})}$. By uniqueness of weak limits we infer $\mu =  \PR_x^{(\tau^1,X_{\tau^1},\tau^2,X_{\tau^2})}$.
    \end{proof}

\subsection{$\mathcal{M}^1$-product topology}
    The $\mathcal{M}^1_x$-topology already has good properties. However, since we are interested in Markov-perfect equilibria, the consideration for a fixed $x$ is not sufficient. We must therefore find a suitable notions of convergence that takes into account all possible initial points at once.

\begin{definition}($\mathcal{M}^1$-product topology)\\
    Let $U\subset I$ open. Let $\kappa: \mathcal{Z}^U(E_1)\times \mathcal{Z}^U(E_2) \to \bigtimes_{x\in U\cap \mathbb{Q}} \mathcal{M}^1(([0,\infty]\times \overline{U})^2)$ be given by
    \begin{align*}
        (\tau^{D_1,\lambda_1}, \tau^{D_2,\lambda_2}) \mapsto (\kappa_x(\tau^{D_1,\lambda_1}, \tau^{D_2,\lambda_2}))_{x\in U \cap \mathbb Q}.
    \end{align*} We define the \textit{$\mathcal{M}^1$-product topology} as the pullback topology on $\mathcal{Z}^U(E_1)\times \mathcal{Z}^U(E_2)$ under the mapping $\kappa$.
\end{definition}

The consideration of rational starting points has the technical background that we thus obtain a countable product space. The following two results ensure that this is not a significant limitation. 

\begin{lemma}\label{equicont}
    Let $U \subset I$ open with compact closure $\overline{U}\subset I$ (closure taken in $\R$), $ ((\tau^1_n,\tau^2_n))_{n\in \N}=((\tau^{D^1_n,\lambda^1_n},\tau^{D^2_n,\lambda^2_n}))_{n\in \N}$ a sequence in $\mathcal{Z}^U(E_1)\times \mathcal{Z}^U(E_2)$, $(\tau^1,\tau^2)=(\tau^{D^1,\lambda^1},\tau^{D^2,\lambda^2})\in\mathcal{Z}^U(E_1)\times \mathcal{Z}^U(E_2)$ such that $(\tau^1_n,\tau^2_n)\stackrel{n \to \infty}{\longrightarrow}(\tau^1,\tau^2) $ in the $\mathcal{M}^1$-product topology and let $f:[0,\infty]\times I\times[0,\infty]\times I\to\R$ be bounded and uniformly continuous. Then, the sequence $\varphi = (\varphi_n)_{n\in \N}$ of mappings 
    \begin{align*}
       \varphi_n:\,U\to \R,\;\; x\mapsto \Ex_x[f(\tau^1_n,X_{\tau^1_n},\tau^2_n,X_{\tau^2_n})] = \int f d \PR_x^{(\tau^1_n,X_{\tau^1_n},\tau^2_n,X_{\tau^2_n})}
    \end{align*} is equicontinuous.
\end{lemma}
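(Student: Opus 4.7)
The plan is to establish pointwise equicontinuity at each $x\in U$ via a strong-Markov decomposition at the first hitting time of a nearby point $y$, leveraging the memorylessness of the exponential clocks $E_1,E_2$ and the $\mathcal{M}^1$-product convergence of $(\tau^1_n,\tau^2_n)$.

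Fix $x,y\in U$ and set $\eta:=\eta_y$ and $M_n:=\tau^1_n\wedge\tau^2_n$. On the event $\{\eta\le M_n\}$ the Markovian shift property (noted after the definition of $\mathcal{Z}$), combined with memorylessness of $E_1,E_2$ (so that the residuals $E_i-A^{\tau^i_n}_\eta$ are i.i.d.\ $\Exp(1)$ conditionally on $\{E_i>A^{\tau^i_n}_\eta\}\cap\f_\eta$), yields
\begin{align*}
    \Ex_x\bigl[f(\tau^1_n,X_{\tau^1_n},\tau^2_n,X_{\tau^2_n})\ind{\eta\le M_n}\bigr]=\Ex_x\bigl[\ind{\eta\le M_n}\Psi_n(\eta,y)\bigr],
\end{align*}
with $\Psi_n(s,y):=\Ex_y[f(s+\tau^1_n,X_{\tau^1_n},s+\tau^2_n,X_{\tau^2_n})]$. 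Uniform continuity of $f$ supplies a modulus $\omega_f$ independent of $n$ and $y$ with $|\Psi_n(s,y)-\varphi_n(y)|\le\omega_f(s)$. Together with the trivial bound $\|f\|_\infty$ on the complementary event $\{\eta>M_n\}$ one obtains
\begin{align*}
    |\varphi_n(x)-\varphi_n(y)|\le 2\|f\|_\infty\PR_x(M_n<\eta_y)+\Ex_x[\omega_f(\eta_y)].
\end{align*}

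The second summand vanishes as $y\to x$ by regularity of the diffusion (so $\eta_y\to 0$ $\PR_x$-a.s.) and dominated convergence, uniformly in $n$. What remains is to show
\begin{align*}
    \lim_{y\to x}\sup_{n\in\N}\PR_x(M_n<\eta_y)=0.
\end{align*}
Writing
\begin{align*}
\PR_x(M_n>t)=\Ex_x\Bigl[\ind{t<\tau^{D^1_n}\wedge\tau^{D^2_n}}\exp\Bigl(-\sum_{i=1,2}\int l^z_t\,\lambda^i_n(dz)\Bigr)\Bigr],
\end{align*}
this reduces to showing, uniformly in $n$, that both $\PR_x(\tau^{D^1_n}\wedge\tau^{D^2_n}\le\eta_y)$ is small and the integrated local times $\int l^z_{\eta_y}\,\lambda^i_n(dz)$ are small with high probability.

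The main obstacle is precisely this uniform-in-$n$ smallness: for each individual $n$ with $x\in D^1_n\cap D^2_n$ the bound is clear from continuity of $l^{\bigcdot}_{\bigcdot}$ and $l^z_0\equiv 0$, but the measures $\lambda^i_n$ may concentrate sharply near $x$ as $n$ grows. Here the $\mathcal{M}^1_x$-topology convergence is indispensable: for $x\in U\cap\mathbb{Q}$, weak convergence of $\PR_x^{(\tau^1_n,X_{\tau^1_n},\tau^2_n,X_{\tau^2_n})}$ allows one to pick a continuity point $T>0$ of the limit law of $M:=\tau^1\wedge\tau^2$ with $\PR_x(M\le T)$ arbitrarily small, to bound $\sup_{n\ge N}\PR_x(M_n\le T)$ via the Portmanteau theorem, and to treat the remaining finitely many indices individually; combined with $\PR_x(\eta_y>T)\to 0$ this delivers the uniform decay. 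Degenerate cases where $x\notin D^i_n$ (forcing $\tau^i_n=0$ under $\PR_x$) are handled by a separate neighborhood argument, exploiting that for $y$ near such $x$ the stopping time $\tau^i_n(y)$ is itself forced to be small by the quick exit from $D^i_n$ through its boundary near $x$, so that the uniform continuity of $f$ in its space variables again controls $|\varphi_n(x)-\varphi_n(y)|$. Irrational $x\in U$ are finally absorbed by density of $U\cap\mathbb{Q}$ together with the uniform bounds already established at rational points.
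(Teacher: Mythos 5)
Your overall architecture---restart at a hitting time of a nearby point, use the memorylessness of $E_1,E_2$ together with the Markov property of $X$, control the error by the uniform continuity of $f$ plus a uniform-in-$n$ bound on the probability that some player stops before the hitting time---is the same as the paper's. The difference lies in how that uniform-in-$n$ bound is produced. The paper extracts from the compactness analysis (Lemma \ref{aux3} \eqref{1b}) a neighborhood $B'$ of $x$ with $B'\subset\bigcap_n(D^1_n\cap D^2_n)$ and $\Lambda:=\sup_{i,n}\lambda^i_n(B')<\infty$, which yields the deterministic estimate $A^{\tau^i_n}_{\sigma}\le\Lambda\sigma$ for $\sigma=\eta_x\wedge\tau^{B}\wedge\delta$ and hence $\PR_y(\tau^1_n\wedge\tau^2_n>\sigma)\ge\Ex_y[e^{-2\Lambda\sigma}]\to1$ as $y\to x$, uniformly in $n$ and valid for every starting point. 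You instead argue distributionally, via Portmanteau applied to the laws of $M_n=\tau^1_n\wedge\tau^2_n$ under $\PR_x$, and this creates two genuine gaps.

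First, your scheme needs a $T>0$ with $\PR_x(M\le T)$ small, i.e.\ $\PR_x(\tau^1\wedge\tau^2=0)=0$, i.e.\ $x\in D^1\cap D^2$ for the \emph{limit}. You identify the exceptional case as ``$x\notin D^i_n$'', but by Lemma \ref{aux2} one can have $x\notin D^i$ while $x\in D^i_n$ for every $n$: it suffices that $\liminf_n\lambda^i_n((x-\rho,x+\rho)\cap D^i_n)=\infty$ for all $\rho>0$, with $\Tilde{L}^i<x<\Tilde{R}^i$. In that scenario $M=0$ $\PR_x$-a.s., so no admissible $T$ exists, and your fallback (``quick exit from $D^i_n$ through its boundary near $x$'') does not apply because $\partial D^i_n$ stays away from $x$. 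One would have to show, quantitatively and uniformly over large $n$ and over $y$ near $x$, that the exploding mass forces $\tau^i_n$ to be small in time with $X_{\tau^i_n}$ close to $x$; that argument is missing. Second, the Portmanteau step is only available at $x\in U\cap\mathbb Q$, since the $\mathcal{M}^1$-product topology only encodes the laws at rational starting points (and Proposition \ref{weakconv}, which would give weak convergence everywhere, is itself deduced \emph{from} the present lemma, so it cannot be invoked). ``Absorbed by density'' does not close this: equicontinuity at each rational $q$ with a modulus depending on $q$ in an uncontrolled way does not imply equicontinuity at an irrational $x$. The paper's route avoids both issues because the bound $\Lambda$ is a bound on measures over a fixed neighborhood, independent of $n$ and of the starting point, and because its case distinction is organized by the position of $x$ relative to the limit sets $D^1,D^2$ rather than the approximating ones.
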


\begin{proof}
    Let $x\in U$, $\eps>0$ and $\delta>0$ such that $\vert f(t_1,x_1,t_2,x_2)- f(s_1,y_1,s_2,y_2)\vert <\eps$ for all $(t_1,x_1,t_2,x_2),(s_1,y_1,s_2,y_2)\in[0,\infty]\times I\times[0,\infty]\times I $ with $\vert t_1-s_1\vert,\vert t_2-s_2\vert,\vert x_1-y_1\vert,\vert x_2-y_2 \vert\le\delta$. Set $B=(x-\frac{\delta}{2},x+\frac{\delta}{2})$ and $S:=\sup \vert f\vert $.

    By our standing assumptions on $I$, the point $x$ is non-singular in the sense of \cite[Section 3.4.]{ito1974diffusion}. By \cite[Section 4.4.]{ito1974diffusion} $\PR_y( \eta_x<\infty) \ge\PR_y( \eta_x< \tau^B)\stackrel{y\to x}{\longrightarrow}1$. Thus by \cite[Section 3.3.(10c)]{ito1974diffusion} $\PR_y( \eta_x< \delta)\stackrel{y\to x}{\longrightarrow}1$. For all $\tau^{D,\lambda}\in \mathcal{Z}^U$ with $x\not \in D$ this yields
    \begin{align}
        \PR_y(\tau^{D,\lambda}\le \tau^B\wedge \delta)\ge \PR_y(\eta_x< \tau^B\wedge \delta) \stackrel{y\to x}{\longrightarrow}1.\label{hr1}
    \end{align}
    In case $x\not\in D^1\cup D^2$ we infer
    \begin{align*}
        &\vert \Ex_x[f(\tau^1,X_{\tau^1},\tau^2,X_{\tau^2})] -\Ex_y[f(\tau^1,X_{\tau^1},\tau^2,X_{\tau^2})] \vert\\
        \le& \vert f(0,x,0,x) -\Ex_y[ \ind{\tau^1\vee \tau^2 \le \tau^B\wedge \delta} f(\tau^1,X_{\tau^1},\tau^2,X_{\tau^2})] \vert + S \PR_y(\tau^1\vee \tau^2 > \tau^B\wedge \delta)\\
        \le& \vert f(0,x,0,x)\vert (1 -\PR_y(\tau^1\vee \tau^2 > \tau^B\wedge \delta)) + \eps + S \PR_y(\tau^1\vee \tau^2 > \tau^B\wedge \delta) \stackrel{y\to x}{\longrightarrow}\eps
    \end{align*} independent from $\tau^1,\tau^2$. I.e.\ provided $x\not\in  D^1_n\cup D^2_n$ the same estimate can be repeated for $\tau^1_n,\tau^2_n$ for every $n$.
    
    Now assume $x\in D^1\cap D^2$. By Lemma \ref{aux3} \eqref{1b} we may assume that there is neighborhood $B'\subset D^1\cap D^2$ of $x$ such that $B'\subset \bigcap_{n\in \N}( D^1_n\cap D^2_n)$ and $\Lambda:=\sup_{i\in \{1,2\}, n\in \N} \lambda^i_n(B')<\infty$. W.l.o.g.\ let $B'=B$. Now for $i=1,2$
    \begin{align*}
        A^{\tau^i_n}_{\eta_x\wedge \tau^B \wedge \delta} 
        =A^{D^i_n,\lambda^i_n}_{\eta_x\wedge \tau^B \wedge \delta} 
        = \int l^{y}_{\eta_x\wedge \tau^B \wedge \delta} d\lambda^i_n(y) 
        \le \int \eta_x\wedge \tau^B \wedge \delta d\lambda^i_n(y) 
        \le \Lambda (\eta_x\wedge \tau^B \wedge \delta).
    \end{align*} and so
    \begin{align}
        &\PR_y(\tau^1_n\wedge \tau^2_n > \eta_x\wedge \tau^B\wedge \delta) 
        = \PR_y (A^{\tau^1_n}_{\eta_x\wedge \tau^B \wedge \delta} < E_1 \,\text{ and }\, A^{\tau^2_n}_{\eta_x\wedge \tau^B \wedge \delta} <E_2)\notag\\
        \ge & \PR_y(\Lambda (\eta_x\wedge \tau^B \wedge \delta) < E_1\wedge E_2)
        = \Ex_y(e^{-2\Lambda (\eta_x\wedge \tau^B \wedge \delta)})\stackrel{y\to x}{\to}1\label{hr2}
    \end{align} independent from $n$. To abbreviate, let $\sigma:=  \eta_x\wedge \tau^B\wedge\delta$. Using that $\tau^1_n$ and $\tau^2_n$ are Markovian, we obtain
    \begin{align*}
        &\vert \Ex_x[f(\tau^1_n,X_{\tau^1_n},\tau^2_n,X_{\tau^2_n})]- \Ex_y[f(\tau^1_n,X_{\tau^1_n},\tau^2_n,X_{\tau^2_n})] \vert \\
        \le& \vert \Ex_x[f(\tau^1_n,X_{\tau^1_n},\tau^2_n,X_{\tau^2_n})] -\Ex_y[\ind{\tau^1_n\wedge \tau^2_n > \sigma}f(\tau^1_n,X_{\tau^1_n},\tau^2_n,X_{\tau^2_n}) ]\vert + S\Ex_y(\ind{\tau^1_n\wedge \tau^2_n \le \sigma})\\
        \le& \vert \Ex_x[f(\tau^1_n,X_{\tau^1_n},\tau^2_n,X_{\tau^2_n})] - \Ex_y[\ind{\tau^1_n\wedge \tau^2_n > \sigma}f(\theta_\sigma\circ\tau^1_n+\sigma,X_{\theta_\sigma\circ\tau^1_n+\sigma},\theta_\sigma\circ\tau^2_n+\sigma,X_{\theta_\sigma\circ\tau^2_n+\sigma}) ]\vert \\
        &+ S (1-\Ex_y(e^{-2\Lambda\sigma}))\\
        \le& \vert \Ex_x[f(\tau^1_n,X_{\tau^1_n},\tau^2_n,X_{\tau^2_n})] - \Ex_y[\ind{\tau^1_n\wedge \tau^2_n > \sigma}f(\theta_\sigma\circ\tau^1_n,X_{\theta_\sigma\circ\tau^1_n+\sigma},\theta_\sigma\circ\tau^2_n,X_{\theta_\sigma\circ\tau^2_n+\sigma}) ]\vert + \eps \\
        &+S (1-\Ex_y(e^{-2\Lambda\sigma}))\\
        =&\vert \Ex_x[f(\tau^1_n,X_{\tau^1_n},\tau^2_n,X_{\tau^2_n})] - \Ex_y[\ind{\tau^1_n\wedge \tau^2_n > \sigma}\Ex_{X_\sigma}[f(\tau^1_n,X_{\tau^1_n},\tau^2_n,X_{\tau^2_n})]]\vert + \eps +S (1-\Ex_y(e^{-2\Lambda\sigma}))\\
        =&\vert \Ex_x[f(\tau^1_n,X_{\tau^1_n},\tau^2_n,X_{\tau^2_n})] - \Ex_y[\ind{\tau^1_n\wedge \tau^2_n > \sigma}\ind{\eta_x=\sigma}]\Ex_{x}[f(\tau^1_n,X_{\tau^1_n},\tau^2_n,X_{\tau^2_n})]\vert + S\PR_y(\eta_x>\sigma)+ \eps \\
        &+S (1-\Ex_y(e^{-2\Lambda\sigma}))\\
        \le& \vert \Ex_x[f(\tau^1_n,X_{\tau^1_n},\tau^2_n,X_{\tau^2_n})]\vert (1-\Ex_y[\ind{\eta_x=\sigma}e^{-2\Lambda\sigma}]) + S\PR_y(\eta_x>\sigma)+ \eps +S (1-\Ex_y(e^{-2\Lambda\sigma}))\\
        &\stackrel{y\to x}{\to}\eps
    \end{align*} independent from $n$ by \eqref{hr1} and \eqref{hr2}.
For the cases $x\in D_1\setminus D^2$ and $x\in D^2\setminus D^1$ we can make essentially the same estimate combining the arguments from the cases $x\not \in D^1\cup D^2$ and $x\in D^1\cap D^2$. Sending $\eps \searrow 0$ we infer the claim.
\end{proof}

\begin{proposition}\label{weakconv}
    Let $U \subset I$ open with compact closure $\overline{U}\subset I$ (closure taken in $\R$), $((\tau^1_n,\tau^2_n))_{n\in \N}$ a sequence in $ \mathcal{Z}^U(E_1)\times \mathcal{Z}^U(E_2)$ and $(\tau^1,\tau^2)\in\mathcal{Z}^U(E_1)\times \mathcal{Z}^U(E_2)$ such that $(\tau^1_n,\tau^2_n)\stackrel{n \to \infty}{\longrightarrow}(\tau^1,\tau^2) $ in the $\mathcal{M}^1$-product topology. Then, 
\begin{align*}
    \PR_x^{(\tau^1_n,X_{\tau^1_n},\tau^2_n,X_{\tau^2_n})}\stackrel{n \to \infty}{\longrightarrow} \PR_x^{(\tau^1,X_{\tau^1},\tau^2,X_{\tau^2})}
\end{align*} weakly for all $x\in I$.
\end{proposition}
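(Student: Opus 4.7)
The plan is to reduce the claim via the Portmanteau theorem to pointwise convergence of certain expectations, and then to upgrade the convergence at rational starting points (which is exactly the content of the $\mathcal{M}^1$-product topology) to every $x\in I$ using the equicontinuity supplied by Lemma \ref{equicont}.

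First, I would fix $x\in I$ and observe that because $D^i, D^i_n \subset U$ and the process stopped at any $\tau^{D,\lambda}\in \mathcal{Z}^U$ almost surely lands in $\overline{U}$, every joint distribution in sight is supported on the compact metric space $([0,\infty]\times \overline{U})^2$. On such a space, weak convergence is equivalent to convergence of integrals against bounded uniformly continuous test functions, so it suffices to show that for each such $f$ the maps $\varphi_n(y):=\Ex_y[f(\tau^1_n,X_{\tau^1_n},\tau^2_n,X_{\tau^2_n})]$ converge at $x$ to $\varphi(y):=\Ex_y[f(\tau^1,X_{\tau^1},\tau^2,X_{\tau^2})]$. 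The case $x\in I\setminus U$ is trivial: $x\notin D^i, D^i_n$ forces $A_0^{\tau}=\infty$, hence all four stopping times vanish $\PR_x$-a.s., yielding equal Dirac masses $\delta_{(0,x,0,x)}$. For $x\in U$, the definition of the $\mathcal{M}^1$-product topology as a pullback under the Prokhorov metric at each rational starting point immediately gives $\varphi_n(q)\to \varphi(q)$ for every $q\in U\cap \mathbb{Q}$.

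The main step, and the only genuine difficulty, is extending this pointwise convergence from rational to arbitrary $x\in U$; this is where Lemma \ref{equicont} earns its keep. Applied to our sequence, it yields equicontinuity of $(\varphi_n)_{n\in\N}$ on $U$, and applied to the constant sequence $(\tau^1_n,\tau^2_n)\equiv (\tau^1,\tau^2)$ it also certifies continuity of the limit candidate $\varphi$. The standard three-term estimate
\begin{align*}
|\varphi_n(x)-\varphi(x)| \le |\varphi_n(x)-\varphi_n(q)|+|\varphi_n(q)-\varphi(q)|+|\varphi(q)-\varphi(x)|,
\end{align*}
with $q\in U\cap\mathbb{Q}$ chosen within a common equicontinuity radius of $x$, then makes the outer two terms uniformly small (independently of $n$) while the middle term vanishes as $n\to\infty$ by the rational-point convergence. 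Since $f$ was arbitrary, Portmanteau closes the argument and yields the claimed weak convergence for every $x\in I$.
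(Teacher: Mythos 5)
Your proposal is correct and follows essentially the same route as the paper's proof: the trivial case $x\in I\setminus U$, the rational points by definition of the topology, and the three-term triangle inequality controlled by the equicontinuity from Lemma \ref{equicont} (with continuity of the limit obtained by applying the same lemma to the constant sequence), finishing via the characterization of weak convergence through bounded uniformly continuous test functions.
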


\begin{proof}
    For $x \in U\cap \mathbb Q$, the claim is immediate by definition. For $x\in I \setminus U$ 
    \begin{align*}
        \PR_x^{(\tau^1_n,X_{\tau^1_n},\tau^2_n,X_{\tau^2_n})} =\delta_0\otimes \delta_x\otimes \delta_0 \otimes \delta_x= \PR_x^{(\tau^1,X_{\tau^1},\tau^2,X_{\tau^2})}
    \end{align*} so the claim is once again trivial. Let $x\in U\setminus \mathbb Q$ and $f:[0,\infty]\times I\times[0,\infty]\times I\to\R$ bounded and uniformly continuous. For all $y\in U\cap \mathbb Q$ we have
    \begin{align*}
        &\bigg\vert \int f d\PR_x^{(\tau^1_n,X_{\tau^1_n},\tau^2_n,X_{\tau^2_n})} - \int f d \PR_x^{(\tau^1,X_{\tau^1},\tau^2,X_{\tau^2})}\bigg\vert \\
        \le &\bigg\vert \int f d\PR_x^{(\tau^1_n,X_{\tau^1_n},\tau^2_n,X_{\tau^2_n})} - \int f d\PR_y^{(\tau^1_n,X_{\tau^1_n},\tau^2_n,X_{\tau^2_n})} \bigg\vert \\
        &+ \bigg\vert  \int f d\PR_y^{(\tau^1_n,X_{\tau^1_n},\tau^2_n,X_{\tau^2_n})} - \int f d \PR_y^{(\tau^1,X_{\tau^1},\tau^2,X_{\tau^2})}\bigg\vert \\
        &+ \bigg\vert \int f d \PR_y^{(\tau^1,X_{\tau^1},\tau^2,X_{\tau^2})}- \int f d \PR_x^{(\tau^1,X_{\tau^1},\tau^2,X_{\tau^2})}\bigg\vert.
    \end{align*} By the above the middle term on the right hand side converges to 0 for $n\to \infty$. By Lemma \ref{equicont} the first and third term converge to 0 for $y\to x$ independent from $n$. By \cite[Theorem 1.1.1]{stroock2007multidimensional} this proves the claim.
\end{proof}

We come to the main result of this section.
\begin{theorem}\label{comp2}
    Let $U \subset I$ open with compact closure $\overline{U}\subset I$ (closure taken in $\R$). Then, $\mathcal{Z}^U(E_1)\times \mathcal{Z}^U(E_2)$ equipped with the $\mathcal{M}^1$-product topology is a compact topological space.
\end{theorem}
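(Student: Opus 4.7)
The plan is to establish sequential compactness, which is enough here: the $\mathcal{M}^1$-product topology is, by definition, the pullback under $\kappa$ of the product topology on the countable product $\bigtimes_{x\in U\cap\mathbb Q}\mathcal{M}^1(([0,\infty]\times\overline U)^2)$. Each factor is a compact metric space (Prokhorov on a compact Polish space), so the product is metrizable and first-countable, and so is the pullback. Hence it suffices to show that every sequence $((\tau^1_n,\tau^2_n))_{n\in\mathbb N}\subset\mathcal Z^U(E_1)\times\mathcal Z^U(E_2)$ admits a subsequence that converges in the $\mathcal{M}^1$-product topology to some element of $\mathcal Z^U(E_1)\times\mathcal Z^U(E_2)$.

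Given such a sequence, I fix an arbitrary reference point $x_0\in U$ and apply Proposition \ref{comp} at $x_0$. This yields a subsequence (still denoted $(\tau^1_n,\tau^2_n)$) and a pair $(\tau^1,\tau^2)=(\tau^{D^1,\lambda^1},\tau^{D^2,\lambda^2})\in\mathcal Z^U(E_1)\times\mathcal Z^U(E_2)$ such that $\kappa_{x_0}(\tau^1_n,\tau^2_n)\to\kappa_{x_0}(\tau^1,\tau^2)$. In fact, passing to the concrete construction underlying Proposition \ref{comp} (Lemma \ref{aux3}(iii) and Lemma \ref{aux4}(iv)) one obtains the stronger statement $\tau^i_n(\omega)\to\tau^i(\omega)$ and $X_{\tau^i_n}(\omega)\to X_{\tau^i}(\omega)$ for $\omega$ outside an exceptional set.

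The decisive step is to promote this convergence from ``$\PR_{x_0}$-a.s.'' to ``$\PR_x$-a.s.\ for every $x\in U$''. To do so, I would revisit the exceptional events appearing in Lemma \ref{aux4}: the nullset $N_1'$ where $\tau^{(L^i+\eps,R^i-\eps)}\not\to\tau^{D^i}$, the nullset $N_1''$ where $l^{\cdot}_{t\wedge\tau^{(L^i+\eps,R^i-\eps)}}(\omega)$ fails to lie in $\ccc_{\cpt}((L^i,R^i))$, the set where $\inf_{y}\theta_{\tau^{D^i}}\circ l^y_{t-\tau^{D^i}}=0$, and analogous sets controlling positivity of local time and continuity of first entry times $\eta_y$. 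All of these are defined via universal path properties of the regular diffusion $X$ and therefore have $\PR_x$-measure zero for \emph{every} $x\in U$. The vague convergence $\Tilde\lambda^i_{n_k,k}\to\lambda^i$ used to pass to the limit in $\int l^y_\cdot(\omega)\,d\Tilde\lambda^i_{n_k,k}$ is a deterministic statement about measures independent of $x$, so it enters unchanged. Consequently, $\tau^i_n\to\tau^i$ holds $\PR_x$-a.s.\ for each $x\in U$, and by continuity of paths $X_{\tau^i_n}\to X_{\tau^i}$ $\PR_x$-a.s. Bounded convergence then yields $\kappa_x(\tau^1_n,\tau^2_n)\to\kappa_x(\tau^1,\tau^2)$ weakly for every $x\in U$; for $x\in I\setminus U$ both measures equal $\delta_0\otimes\delta_x\otimes\delta_0\otimes\delta_x$, so convergence is trivial. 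In particular, convergence holds at every $x\in U\cap\mathbb Q$, which is exactly convergence in the $\mathcal{M}^1$-product topology.

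The main obstacle is precisely the upgrade in the last paragraph: one has to verify that the $x_0$-dependent nullsets from the proof of Proposition \ref{comp} can be taken to be $\PR_x$-null simultaneously for all $x\in U$. This is a careful but routine reinspection of the arguments in Lemmas \ref{aux3} and \ref{aux4}, exploiting the fact that every ingredient (continuity, non-degeneracy and support properties of the local time, continuity of the exit time functional $\eps\mapsto\tau^{(L+\eps,R-\eps)}$, and vague convergence of the rate measures) is either pathwise deterministic or an a.s.-property of the diffusion that is insensitive to the starting point.
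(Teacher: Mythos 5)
Your reduction to sequential compactness and the use of Proposition \ref{comp} as the engine are in the right spirit, but the decisive step of your argument --- ``promote the convergence from $\PR_{x_0}$-a.s.\ to $\PR_x$-a.s.\ for every $x$ by inspecting nullsets'' --- does not work, and it misses what is actually the main difficulty of the theorem. The pair $(\tau^1,\tau^2)=(\tau^{D^1,\lambda^1},\tau^{D^2,\lambda^2})$ produced by Proposition \ref{comp} at the reference point $x_0$ is a purely \emph{local} object: by construction (Lemmas \ref{aux1} and \ref{aux3}), $D^i=(L^i,R^i)$ is a single interval, namely the candidate for the connected component of $x_0$ in the limiting continuation region, and $\lambda^i$ lives only on that interval. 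The $\mathcal{M}^1_{x_0}$-topology cannot see anything outside this component. If you now start the process at some $x\in U$ lying outside $\overline{(L^i,R^i)}$, the time $\tau^{D^i,\lambda^i}$ stops immediately under $\PR_x$, whereas the sequence $\tau^i_n$ under $\PR_x$ may converge to something entirely nontrivial, determined by the connected component of $x$ in $D^i_n$. So the claimed convergence $\tau^i_n\to\tau^i$ $\PR_x$-a.s.\ for all $x\in U$ is false for the object you have constructed; the obstruction is not the choice of exceptional nullsets but the fact that the limit identified at $x_0$ simply does not contain the data needed at other starting points. (The paper flags exactly this issue in the remark following the theorem.)

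What is actually required --- and what the paper's proof supplies --- is a \emph{gluing} argument: one applies the machinery of Proposition \ref{comp} at every $x$, obtaining local candidates $D^i_x=(L^i_x,R^i_x)$ and $\lambda^i_x$, and then proves that these are mutually consistent, i.e.\ $y\in(L^i_x,R^i_x)$ implies $(L^i_y,R^i_y)=(L^i_x,R^i_x)$ and the measures agree. This is done by showing (equations \eqref{LRgl} and \eqref{6fin}) that $L^i_x$ and $R^i_x$ can be expressed purely in terms of the characteristics of the sequence $(D^i_n,\lambda^i_n)_n$ --- via the quantities $\tilde L^i_x,\hat L^i_x$ --- and hence do not depend on the subsequences extracted along the way or on the particular limiting measure $\mu_x$. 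Only then can one define the global $D^i:=\bigcup_x D^i_x$ and $\lambda^i$ by patching, and conclude $\kappa_x(\tau^1_n,\tau^2_n)\to\kappa_x(\tau^1,\tau^2)$ for every $x$. Your proposal, as written, skips this consistency analysis entirely, and also leaves unaddressed the related point that Proposition \ref{comp} only yields convergence along an $x$-dependent subsequence, so some argument (subsequence-independence of the local limits, or a diagonal extraction over $U\cap\mathbb{Q}$ combined with uniqueness of the glued limit) is needed to get convergence of a single subsequence at all rational starting points simultaneously.
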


\begin{remark}
More explicitly, the previous theorem states that the set $$\kappa(\mathcal{Z}^U(E_1)\times \mathcal{Z}^U(E_2)) \subset \bigtimes_{x\in U\cap \mathbb{Q}}\mathcal{M}^1(([0,\infty]\times \overline{U})^2)$$ is compact in the product topology on $\bigtimes_{x\in U\cap \mathbb{Q}}\mathcal{M}^1(([0,\infty]\times \overline{U})^2)$.
Note that this result is in fact not a direct consequence of Tychonoff's theorem. The point is that, for a given $x\in I$, the $\mathcal{M}^1_x$ topology identifies all Markovian randomized times $\tau^{D,\lambda},\tau^{D',\lambda'}$ with the property that the connected components $D_x$ and $D_x'$ of $x$ in $D$, $D'$ respectively, coincide, as well as the generating measures restricted to these connected components, i.e.\ $\lambda\vert_{D_x}= \lambda'\vert_{D_x'}$. Thus, although every sequence $(\kappa(\tau^1_n,\tau^2_n))_{n \in \N}$ has a subsequence that converges to a family of measures $(\mu_x)_{x\in U \cap \mathbb Q}$, it is not immediately obvious that there is a single pair of Markovian randomized times $(\tau^1,\tau^2)$ such that $\kappa(\tau^1,\tau^2)=(\mu_x)_{x\in U \cap \mathbb Q}$. 
\end{remark}

\begin{proof}

    In the proof of Proposition \ref{comp}, we have already shown that $\mathcal{M}^1_x(([0,\infty]\times \overline{U})^2)$, $x\in I,$ is  a compact space. Thus, by Tychonoff's theorem the product space $\bigtimes_{x\in U\cap \mathbb{Q}}\mathcal{M}^1_x(([0,\infty]\times \overline{U})^2)$ is compact as well. This yields relative compactness of $\kappa(\mathcal{Z}^U(E_1)\times \mathcal{Z}^U(E_2))$.

    As a countable product of metric spaces, $\bigtimes_{x\in U\cap \mathbb{Q}}\mathcal{M}^1_x(([0,\infty]\times \overline{U})^2)$ is metric, so it suffices to show sequential closedness in order to finish the proof of compactness. To this end, let $((\tau^{D^1_n,\lambda^1_n},\tau^{D^2_n,\lambda^2_n}))_{n\in \N} \in (\mathcal{Z}^U(E_1)\times \mathcal{Z}^U(E_2))^{\N}$ be a sequence of tuples of Markovian randomized times such that $\kappa(\tau^{D^1_n,\lambda^1_n},\tau^{D^2_n,\lambda^2_n})$ converges in the product topology, i.e.\ pointwise in $x\in U\cap \mathbb Q$, to a family of measures $(\mu_x)_{x\in U \cap \mathbb{Q}} \in \bigtimes_{x\in U\cap \mathbb{Q}} \mathcal{M}^1_x(([0,\infty]\times \overline{U})^2) $. We have to find $(\tau^1,\tau^2) :=(\tau^{D^1,\lambda^1},\tau^{D^2,\lambda^2})\in\mathcal{Z}^U(E_1)\times \mathcal{Z}^U(E_2)$ such that $\kappa(\tau^1,\tau^2) = (\mu_x)_{x\in U \cap \mathbb{Q}}$. To abbreviate, we set $(\tau^1_n,\tau^2_n):=(\tau^{D^1_n,\lambda^1_n},\tau^{D^2_n,\lambda^2_n})$ for each $n\in\N$.

    Given $x\in I \cap \mathbb Q$, we write 
    \begin{align*}
        &L^i_x := L^i, 
        && R^i_x:= R^i,\\
        &\Tilde{L}^i_x:= \Tilde{L}^i,
        &&\Tilde{R}^i:= \Tilde{R}^i,\\
        &D^i_x:=(L^i_x,R^i_x),
        &&\lambda^i_x:=\lambda^i\in \rm(D^i_x),
    \end{align*} where $L^i,R^i, \Tilde{L}^i,\Tilde{R}^i$ are the objects defined in Lemma \ref{aux1} corresponding to $x$ and $\lambda^i$ being the corresponding measure constructed in Lemma \ref{aux3}.

    By definition
    \begin{align}
        \PR_x^{ (\tau^{D^1_x,\lambda^1_x},X_{\tau^{D^1_x,\lambda^1_x}},\tau^{D^2_x,\lambda^2_x},X_{\tau^{D^2_x,\lambda^2_x}})} =\PR_x^{(\tau^{D^1,\lambda^1},X_{\tau^{D^1,\lambda^1}},\tau^{D^2,\lambda^2},X_{\tau^{D^2,\lambda^2}})} \label{consistency2}
    \end{align}
    for all open $D^1_x\subset D^1$, $D^2_x\subset D^2$, $\lambda^1\in \rm(D^1), \lambda^2 \in \rm(D^2)$ such that $L^1_x,R^1_x \not \in D^1$, $L^2,R^2\not \in D^2$ and $\lambda^1\vert_{D^1_x} = \lambda^1_x$, $\lambda^2\vert_{D^2_x} = \lambda^1_x$. 

    We now define our candidate for the limit point via 
    \begin{align}
        &D^i:= \bigcup_{x \in U \cap \mathbb Q} D^i_x, &&\lambda^i\vert_{D^i_x}:= \lambda^i_x, \quad i\in\{1,2\}.\label{go}
    \end{align} Next we will now show that $D^i_x,\lambda^i_x, i\in \{1,2\}$ do not depend on the limiting distribution $\mu_x$, but only on the characteristics of the sequence $((\tau^1_n,\tau^2_n))_{n\in \N}$. For that we define
    \begin{align*}
        &\hat L^i_x:= \sup \left\{y \le x : \lim_{n \to \infty}\lambda^i_n((y-\eps, y+ \eps)) = \infty\,\,\forall \eps >0 \right\},\\
        &\hat R^i_x:= \sup \left\{y \ge x : \lim_{n \to \infty}\lambda^i_n((y-\eps, y+ \eps)) = \infty\,\,\forall \eps >0 \right\}
    \end{align*}and show 
    \begin{align}
        L^i_x = \max\{ \Tilde{L}^i_x, \hat L^i_x\}, \quad R^i_x = \min\{\Tilde{R}^i_x, \hat R^i_x\}. \label{LRgl}
    \end{align} W.l.o.g.\ we only argue for the case of $L^i_x$, the one for $R^i_x$ being analogous.
    $L^i_x \ge \Tilde{L}^i_x$ has been shown in Lemma \ref{aux1} \eqref{consistency1}. By Lemma \ref{aux3} \eqref{3} and \eqref{1b}, using the notation from the lemma, we get
    \begin{align*}
        \lambda^i_x((y-\eps,y+\eps)) \le \liminf_{k \to \infty} \Tilde{\lambda}^i_{n_k,k}((y-\eps,y+\eps)) \le \sup_{n \ge N_m}\Tilde{\lambda}^i_{n,m}((L^i_x,R^i_x))<\infty
    \end{align*} whenever $(y-\eps,y+\eps)\subset (L^i_x,R^i_x)$. This implies $L^i_x \ge \hat L^i_x$. Finally if $L^i_x>\Tilde{L}^i_x$, then $\lim_{n \to \infty} \lambda^i_n([L^i_x-\eps,L^i_x+\eps])= \infty$ for all $\eps>0$ by Lemma \ref{aux2}, so $L^i_x = \hat L^i_x$. With that \eqref{LRgl} is shown.

    By definition the right hand side of \eqref{LRgl} only depends on the characteristics of the sequence in the sense that
    \begin{align}
        L^i_x=\max\{\Tilde{L}^i_x, \hat L^i_x\} = \sup\left\{ y \le x : \liminf_{n \to \infty} \infty \ind{D^i_n \cap (y-\eps,y+\eps) \neq \varnothing} + \lambda^i_n((y-\eps,y+\eps)) = \infty \,\, \forall \eps >0\right\}.\label{6fin}
    \end{align}
    Repeating the argument for $R^i_x$ we infer 
    \begin{align*}
        R^i_x=\min\{\Tilde{R}^i_x, \hat R^i_x\} = \inf\left\{ y \ge x : \liminf_{n \to \infty} \infty \ind{D^i_n \cap (y-\eps,y+\eps) \neq \varnothing} + \lambda^i_n((y-\eps,y+\eps)) = \infty \,\, \forall \eps >0\right\}.
    \end{align*} Together that yields
    \begin{align*}
        y\in (L^i_x,R^i_x) &\Longrightarrow L^i_y = L^i_x, R^i_y = R^i_x,\\
        y = L^i_x &\Longrightarrow L^i_y = L^i_x, R^i_y = L^i_x,\\
        y = R^i_x &\Longrightarrow L^i_y = R^i_x, R^i_y = R^i_x.
    \end{align*} Additionally note that $\lambda^i_x$ does only depend on $D^i_x= (L^i_x, R^i_x)$. This shows that $D^i,\lambda^i$ are well defined by \eqref{go} and that this choice satisfies \eqref{consistency2}. By Proposition \ref{comp} and Lemma \ref{aux3} 
    \begin{align*}
        \kappa_x(\tau^1_n,\tau^2_n) 
        \stackrel{n \to \infty}{\longrightarrow}\PR_x^{ (\tau^{D^1_x,\lambda^1_x},X_{\tau^{D^1_x,\lambda^1_x}},\tau^{D^2_x,\lambda^2_x},X_{\tau^{D^2_x,\lambda^2_x}})} 
        \left(=\PR_x^{(\tau^{D^1,\lambda^1},X_{\tau^{D^1,\lambda^1}},\tau^{D^2,\lambda^2},X_{\tau^{D^2,\lambda^2}})}\right)
    \end{align*} weakly for all $x\in U$, i.e.\ 
    \begin{align*}
        \kappa(\tau^1_n,\tau^2_n) \stackrel{n \to \infty}{\longrightarrow}\PR_x^{(\tau^{D^1,\lambda^1},X_{\tau^{D^1,\lambda^1}},\tau^{D^2,\lambda^2},X_{\tau^{D^2,\lambda^2}})}
    \end{align*} in $\bigtimes_{x\in U\cap \mathbb{Q}}\mathcal{M}^1(([0,\infty]\times \overline{U})^2)$.  
\end{proof}

The rest of this section is dedicated to establishing some technical properties of sequences that converge in the $\mathcal{M}^1$-product topology. The construction of the limit points in Lemma \ref{aux3} \eqref{3} and the proof of Theorem \ref{comp2} evolves around the convergence of the connected components of the continuation region and the convergence of the stopping rate measures on these along subsequences. Together, the results can be interpreted as meaning that for any sequence that converges in the $\mathcal{M}^1$-product topology, it is possible to find a subsequence such that the continuation regions and the stopping rate measures converge in a suitable sense. We first establish that the continuation region of the $\mathcal{M}^1$-product limit is unique. In particular, it is not affected by passing over to subsequences, since the $\mathcal{M}^1$-product topology is metrizable. 

\begin{lemma}\label{lemma:eineUmg}
    Let $U \subset I$ open with compact closure $\overline{U}\subset I$ (closure taken in $\R$) and $(\tau^{D^1,\lambda^1},\tau^{D^2,\lambda^2})$, $(\tau^{\tilde D^1,\tilde \lambda^1},\tau^{\tilde D^2,\tilde \lambda^2})\in \mathcal{Z}^U(E_1)\times \mathcal{Z}^U(E_2)$ with $D^i\neq \tilde D^i$ for some $i\in \{1,2\}$. Then, there are disjoint open neighborhoods $V$ of $(\tau^{D^1,\lambda^1},\tau^{D^2,\lambda^2})$ and $\tilde V$ of $(\tau^{\tilde D^1,\tilde \lambda^1},\tau^{\tilde D^2,\tilde \lambda^2})$ in the $\mathcal{M}^1$-product topology.
\end{lemma}

\begin{proof}
    W.l.o.g.\ assume $D^1 \neq \tilde D^1$. Write $D^1_x$ ($\tilde D^1_x$) to denote the connected component of $D^1$ ($\tilde D^1$) that contains $x$, relying on the convention $D^1_x:= \varnothing$ ($\tilde D^1_x:=\varnothing$) for $x\not \in D^1$ ($x\not \in \tilde D^1$). By assumption, there is some $x\in U\cap \mathbb{Q}$ such that $D^1_x \neq \tilde D^1_x$. $D^1_x$, $\tilde D^1_x$ are open intervals, so we set $D^1_x=(l,r)$, $\tilde D^1_x=(\tilde l,\tilde r)$ with $l=x=r$ ($\tilde l = x =\tilde r$) if $D^1_x=\varnothing$ ($\tilde D^1_x=\varnothing$). W.l.o.g.\ we will only argue for the case $l<\tilde l$. Let $f:I\to \R$ be bounded, continuous and non-negative with $f(y)\ge 1$ for all $y \le \frac{l+ \tilde l}{2}$ and $f(y)=0$ for all $y\ge \tilde l$. We extend $f$ to a function $\overline{f}: [0,\infty]\times I \times [0,\infty]\times I\to \R$ by setting $\overline{f}(t_1,y_1,t_2,y_2):= f(y_1)$ for all $(t_1,y_1,t_2,y_2)\in [0,\infty]\times I \times [0,\infty]\times I$. To compress the notation we set $\sigma_1:=\eta_{\frac{3l + \tilde l }{4}}$ and $\sigma_2:=\eta_{\frac{l + \tilde l}{2}}$. Now, along the lines of the proof of Lemma \ref{aux2},
    \begin{align*}
        &\int \overline{f} d \PR_x^{(\tau^{D^1,\lambda^1}, X_{\tau^{D^1,\lambda^1}} , \tau^{D^2,\lambda^2},X_{\tau^{D^2,\lambda^2}})} 
        = \int f d \PR_x^{X_{\tau^{D^1,\lambda^1}}} 
        \ge \Ex_x \left[\ind{X_{\tau^{D^1,\lambda^1}} \le  \frac{l+ \tilde l}{2}}\right]\\
        \ge& \Ex_x\left[ \ind{\sigma_1< \tau^{D^1,\lambda^1} } \ind{\theta_{\sigma_1} \circ \eta_l < \theta_{\sigma_1} \circ \sigma_2 } \right]
        = \PR_{\frac{3l + \tilde l }{4}}(\eta_l < \sigma_2) ] \Ex_x\left[e^{-A^{D^1,\lambda^1}_{\sigma_1 }}\right]=:c>0.
    \end{align*} On the other hand since $\supp \PR_x^{X_{\tau^{\tilde D^1,\tilde \lambda^1}}} \subset \{f=0\}$
    \begin{align*}
        \int \overline{f} d \PR_x^{(\tau^{\tilde D^1,\tilde \lambda^1}, X_{\tau^{\tilde D^1,\tilde \lambda^1}} , \tau^{\tilde D^2,\tilde \lambda^2},X_{\tau^{\tilde D^2,\tilde \lambda^2}})} 
        = \int f d \PR_x^{X_{\tau^{\tilde D^1,\tilde \lambda^1}}}  
        = 0.
    \end{align*} Thus the desired neighborhoods are 
    \begin{align*}
        V&:= \left\{(\tau^{\hat D^1,\hat \lambda^1}, \tau^{\hat D^2,\hat \lambda^2}) \in \mathcal{Z}^U(E_1)\times \mathcal{Z}^U(E_2): \int \overline{f} d \PR_x^{(\tau^{\hat D^1,\hat \lambda^1}, X_{\tau^{\hat D^1,\hat \lambda^1}} , \tau^{\hat D^2,\hat \lambda^2},X_{\tau^{D^2,\lambda^2}})} >\frac{c}{2}\right\}\quad \text{and} \\
        \tilde V&:= \left\{(\tau^{\hat D^1,\hat \lambda^1}, \tau^{\hat D^2,\hat \lambda^2}) \in \mathcal{Z}^U(E_1)\times \mathcal{Z}^U(E_2): \int \overline{f} d \PR_x^{(\tau^{\hat D^1,\hat \lambda^1}, X_{\tau^{\hat D^1,\hat \lambda^1}} , \tau^{\hat D^2,\hat \lambda^2},X_{\tau^{\hat D^2,\hat \lambda^2}})} <\frac{c}{2}\right\}.
    \end{align*}
    
\end{proof}

The following corollary captures the connection between the continuation regions and stopping rates of a sequence and the continuation region of its limit in the $\mathcal{M}^1$-product topology.

\begin{corollary}\label{corollary:LR}
    Let $U \subset I$ open with compact closure $\overline{U}\subset I$ (closure taken in $\R$), $((\tau^1_n,\tau^2_n))_{n\in \N}=((\tau^{D^1_n,\lambda^1_n},\tau^{D^2_n,\lambda^2_n}))_{n\in \N}$ a sequence in $ \mathcal{Z}^U(E_1)\times \mathcal{Z}^U(E_2)$ and $(\tau^1,\tau^2)=(\tau^{\tilde D^1,\tilde \lambda^1},\tau^{\tilde D^2,\tilde \lambda^2})\in\mathcal{Z}^U(E_1)\times \mathcal{Z}^U(E_2)$ such that $(\tau^1_n,\tau^2_n)\stackrel{n \to \infty}{\longrightarrow}(\tau^1,\tau^2) $ in the $\mathcal{M}^1$-product topology. Then for each $x\in \tilde D^i$, $i=1,2$, the corresponding connected component $ \tilde D^i_x$ of $\tilde D^i$ is of the form $\tilde D^i_x=(L^i_x,R^i_x)$ with $L^i_x$ and $R^i_x$ given by \eqref{LRgl}.
\end{corollary}

\begin{proof}
    In the proof of Theorem \ref{comp2} we show that $((\tau^1_n,\tau^2_n))_{n\in \N}$ converges to a limit $(\tau^{D^1,\lambda^1},\tau^{D^2,\lambda^2})$ with $D^i_x=(L^i_x,R^i_x)$ with $L^i_x$ and $R^i_x$ given by \eqref{LRgl}, $i=1,2$. By Lemma \ref{lemma:eineUmg}, the continuation regions of the limit of a sequence that converges in the $\mathcal{M}^1$-product topology are uniquely determined. Thus, $D^i=\tilde D^i$ for $i=1,2$ and the claim follows.
\end{proof}

The next lemma deals with the construction of a subsequence with vaguely convergent stopping rates.

\begin{lemma}\label{lemma:ratesconverge}
    Let $U \subset I$ open with compact closure $\overline{U}\subset I$ (closure taken in $\R$), $((\tau^1_n,\tau^2_n))_{n\in \N}=((\tau^{D^1_n,\lambda^1_n},\tau^{D^2_n,\lambda^2_n}))_{n\in \N}$ a sequence in $ \mathcal{Z}^U(E_1)\times \mathcal{Z}^U(E_2)$ and $(\tau^1,\tau^2)=(\tau^{D^1,\lambda^1},\tau^{D^2,\lambda^2})\in\mathcal{Z}^U(E_1)\times \mathcal{Z}^U(E_2)$ such that $(\tau^1_n,\tau^2_n)\stackrel{n \to \infty}{\longrightarrow}(\tau^1,\tau^2) $ in the $\mathcal{M}^1$-product topology. Then there is a subsequence $((\tau^1_{n_k},\tau^2_{n_k}))_{k\in \N}$ and $(\tilde \tau^1,\tilde \tau^2)=(\tau^{\tilde D^1,\tilde \lambda^1},\tau^{\tilde D^2,\tilde \lambda^2})\in \mathcal{Z}^U(E_1)\times \mathcal{Z}^U(E_2)$ with $\PR_x^{(\tilde \tau^1,X_{\tilde\tau^1},\tilde \tau^2, X_{\tilde \tau^2})}=\PR_x^{( \tau^1,X_{\tau^1},\tau^2, X_{\tau^2})}$ for all $x\in U\cap \mathbb{Q}$ such that
    \begin{align*}
        \iota_1(\tau^i_{n_k}) \stackrel{k \to \infty}{\longrightarrow} \iota_1(\tilde \tau^i)
    \end{align*} vaguely on each connected components of $D^i (=\tilde D^i)$ for all $i\in \{1,2\}$.
\end{lemma}

\begin{proof}
    In the proof of Theorem \ref{comp2} we construct $\tilde D^1, \tilde \lambda^1, \tilde D^2, \tilde \lambda^2$ such that $\PR_x^{(\tilde \tau^1,X_{\tilde\tau^1},\tilde \tau^2, X_{\tilde \tau^2})}=\PR_x^{( \tau^1,X_{\tau^1},\tau^2, X_{\tau^2})}$ for all $x\in U\cap \mathbb{Q}$ for $(\tilde \tau^1,\tilde \tau^2):=(\tau^{\tilde D^1,\tilde \lambda^1},\tau^{\tilde D^2,\tilde \lambda^2})$.
    Lemma \ref{lemma:eineUmg} shows that $\tilde D^1=D^1$ and $\tilde D^2= D^2$. For each $i\in \{1,2\}$ let $D^i=\bigcup_{n\in \N} (l^i_n,r^i_n)$ with disjoint, nonempty intervals $(l^i_n,r^i_n)$, $n\in \N$. 
    By Corollary \ref{corollary:LR} we have $(l^i_n,r^i_n)=(L^i_x,R^i_x)$ with $L^i_x,R^i_x$ given by \eqref{LRgl} for all $n\in \N$, both $i\in \{1,2\}$ and any $x\in (l^i_n,r^i_n)$. 

    Fix $i\in \{1,2\}$, $n\in \N$, and $x\in (l^i_n,r^i_n)$. The construction of $\tilde \lambda^i$ in Lemma \ref{aux3} \eqref{3} provides a subsequence $((\tau^1_{n_k},\tau^2_{n_k}))_{k\in \N}$ such that $\tilde \lambda^i_{n_k,k} \stackrel{k \to \infty}{\longrightarrow} \lambda^i$ vaguely on $(l^i_n,r^i_n)=(L^i_x,R^i_x)$, where $\tilde \lambda^i_{n_k,k}$ extends $\inda{[L^i_x+\delta_k,R^i_x-\delta_k]}\lambda^i_{n_k}$ by 0 to $(L^i_x,R^i_x)$ and $(\delta_k)_{k\in \N}\in (0,\infty)^{\N}$ is a sequence such that $\delta_k \searrow 0$ for $k\to \infty$.
    
    Consider an arbitrary test function $\varphi: (l^i_n,r^i_n)\to \R$ which is $\ccc^\infty$ with compact support in $(l^i_n,r^i_n)$, i.e., $\dist(\supp \varphi, \partial(l^i_n,r^i_n))>0 $, where $\dist(A,B):=\inf\{\vert y-z\vert : y\in A,z\in B\}$ for $A,B\subset I$. For all $k$ with $\delta_k<\dist(\supp \varphi, \partial(l^i_n,r^i_n))$ we have 
    \begin{align*}
        \int \varphi d\iota_1(\tau^i_{n_k})=\int \varphi d\lambda^i_{n_k}=\int \varphi d\tilde\lambda^i_{n_k,k} \stackrel{k \to \infty}{\longrightarrow} \int \varphi d\tilde \lambda^i
    \end{align*} i.e.\ by definition of vague convergence $\iota_1(\tau^i_{n_k})\stackrel{k \to \infty}{\longrightarrow}\tilde \lambda^i=\iota_1(\tilde \tau^i)$ vaguely on $(l^i_n,r^i_n)$.
    
    We start by choosing the subsequence with this property corresponding to $i=1$, $n=1$. Of that subsequence, we choose another subsequence that has the property for $i=2$, $n=1$. Then of the second subsequence we choose yet another subsequence that has the property for $i=1$, $n=2$. Proceeding in the same fashion, we construct a sequence of nested subsequences. Now, the diagonal sequence has the property for all $i=1,2$ and all $n\in \N$ as desired.   
\end{proof}

\section{Existence of Markovian equilibria in discretized games} \label{secdisc}

In order to prove our first main Theorem \ref{absbdryexists}, we introduce a discretized versions of the Dynkin game under consideration. We start by establishing existence of equilibria in the discretized game by applying a fixed point theorem based on the $\mathcal{M}$-topology. 

The central Assumption \eqref{A} imposes to the war of attrition type of the game and is necessary to work out certain properties of the best response problem, see Lemma \ref{indiff}. In order to determine the $\mathcal{M}$-continuity points of $J^1,J^2$ in Lemma \ref{MC} we use the structural Assumptions \eqref{B1} or \eqref{C1}, \eqref{C2}, \eqref{C3}. In that lemma Assumption \eqref{B1} or \eqref{C1} clarify the boundary behavior of $X$, since other behaviors would require different treatment. In case of \eqref{C1} we need to add the technical assumptions \eqref{C2} and \eqref{C3} in order pass over to time $\infty$ or the boundary of the state space respectively.

In the proof of Lemma \ref{lemma_closed} the Assumption \eqref{B2} complements the requirements \eqref{nonstop}, \eqref{nonstop2} for the best response mapping, when Lemma \ref{MC} is applied in the case described in Remark \ref{remMC} to ensure closedness of the graph of the best response. By working out versions of Lemma \ref{qi} and Lemma \ref{MC} specifically for $J^1, J^2$ at $\partial I$, Assumption \eqref{B2} could be generalized. Together this leads to the formulation of the main Theorems \eqref{ex1} of this section under the combined assumptions \eqref{A}, \eqref{B1}, \eqref{B2}.

\begin{definition}
For $U\subset I$ and $E\sim \Exp(1)$, let 
\begin{align*}
    \mathcal{T}_U&:= \{\tau \in \mathcal{T}: X_\tau\in U \},\\
    \ran_U(E)&:=\{\tau \in \ran(E): X_\tau\in U \}.
\end{align*} We call a pair $(\tau_1,\tau_2) \in \ran_U(E_1)\times \ran_U(E_2)$ \textit{$U$-equilibrium at $x$} if
    \begin{align}
        &J^1(x,\tau_1, \tau_2) \ge J^1(x,\tau_1', \tau_2) \quad \text{and}\label{Uequi1} \\
        &J^2(x, \tau_1,\tau_2) \ge J^2(x, \tau_1,\tau_2')\label{Uequi2}
    \end{align} for all $\tau_1'\in\ran_U(E_1)$ and all $\tau_2' \in \ran_U(E_2)$. If $(\tau_1,\tau_2)$ is a $U$-equilibrium at $x$ for all $x\in I$, we call $(\tau_1,\tau_2)$ \textit{$U$-equilibrium}. A $U$-equilibrium $(\tau_1,\tau_2)\in \mathcal{Z}_U(E_1)\times \mathcal{Z}_U(E_2)$ is called \textit{Markov perfect $U$-equilibrium}.
\end{definition}

First, we rewrite the best response functional for the players when the opponent has chosen a Markovian randomized time. Lemma \ref{indiff1} and Lemma \ref{indiff} formally only cover the case where player 1 reacts to a strategy of player 2, but clearly, by symmetry, analogous results hold if the roles of the players are interchanged.

\begin{lemma}\label{indiff1}
    Fix $\tau_2=\tau^{D,\lambda}\in\mathcal{Z}(E_2)$ and set $A:=A^{\tau_2}$, $A_{t-}:=\lim_{s\nearrow t} A_s$ and $$F:I\to \R,\;x\mapsto \Ex_x\left[\int_{[0,\tau^D)} e^{-A_s-r_1s}f_1(X_s)dA_s\right].$$ Then,
            \begin{align*}
                J^1(x,\tau,\tau_2) =&F(x) + \Ex_x[\ind{\tau<\tau^D}e^{-A_\tau-r_1\tau}(g_1(X_\tau)-F(X_\tau))]\\
                &+\Ex_x[\ind{\tau \ge \tau^D}(e^{-A_{\tau-}-r_1\tau}g_1(X_\tau)+ \ind{\tau>\tau^D}e^{-A_{\tau^D-}-r_1 \tau^D}f_1(X_{\tau^D}))]\\
                =&F(x) +\Ex_x[\ind{\tau<\tau^D} e^{-A_{\tau}-r_1\tau} (g_1(X_\tau)-F(X_\tau))] \\
                &+ \Ex_x[\ind{\tau=\tau^D}e^{-A_{\tau^D-}-r_1\tau^D}g_1(X_{\tau^D})+ \ind{\tau>\tau^D}e^{-A_{\tau^D-}-r_1 \tau^D}f_1(X_{\tau^D})]
            \end{align*} for all $\tau \in \ran(E_1)$ and all $x\in I$. In accordance with \eqref{stoppr} we use the convention $g_1(X_\infty):=\lim_{t\to \infty} g_1(X_t)$.
\end{lemma}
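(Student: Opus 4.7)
The plan is to integrate out $E_2$ and then invoke the strong Markov property of $X$ at $\tau$. Since $\tau\in\ran(E_1)$ is adapted to $(\f_t\vee\sigma(E_1))_{t\ge0}$ while $E_2$ is independent of both $X$ and $E_1$, I would condition on $\f^X_\infty\vee\sigma(E_1)$. The additive functional $A:=A^{\tau_2}$ is continuous on $[0,\tau^D)$ and jumps to $+\infty$ at $\tau^D$, so from $\tau_2=\inf\{t\ge0:A_t\ge E_2\}$ with $E_2\sim\Exp(1)$ one reads off that the conditional law of $\tau_2$ given $\f^X_\infty\vee\sigma(E_1)$ is the Stieltjes measure $e^{-A_s}\,dA_s$ on $[0,\tau^D)$ together with an atom of mass $e^{-A_{\tau^D-}}$ at $\tau^D$ (convention $e^{-\infty}=0$). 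In particular $\PR(\tau\le\tau_2\mid\f^X_\infty\vee\sigma(E_1))=e^{-A_{\tau-}}$, which equals $e^{-A_\tau}$ on $\{\tau<\tau^D\}$, $e^{-A_{\tau^D-}}$ on $\{\tau=\tau^D\}$, and vanishes on $\{\tau>\tau^D\}$.

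Plugging this into the two summands of $J^1$ and separating the atom at $\tau^D$ from the continuous part yields
\begin{align*}
    J^1(x,\tau,\tau_2) &= \Ex_x\bigl[e^{-A_{\tau-}-r_1\tau}g_1(X_\tau)\bigr] + \Ex_x\bigl[\ind{\tau>\tau^D}e^{-A_{\tau^D-}-r_1\tau^D}f_1(X_{\tau^D})\bigr]\\
    &\quad+\Ex_x\Bigl[\int_{[0,\tau\wedge\tau^D)} e^{-A_s-r_1 s}f_1(X_s)\,dA_s\Bigr].
\end{align*}
Setting $H:=\int_{[0,\tau^D)}e^{-A_s-r_1 s}f_1(X_s)\,dA_s$, so that $F(x)=\Ex_x[H]$, the substitution $u=s-\tau$ together with the additive-functional identity $\theta_\tau\circ A_u=A_{u+\tau}-A_\tau$ gives, on $\{\tau<\tau^D\}$, $\int_{[\tau,\tau^D)} e^{-A_s-r_1 s}f_1(X_s)\,dA_s = e^{-A_\tau-r_1\tau}\,\theta_\tau\circ H$, and the strong Markov property of $X$ then provides
\begin{equation*}
    \Ex_x\Bigl[\ind{\tau<\tau^D}\int_{[\tau,\tau^D)}e^{-A_s-r_1 s}f_1(X_s)\,dA_s\Bigr] = \Ex_x\bigl[\ind{\tau<\tau^D}e^{-A_\tau-r_1\tau}F(X_\tau)\bigr].
\end{equation*}

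Decomposing $F(x)=\Ex_x[H]$ along the partition $\{\tau<\tau^D\},\{\tau\ge\tau^D\}$ (splitting the integral at $\tau$ on the first event) and using the identity just derived rearranges into
\begin{equation*}
    \Ex_x\Bigl[\int_{[0,\tau\wedge\tau^D)} e^{-A_s-r_1 s}f_1(X_s)\,dA_s\Bigr] = F(x) - \Ex_x\bigl[\ind{\tau<\tau^D}e^{-A_\tau-r_1\tau}F(X_\tau)\bigr],
\end{equation*}
and partitioning $\Ex_x[e^{-A_{\tau-}-r_1\tau}g_1(X_\tau)]$ along $\{\tau<\tau^D\},\{\tau=\tau^D\},\{\tau>\tau^D\}$ (the last piece vanishing since $e^{-A_{\tau-}}=0$ there) finally produces the second asserted identity, with the bracket $g_1(X_\tau)-F(X_\tau)$ on $\{\tau<\tau^D\}$ emerging as the $g_1$-contribution minus the strong Markov correction. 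The first form is equivalent because $e^{-A_{\tau-}}=0$ on $\{\tau>\tau^D\}$ collapses the $g_1$-term there. I expect the main difficulty to be the careful bookkeeping around the jump of $A$ at $\tau^D$ and the consistent use of left limits $A_{\tau-}$ on $\{\tau=\tau^D\}$; the strong Markov step itself is unproblematic since $\tau$ is a stopping time in $(\f_t\vee\sigma(E_1))_{t\ge0}$ while $H$ depends on $X$ alone, so the enlargement by $\sigma(E_1)$ plays no role.
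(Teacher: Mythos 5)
Your proposal is correct and follows essentially the same route as the paper: condition on $\f^X_\infty\vee\sigma(E_1)$ to identify the conditional law of $\tau_2$ as $e^{-A_{s-}}\,dA_s$ on $[0,\tau\wedge\tau^D)$ plus the atom $e^{-A_{\tau^D-}}$ at $\tau^D$, then split the $f_1$-integral at $\tau$ and apply the (strong) Markov property to convert the shifted piece into $F(X_\tau)$ on $\{\tau<\tau^D\}$. The only cosmetic difference is that you write $e^{-A_s}\,dA_s$ where the paper keeps $e^{-A_{s-}}\,dA_s$; these agree on $[0,\tau^D)$ since $A$ is continuous there.
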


\begin{proof}

    We first calculate the conditional distribution of $\tau_2$ under $\PR_x$ given $\f^X_\infty,\sigma(E_1)$. For $t\ge 0$ we have
    \begin{align*}
        \PR_x(\tau_2>t \vert\f^X_\infty, \sigma(E_1)) = \PR_x(A_t<E_2\vert \f^X_\infty, \sigma(E_1)) = e^{-A_t}.
    \end{align*} Thus $d(-e^{-A_{\bigcdot}})$ (i.e.\ the Markov kernel $\Omega\times \mathcal{B}([0,\infty])\ni (\omega, (a,b]) \mapsto e^{-A_a(\omega)}-e^{-A_b(\omega)} $) is a regular version of the conditional distribution of $\tau_2$ given $\f^X_\infty,\sigma(E_1)$. Applying a change of variables for finite variation processes, cf. \cite[p. 42]{Protter2005}, and using that $A$ only jumps in $\tau^D$, we find
    \begin{align*}
        e^{-A_a(\omega)}-e^{-A_b(\omega)} = \int_{(a,b\wedge \tau^D)} e^{A_{s-}}dA_{\bigcdot}(s)  + e^{A_{\tau^D-}} - e^{A_{\tau^D}}.
    \end{align*}
    This yields
    \begin{align*}
        &J^1(x,\tau,\tau_2) 
        = \Ex_x[\ind{\tau\le \tau_2}e^{-r_1\tau}g_1(X_\tau)+\ind{\tau >\tau_2}e^{-r_1\tau_2}f_1(X_{\tau_2})]\\
        =& \Ex_x[ e^{-r_1\tau}g_1(X_\tau) \Ex_x[  \ind{\tau\le \tau^2}\vert \f^X_\infty,\sigma(E_1)] +\Ex_x[\ind{\tau >\tau_2}e^{-r_1\tau_2}f_1(X_{\tau_2}) \vert \f^X_\infty,\sigma(E_1)]] \\
        =& \Ex_x\left[ e^{-r_1\tau}g_1(X_\tau) \int_{[\tau,\infty)}  d (-e^{-A_{\bigcdot}}) + \int_{[0,\tau)} e^{-r_1 s}f_1(X_s) d(-e^{-A_{\bigcdot}})(s)\right]\\
        =& \Ex_x\bigg[e^{-r_1\tau}g_1(X_\tau) e^{-A_{\tau-}} +\int_{[0,\tau\wedge\tau^D)} e^{-r_1 s}f_1(X_s)e^{-A_{s-}}dA_{\bigcdot}(s) \\
        &+ \ind{\tau>\tau^D}e^{-r_1 \tau^D}f_1(X_{\tau^D}) (e^{-A_{\tau^D-}} - e^{-A_{\tau^D}}) \bigg]\\
        =& \Ex_x\left[e^{-A_{\tau-}-r_1\tau}g_1(X_\tau) +\int_{[0,\tau\wedge\tau^D)} e^{-A_{s-}-r_1 s}f_1(X_s)dA_{\bigcdot}(s) + \ind{\tau>\tau^D}e^{-A_{\tau^D-}-r_1 \tau^D}f_1(X_{\tau^D}) \right]\\
        =& \Ex_x\left[e^{-A_{\tau-}-r_1\tau}g_1(X_\tau) + F(x) - \int_{[\tau\wedge\tau^D,\tau^D)}e^{-A_{s}-r_1 s}f_1(X_s)dA_{\bigcdot}(s) + \ind{\tau>\tau^D}e^{-A_{\tau^D-}-r_1 \tau^D}f_1(X_{\tau^D}) \right]\\
        =&F(x)+\Ex_x\bigg[\ind{\tau<\tau^D} e^{-A_{\tau}-r_1\tau}\bigg(g_1(X_\tau)+\Ex_x\bigg[\theta_\tau\circ\int_{[0,\tau^D)}e^{-A_{s}-r_1 s}f_1(X_s)dA_{\bigcdot}(s) \bigg\vert \f_\tau \bigg]\bigg) \bigg]\\
        &+\Ex_x\big[\ind{\tau \ge \tau^D}(e^{-A_{\tau-}-r_1\tau}g_1(X_\tau)+ \ind{\tau>\tau^D}e^{-A_{\tau^D-}-r_1 \tau^D}f_1(X_{\tau^D}))\big]\\
        =&F(x) +\Ex_x[\ind{\tau<\tau^D} e^{-A_{\tau}-r_1\tau} (g_1(X_\tau)-F(X_\tau))] \\
        &+\Ex_x[\ind{\tau \ge \tau^D}(e^{-A_{\tau-}-r_1\tau}g_1(X_\tau)+ \ind{\tau>\tau^D}e^{-A_{\tau^D-}-r_1 \tau^D}f_1(X_{\tau^D}))]\\
        =&F(x) +\Ex_x[\ind{\tau<\tau^D} e^{-A_{\tau}-r_1\tau} (g_1(X_\tau)-F(X_\tau))] \\
        &+ \Ex_x[\ind{\tau=\tau^D}e^{-A_{\tau^D-}-r_1\tau^D}g_1(X_{\tau^D})+ \ind{\tau>\tau^D}e^{-A_{\tau^D-}-r_1 \tau^D}f_1(X_{\tau^D})].
    \end{align*}
\end{proof}

The following lemma deals with the properties of the stopping problem faced by player 1 when player 2 has chosen a fixed strategy. The first statement is that under Assumption \eqref{A} randomization can be an optimal strategy only at points where stopping is also optimal. From then on we work under the additional assumptions \eqref{B1} and \eqref{B2}. In the second part of the following, relying on the previous lemma which guarantees that the best-response problem is a Markovian stopping problem, we reformulate the problem so that general theory provides the existence of Markovian best responses. The third part establishes a connection between the value functions of the problem on $I$ and those of discretized versions of the problem. Finally, we show what is sometimes called the \textit{principle of indifference}, which says that randomization is optimal if and only if both stopping and continuing are optimal.

\begin{lemma}\label{indiff}
    Assume \eqref{A} to hold. Let $U\subset I$, fix $\tau_2=\tau^{D,\lambda}\in\mathcal{Z}(E_2)$, and consider the best response problem
        \begin{align}
        V^1_U(x,\tau_2) = \sup_{\tau \in \mathcal{R}_U(E_1)} J^1(x,\tau,\tau_2), \quad x\in U.\label{pls1}
    \end{align} We call $\tau^* \in \ran_U(E_1)$ a \textit{best response (to $\tau_2$)} or \textit{optimal stopping time (in \eqref{pls1}/in the stopping problem associated with $V^1_U(x,\tau_2)$)} if $V^1_U(x,\tau_2)=J^1(x,\tau^*,\tau_2)$ for all $x\in U$. 
    
    \begin{enumerate}[(i)]
        \item \label{weakindiff} If $\tau^*=\tau^{D^*,\lambda^*}\in \mathcal{Z}_U$ is a best response, then $V^1_U(y,\tau_2)=g_1(y)$ for all $y\in (\supp(\lambda^*)\cap D)\cup (D^*)^c$. And if $\supp(\lambda^*)\cap D^c \cap \{y\in U : f_1(y)>g_1(y)\}=\varnothing$, then $V^1_U(y,\tau_2)=g_1(y)$ for all $y\in \supp(\lambda^*)\cup (D^*)^c$, in particular $\PR_x(X_{\tau^*}\in \{y\in U: V^1_U(y,\tau_2)=g_1(y)\}\vert \tau^*<\infty)=1$ for all $x\in I$.

        \item \label{indiff2} Assume \eqref{B1} and \eqref{B2}. If $U$ is finite or $U=I$, then there exists a best response $\tau^*\in \mathcal{Z}_U(E_1)$. 
        
        \item \label{indiff15} Assume \eqref{B1} and \eqref{B2}. If $U_1\subseteq U_2\subseteq \dots \subseteq I$ are finite and $\bigcup_mU_m$ is dense in $I$, then
        \[V_I^1(x)=\sup_{m\in\N}\sup_{\tau\in \mathcal Z_{U_m}(E_1)}J^1(x,\tau,\tau_2)\;\;\mbox{for all $x\in U$.}\]       
        
        \item \label{weakindiff2} Assume \eqref{B1} and \eqref{B2} and let $U$ be finite. We extend $V^1_U$ to $I\setminus U$ via 
        \begin{align*}
            V^1_U(y,\tau_2)&:=\begin{cases}
                g_1(y), &r_1=0,\\
                0, &r_1>0
            \end{cases}\quad \text{for } \,y\in \partial I\setminus U 
        \end{align*} and write $V^1_U(X_\infty,\tau_2):= \lim_{t \to \infty}V^1_U(X_t,\tau_2)$. With this we set 
        \begin{align*}
            \sigma_y:=&\eta^{(U\cup \partial I)\setminus\{y\}}, \quad y\in I,\\
            S_1:=&\{y\in U : V^1_U(y,\tau_2)=g_1(y)\},\\
            S_2:=&\{y\in S_1: V^1_U(y,\tau_2)> \Ex_y[\ind{\sigma_y> \tau_2} e^{-r_1\tau_2 }f_1(X_{\tau_2}) + \ind{\sigma_y\le \tau_2}e^{-r_1\sigma_y} V^1_U(X_{\sigma_y},\tau_2)]\},\\
            T_1:=&\{\tau^{D^*,\lambda^*}\in \mathcal{Z}_U: V^1_U(y,\tau_2)=J^1(y,\tau^{D^*,\lambda^*},\tau_2) \,\forall y\in U,\\
            &\supp(\lambda^*)\cap D^c \cap \{y\in U : f_1(y)>g_1(y)\}=\varnothing\},\\
            T_2:=&\{\tau^{D^*,\lambda^*}\in \mathcal{Z}_U: S_2 \subset (D^*)^c\subset S_1, \supp(\lambda^*)\subset S_1,\\
            &\supp(\lambda^*)\cap D^c \cap \{y\in U : f_1(y)>g_1(y)\}=\varnothing\}.
        \end{align*} Then $T_1=T_2$.


    \end{enumerate}
\end{lemma}

\begin{proof}
\eqref{weakindiff} Clearly, it is sufficient to show the first claim for $x\in \supp(\lambda^*)\cap D$. Set $\tau^*(z):=\inf\{t\ge 0: A^{\tau^*}_t\ge z\}\in \mathcal{T}_U$. This means $\tau^*=\tau^*(E_1)$. Since $X, E_2$ and $E_1$ are independent, Fubini's theorem yields
\begin{align*}
    &V^1_U(x,\tau_2)
    = J^1(x,\tau^*,\tau_2)=\Ex_x[\ind{\tau^*(E_1)\le \tau_2}e^{-r_1\tau^*(E_1)}g_1(X_{\tau^*(z)})+\ind{\tau^*(E_1) >\tau_2}e^{-r_1\tau_2}f_1(X_{\tau_2})] \\
    =& \int_{[0,\infty)} \Ex_x[\ind{\tau^*(z)\le \tau_2}e^{-r_1\tau^*(z)}g_1(X_{\tau^*(z)})+\ind{\tau^*(z) >\tau_2}e^{-r_1\tau_2}f_1(X_{\tau_2})]d\PR_x^{E_1}(z)\\
    =& \int_{[0,\infty)} J^1(x,\tau^*(z),\tau_2) d\PR_x^{E_1}(z).
\end{align*} Since $J^1(x,\tau^*(z),\tau_2)\le V^1_U(x,\tau_2)$ for all $t\ge 0$, we infer $J^1(x,\tau^*(z),\tau_2)= V^1_U(x,\tau_2)$ for $\PR_x^{E_1}$-a.a. $z\in [0,\infty)$. Note that $E_1\sim \Exp(1)$ implies that the notions $\PR_x^{E_1}$-a.a. $z\in [0,\infty)$ and Lebesgue almost all $z\in [0,\infty)$ are identical. By continuity there is a $\PR_x$-null set $N\in \f_\infty^X$ such that for all $t>0$ and all $\omega\in \Omega\setminus N$ there is a neighborhood $B_{t,\omega}$ of $x$ with $\inf_{y\in B} l^y_t(\omega)>0$. Since $x\in \supp(\lambda^*)$, $\lambda^*(B)>0$ for every neighborhood $B$ of $x$ and thus 
\begin{align*}
    &\PR_x(A^{\tau^*}_t>0 \,\forall t>0)
    \ge \PR_x\left(\int_{B_{t,\omega}\cap D^*} l^y_t d\lambda^*(y)>0\,\forall t>0\right)\\
    \ge& \PR_x\left(\inf_{y\in B_{t,\omega}} l^y_t(\omega) \lambda^*(B_{t,\omega}\cap D^*)>0\forall t>0\right)=1
\end{align*} Form this we infer $\tau^*(z)\to 0$ $\PR_x$-a.s.\ for $z\searrow 0$. Moreover, $t\mapsto A^{\tau_2}_t$ is continuous on $[0,\tau^D)$ with $A^{\tau_2}_0=0$. As $\tau^D>0$ $\PR_x$-a.s., this implies $\ind{\tau^* > \tau_2}=\ind{A^{\tau_2}_{\tau^*(z)}> E_2}\to 0$ $\PR_x$-a.s. Choosing $(z_n)_{n\in \N}$ such that $J^1(x,\tau^*(z_n),\tau_2)= V^1_U(x,\tau_2)$ and $z_n\searrow 0$, dominated convergence provides
\begin{align*}
    &V^1_U(x,\tau_2)=\lim_{n\to \infty}J^1(x,\tau^*(z_n),\tau_2) \\
    =&  \Ex_x\left[\lim_{n\to \infty} \ind{\tau^*(z_n)\le \tau_2}e^{-r_1\tau^*(z_n)}g_1(X_{\tau^*(z_n)})+\ind{\tau^*(z_n) >\tau_2}e^{-r_1\tau_2}g_1(X_{\tau_2})\right] =g_1(x).
\end{align*}

\eqref{indiff2} We start with the first claim. In order to employ classical existence results for stopping problems, we need to consider non negative payoffs. To achieve this, we use a change of measure to obtain an equivalent problem without discounting, so that the payoffs can be shifted appropriately without changing the solution. The first technicality that has to deal with is the fact that we can only eliminate the discounting up to the time of absorption of $X$, so we start by reducing the problem to that time frame.

If $r_1>0$, it is optimal for player 1 to never stop in points in $\{g_1<0\}\cap \partial I\cap D$. Thus, the problem \eqref{pls1} is equivalent to the problem 
\begin{align}
    \tilde V^1_U(x,\tau_2)&= \sup_{\tau \in \mathcal{R}_U(E_1)} \tilde J^1(x,\tau,\tau_2), \quad x\in U,\label{pls2}\\
    \tilde J^1(x,\tau,\tau_2):&= \Ex_x[\ind{\tau\le \tau_2}e^{-r_1\tau}\tilde g_1(X_\tau)+\ind{\tau >\tau_2}e^{-r_1\tau_2}f_1(X_{\tau_2})],\notag\\
    \tilde g_1 :&= \max\{g_1, \inda{D^c\cup I^\circ} g_1+\ind{r_1=0}\inda{\partial I\cap D}g_1\}\notag
\end{align} in the sense that for every optimal stopping time $\tau^{D^*,\lambda^*}\in \mathcal{Z}_U(E_1)$ in \eqref{pls2} the stopping time $\tau^{D^*\cup (\{g_1<0\}\cap \partial I\cap D),\lambda^*}$ with $\lambda^*$ extended by 0 is optimal in \eqref{pls1}. By construction and due to assumption \eqref{B2}, $\tilde J^1(x,\tau,\tau_2)\le \tilde J^1(x,\tau\wedge\tau^{I^\circ},\tau_2)$ for all $\tau$. This implies
\begin{align*}
    \tilde V^1_U(x,\tau_2)=\sup_{\tau \in \mathcal{R}_U(E_1)} \tilde J^1(x,\tau\wedge\tau^{I^\circ},\tau_2).
\end{align*} By \cite[Lemma 2.1.]{dayanik2008optimal}, there are functions $\psi, \varphi:I\to [0,\infty)$ such that $e^{-r_1(t\wedge \eta_{\max I})}\psi(X_{t\wedge \eta_{\max I}})$ and $e^{-r_1(t\wedge \eta_{\min I})}\varphi(X_{t\wedge \eta_{\min I}})$ are martingales. By \citep[Lemma 2.2.]{dayanik2008optimal}, $\psi$ and $\varphi$ are continuous and $\psi+\varphi>0$. We now define new measures $Q_x$, $x\in I$ via $\frac{dQ_x}{d\PR_x}:= e^{-r_1(t\wedge \tau^{I^\circ})}\frac{\psi(X_{\tau\wedge\tau^{I^\circ}})+\varphi(X_{\tau\wedge\tau^{I^\circ}})}{\psi(x)+\varphi(x)}$ on $\f_t$ and denote the expectation under $Q_x$ by $\Ex_x^Q$. Under $(Q_x)_{x\in I}$ the process $X$ is still a regular diffusion on $I$ with absorbing boundary $\partial I$. Additionally the distribution of $E_1, E_2$ are not affected by the change of measure as $\frac{dQ_x}{d\PR_x}$ only depends on $X$ which is independent of $E_1, E_2$. With the convention $\frac{\tilde g_1}{\psi+\varphi}(X_\infty):=\lim_{t\to \infty}\frac{\tilde g_1}{\psi+\varphi}(X_t)$ and \eqref{B2} we find that
\begin{align*}
    &\tilde J^1(x,\tau\wedge\tau^{I^\circ},\tau_2)\\
    =&\Ex_x\left[e^{-r_1 \tau\wedge\tau^{I^\circ}\wedge \tau_2} \frac{ \psi(X_{\tau\wedge\tau^{I^\circ}\wedge \tau_2})+\varphi(X_{\tau\wedge\tau^{I^\circ}\wedge \tau_2})}{\psi(x)+\varphi(x)} \frac{\ind{\tau\wedge\tau^{I^\circ}\le \tau_2}\tilde g_1(X_{\tau\wedge\tau^{I^\circ}})+\ind{\tau\wedge\tau^{I^\circ} >\tau_2}f_1(X_{\tau_2})}{\psi(X_{\tau\wedge\tau^{I^\circ}\wedge \tau_2})+\varphi(X_{\tau\wedge\tau^{I^\circ}\wedge \tau_2})}\right](\psi(x)+\varphi(x))\\
    =&\Ex_x^Q\left[ \frac{\ind{\tau\wedge\tau^{I^\circ}\le \tau_2}\tilde g_1(X_{\tau\wedge\tau^{I^\circ}})+\ind{\tau\wedge\tau^{I^\circ} >\tau_2}f_1(X_{\tau_2})}{\psi(X_{\tau\wedge\tau^{I^\circ}\wedge \tau_2})+\varphi(X_{\tau\wedge\tau^{I^\circ}\wedge \tau_2})}\right] (\psi(x)+\varphi(x))\\
    =& \Ex_x^Q\left[ \ind{\tau\le \tau_2} \left(\frac{\tilde g_1}{\psi+\varphi}-\inf_{y\in I}\frac{\tilde g_1}{\psi+\varphi}(y)\right)(X_{\tau})+ \ind{\tau>\tau_2} \left(\frac{f_1}{\psi+\varphi}-\inf_{y\in I}\frac{\tilde g_1}{\psi+\varphi}(y)\right)(X_{\tau_2})  \right] (\psi(x)+\varphi(x))\\
    &+ \inf_{y\in I}\frac{\tilde g_1}{\psi+\varphi}(y)(\psi(x)+\varphi(x)).
\end{align*} If we set $\hat g_1:= \frac{\tilde g_1}{\psi+\varphi}-\inf_{y\in I}\frac{\tilde g_1}{\psi+\varphi}(y)$, $\hat f_1:=\frac{f_1}{\psi+\varphi}-\inf_{y\in I}\frac{\tilde g_1}{\psi+\varphi}(y)$, it is now sufficient to find a Markovian solution to the problem
\begin{align}
    \hat V^1_U(x,\tau_2)= \sup_{\tau \in \mathcal{R}_U(E_1)} \Ex_x^Q[ \ind{\tau\le \tau_2} \hat g_1(X_{\tau})+ \ind{\tau>\tau_2} \hat f_1(X_{\tau_2})]. \label{pls3}
\end{align} We set $Y=(Y_t)_{t\in [0,\infty)}:=(X_{t\wedge \tau^D})_{t \in [0,\infty)}$, $\hat F:I \to \R$, $x\mapsto \Ex_x[\int_{[0,\tau^D)} e^{-A_s}\hat f_1(X_s)dA_s]$, define the upper semicontinuous $h_U:I\to [0,\infty)$
\begin{align*}
    h_U(x):=\begin{cases}
        (\hat g_1(x)-\hat F(x))^+, &x\in D\cap U \cap I^\circ,\\
        \hat f_1(x), &x\in  D^c,\\
        \hat g_1(x),& x\in \partial I\setminus D^c,\\
        0, &x\in (D\cap I^\circ)\setminus U.
    \end{cases}
\end{align*} and
\begin{align}
    H_U(x) := \sup_{\tau \in \ran(E_1)} \Ex_x\left[e^{-A_{\tau-}}h_U(Y_\tau)\right], \quad x\in I,\label{pls}
\end{align} where $e^{-A_{\infty-}}h_U(Y_\infty):= \lim_{t \to \infty}e^{-A_{t-}}h_U(Y_t)$. Note that since $X$ is absorbed at $\partial I$ we have $\PR_z(A_t=0 \forall t <\tau^D)=\PR_z(l^y_t=0\forall t\ge0)=1$ for all $z\in \partial I$ and thus $F=0$ on $\partial I\setminus D^c$. Thus, by Lemma \ref{indiff1}, which can by applied under the measure $Q$ analogously,
\begin{align*}
    &\Ex_x^Q[ \ind{\tau\le \tau_2} \hat g_1(X_{\tau})+ \ind{\tau>\tau_2} \hat f_1(X_{\tau_2})]\\
    \le& \hat F(x) +\Ex_x^Q[\ind{\tau<\tau^D} e^{-A_{\tau}} (\hat g_1(X_\tau)-\hat F(X_\tau))^+] + \Ex_x^Q[\ind{\tau\ge\tau^D}e^{-A_{\tau^D-}}\hat f_1(X_{\tau^D})]\\
    =&\hat F(x)+ \Ex_x^Q[ e^{-A_{\tau\wedge\tau^D-}} h_U(Y_{\tau\wedge\tau^D})]
\end{align*} for all $\tau \in \ran(E_1)$ with equality if $\PR_x(X_\tau\in D^c\cup ((D\cap I^\circ)\setminus (U\setminus\{\hat g_1-\hat F<0\}) ))=0$ due to \eqref{B2}. 
A solution to such problems such as \eqref{pls} in the greatest generality is given in \cite{beibel2001optimal}. In fact, this provides the existence of a maximizing first exit time $\tau^*=\tau^{D^*}\in \mathcal{Z}$ in the class $\mathcal T$, since our function $h=h_U$ obviously satisfies that the relevant maxima in \cite[Section 3/6]{beibel2001optimal} are attained. By the characterization of randomized stopping times by the mixing of stopping times in $\mathcal T$  (\cite[Section 7]{touzi2002continuous}), it is immediately clear that $\tau^*$ is actually a maximum in the class of all randomized stopping times. 
Next we show that from $\tau^*$ we can construct $\hat \tau:=\tau^{\hat D}\in \mathcal{Z}_U$ with $\Ex_x\left[e^{-A_{\tau^*-}}h_U(Y_{\tau^*})\right]=\Ex_x\left[e^{-A_{\tau\wedge \tau^D-}}h_U(Y_{\tau\wedge \tau^D})\right]$, $x\in I$, and $\PR_x(X_{\hat \tau}\in D^c\cup ((D\cap I^\circ)\setminus (U\setminus\{\hat g_1-\hat F<0\}) ))=0$ in order to obtain a Markovian solution of \eqref{pls3}. We set $\hat D:= D^*\cup(\partial I\setminus U)\cup D^c\cup ((D\cap I^\circ)\setminus (U\setminus\{\hat g_1-\hat F<0\}))$. Observe that $\tau^*\wedge \tau^D$ is still optimal since $A_{t-}=\infty$ for all $t>\tau^D$ and so $e^{-A_{\tau^*-}}h_U(Y_{\tau^*})\le e^{-A_{\tau^*\wedge\tau^D-}}h_U(Y_{\tau^*\wedge\tau^D-})$.

By definition $I\setminus\hat D\subset U$, i.e.\ $\hat\tau \in \mathcal{Z}_U$. First $\hat g_1, \hat f_1 \ge 0$ implies $h_U\le 0$ on $((D\cap I^\circ)\setminus U)\cup(I^\circ\cap D\cap U\cap\{\hat g_1-\hat F<0\})$. Secondly since $X$ is absorbed in $\partial I$ and $A_{t}=A_{\tau^{I^\circ}}$ for all $t\in (\tau^{I^\circ},\tau^D)$, $e^{-A_t-}h_U(Y_t)= e^{-A_{\tau^{I^\circ}}-}h_U(Y_{\tau^{I^\circ}})$ on $\{Y_{\tau^{I^\circ}}\in D\cap I^
\circ\}$ for all $t\ge \tau^{I^\circ}$. Together
\begin{align*}
    H_U(x)=\Ex_x\left[e^{-A_{\tau^*-}}h_U(Y_{\tau^*}) \right] = \Ex_x\left[e^{-A_{ \tau^*\wedge\tau^D-}}h_U(Y_{\tau^*\wedge\tau^D}) \right]= \Ex_x\left[e^{-A_{\hat \tau\wedge\tau^D-}}h_U(Y_{\hat\tau\wedge\tau^D}) \right]
\end{align*} as required.

\eqref{indiff15} Here, too, the findings from \cite{beibel2001optimal} directly deliver the desired result, even in the class of first exit times from sets with boundaries in $U_m$: From the proofs in \cite[Section 5]{beibel2001optimal} you can immediately see that stopping at points whose values converge towards the maximum of the functions under consideration delivers the desired result.

\eqref{weakindiff2} We show $T_1\subset T_2$ first. Let $\tau^*=\tau^{D^*,\lambda^*}\in T_1$. By \eqref{weakindiff} it suffices to show $S_2\setminus (D^*)^c=\varnothing$. We assume the contrary, i.e.\ that there is some $x\in S_2\setminus (C^*)^c$. This implies $A^{\tau^*}_{\eta^{U\setminus\{x\}}-}<\infty$ $\PR_x$-almost surely. By definition $\tau^*(z)\ge \eta^{U\setminus\{x\}}$ for all $z > A^{\tau^*}_{\eta^{U\setminus\{x\}}-}$. By arguing as in the proof of \eqref{weakindiff} in the first and fourth step and using the Markov property in the third one we reach a contradiction to $\tau^*\in T_1$ via
\begin{align*}
    &V^1_U(x,\tau_2)= J^1(x,\tau^*(z),\tau_2)\\
    =&\Ex_x\big[ \ind{\tau^*(z)\wedge \tau_2 <\eta^{U\setminus \{x\}}} (\ind{\tau^*(z)\le \tau_2}e^{-r_1\tau^*(z)}g_1(x)+\ind{\tau^*(z) >\tau_2} e^{-r_1\tau_2}f_1(x))\\
    &+ \ind{\tau^*(z) \wedge \tau_2\ge \eta^{U\setminus \{x\}}}(\ind{\tau^*(z)\le \tau_2}e^{-r_1\tau^*(z)}g_1(X_{\tau^*(z)})+\ind{\tau^*(z) >\tau_2}e^{-r_1\tau_2}f_1(X_{\tau_2}))  \big]\\
    =&\Ex_x\big[ \ind{\tau^*(z)\wedge \tau_2<\eta^{U\setminus \{x\}}} (\ind{\tau^*(z)\le \tau_2}e^{-r_1\tau^*(z)}g_1(x)+\ind{\tau^*(z) >\tau_2}e^{-r_1\tau_2}f_1(x))\\
    &+ \ind{\tau^*(z)\wedge \tau_2 \ge \eta^{U\setminus \{x\}}} e^{-r_1 \eta^{U\setminus \{x\}}} V^1_U(X_{\eta^{U\setminus \{x\}}},\tau_2)\big]\\
    \stackrel{z\to \infty}{\longrightarrow}& \Ex_x \big[\ind{\eta^{U\setminus\{x\}}> \tau_2} e^{-r_1\tau_2 }f_1(x) + \ind{\eta^{U\setminus\{x\}}\le \tau_2}e^{-r_1\eta^{U\setminus\{x\} }} V^1_U(X_{\eta^{U\setminus\{x\}}},\tau_2)\big].
\end{align*}

Next, we prove $T_2\subset T_1$. We start by showing that $\tau^{S_2^c}$ is a best response. By \eqref{indiff2} there is some optimal pure first entry time $\tau^{D_1}\in \mathcal{T}_1$. By the above $S_2\subset D_1^c$, so we set $S_3:=D_1^c\setminus S_2$. Based on that, we define $\sigma^1:=\tau^{D_1}+ \ind{X_{\tau^{D_1}}\in S_3} \theta_{\eta^{S_3}} \circ \theta_{\sigma_{X_0}}\circ\tau^{D_1}$, i.e. if $\tau^{D_1}$ stops in the unwanted set $S_3$, $\sigma$ waits until the process $X$ hits another point in $U\cup \partial I$ and then goes by the rule $\tau^{D_1}$ again. Using the definitions, the fact $J(x,\tau^{D_1},\tau_2)=V(x,\tau_2)$ for all $x\in U\cup \partial I$ and the Markov property, we find
\begin{align*}
    &J^1(x,\sigma^1,\tau_2)\\
    =&\Ex_x[\ind{X_{\tau^{D_1}}\not \in S_3} (\ind{\tau^{D_1}\le \tau_2}e^{-r_1\tau^{D_1}} g_1(X_{\tau^{D_1}}) + \ind{ \tau^{D_1} >\tau_2} e^{-r_1\tau_2}f_1(X_{\tau_2}) ) \\
    &+  \ind{X_{\tau^D} \in S_3} e^{-r_1\tau^{D_1}} \Ex_{X_{\tau^{D_1}}} [ \ind{\theta_{\sigma_{X_0}}\circ\tau^{D_1}\le \tau_2}e^{-r_1\theta_{\sigma_{X_0}}\circ\tau^{D_1}} g_1(X_{\theta_{\sigma_{X_0}}\circ\tau^{D_1}}) + \ind{ \theta_{\sigma_{X_0}}\circ\tau^{D_1} >\tau_2} e^{-r_1\tau_2}f_1(X_{\tau_2}) ]]\\
    =&\Ex_x[\ind{X_{\tau^{D_1}}\not \in S_3} (\ind{\tau^{D_1}\le \tau_2}e^{-r_1\tau^{D_1}} g_1(X_{\tau^{D_1}}) + \ind{ \tau^{D_1} >\tau_2} e^{-r_1\tau_2}f_1(X_{\tau_2}) ) \\
    &+\ind{X_{\tau^D} \in S_3} e^{-r_1\tau^{D_1}} \Ex_{X_{\tau^{D_1}}} [\ind{\sigma_{X_0}>\tau_2} e^{-r_1\tau_2} f_1(X_{\tau_2}) + \ind{ \sigma_{X_0}\le \tau_2} e^{-r_1 \sigma_{X_0}}\Ex_{X_{\sigma_{X_0}}}[ \ind{\tau^{D_1}\le \tau_2}e^{-r_1\tau^{D_1}} g_1(X_{\tau^{D_1}}) \\
    &+ \ind{ \tau^{D_1} >\tau_2} e^{-r_1\tau_2}f_1(X_{\tau_2} )  ]  ]  ]\\
    =& \Ex_x[\ind{X_{\tau^{D_1}}\not \in S_3} (\ind{\tau^{D_1}\le \tau_2}e^{-r_1\tau^{D_1}} g_1(X_{\tau^{D_1}}) + \ind{ \tau^{D_1} >\tau_2} e^{-r_1\tau_2}f_1(X_{\tau_2}) ) \\
    &+\ind{X_{\tau^D} \in S_3} e^{-r_1\tau^{D_1}} \Ex_{X_{\tau^{D_1}}} [\ind{\sigma_{X_0}>\tau_2} e^{-r_1\tau_2} f_1(X_{\tau_2}) + \ind{ \sigma_{X_0}\le \tau_2} e^{-r_1 \sigma_{X_0}}V_U^1(X_{\sigma_{X_0}},\tau_2)] \\
    =& \Ex_x[\ind{X_{\tau^{D_1}}\not \in S_3} (\ind{\tau^{D_1}\le \tau_2}e^{-r_1\tau^{D_1}} V^1_U(X_{\tau^{D_1}},\tau_2) + \ind{ \tau^{D_1} >\tau_2} e^{-r_1\tau_2}f_1(X_{\tau_2}) ) \\
    &+\ind{X_{\tau^D} \in S_3} e^{-r_1\tau^{D_1}} V^1_U(X_{\tau^{D_1}},\tau_2)] \\
    =&J^1(x,\tau^{D_1},\tau_2)=V^1_U(x,\tau_2)
\end{align*} for all $x\in U$.
Recursively set $\sigma^n:=\sigma^{n-1}+ \ind{X_{\tau^{D_1}}\in S_3} \theta_{\eta^{S_3}} \circ \theta_{\sigma_{X_0}}\circ \sigma^{n-1}$. Analogous to the previous calculation we obtain $J^1(x,\sigma^n,\tau_2)=V^1_U(x,\tau_2)$. Since $U$ is assumed to be finite, Assumption \eqref{B1} yields $\PR_x(\exists N\in \N : \sigma_n =\tau^{S^c_2}\forall n\ge N)=1$. Thus, dominated convergence yields
\begin{align*}
    V^1_U(x,\tau_2)= \lim_{n\to \infty} J^1(x,\sigma^n,\tau_2) = J^1(x,\tau^{S^c_2},\tau_2)
\end{align*} for all $x\in U$. In classical Markovian stopping, given an optimal stopping time, optimality carries over to all smaller stopping times that stop exclusively on the set where payoff and value function coincide. We use the same principle to show that all $\tau'\in T_2$ are in fact optimal. By construction $\tau'\le \tau^{S^c_2}$ and thus $\tau'+\theta_{\tau'}\circ \tau^{S^c_2}=\tau^{S^c_2}$. Using that $V^1(X_{\tau'})=g_1(X_{\tau'})$ and $V^1(X_{\tau^{S^c_2}})=g_1(X_{\tau^{S^c_2}})$ as $\tau',\tau^{S^c_2}\in T_2$, by the Markov property we obtain
\begin{align*}
    &J^1(x,\tau',\tau_2)\\
    =&\Ex_x[\ind{\tau' > \tau_2}e^{-r_1\tau_2}f_1(X_{\tau_2})+\ind{\tau'\le \tau_2} e^{-r_1\tau'} g_1(X_{\tau'})]\\
    =&\Ex_x[\ind{\tau' > \tau_2}e^{-r_1\tau_2}f_1(X_{\tau_2})+\ind{\tau'\le \tau_2} e^{-r_1\tau'}V^1_U(X_{\tau'})]\\
    =&\Ex_x[\ind{\tau' > \tau_2}e^{-r_1\tau_2}f_1(X_{\tau_2})+\ind{\tau'\le \tau_2} e^{-r_1\tau'}\Ex_{X_{\tau'}}[\ind{\tau^{S^c_2} > \tau_2}e^{-r_1\tau_2}f_1(X_{\tau_2})+\ind{\tau^{S^c_2}\le \tau_2} e^{-r_1\tau^{S^c_2}} g_1(X_{\tau^{S^c_2}})]]\\
    =&\Ex_x[\ind{\tau' > \tau_2}e^{-r_1\tau_2}f_1(X_{\tau_2})+\ind{\tau'\le \tau_2} e^{-r_1\tau'}\\
    &\cdot\Ex_{x}[\ind{\theta_{\tau'}\circ\tau^{S^c_2} > \theta_{\tau'}\circ\tau_2}e^{-r_1\theta_{\tau'}\circ\tau_2} f_1(X_{\theta_{\tau'}\circ\tau_2}+\tau')+\ind{\theta_{\tau'} \circ\tau^{S^c_2} \le \theta_{\tau'}\circ \tau_2} e^{-r_1\theta_{\tau'}\circ\tau^{S^c_2}} g_1(X_{\theta_{\tau'}\circ\tau^{S^c_2}}+\tau') \vert \f_{\tau'} ]]\\
    =&\Ex_x[\ind{\tau' > \tau_2}e^{-r_1\tau_2}f_1(X_{\tau_2})+ \ind{\tau'\le \tau_2 < \theta_{\tau'}\circ\tau^{S^c_2}+\tau'} e^{-r_1 \tau'-r_1 \theta_{\tau'}\circ\tau_2 } f_1(X_{\theta_{\tau'}\circ\tau_2+\tau'}) \\
    &+ \ind{\tau'+\theta_{\tau'}\circ \tau^{S^c_2}\le \tau_2} e^{-r_1\tau' -r_1\theta_{\tau'}\circ \tau^{S^c_2} }g_1(X_{\tau'+\theta_{\tau'}\circ \tau^{S^c_2}})]\\
    =& J^1(x,\tau^{S^c_2},\tau_2)=V^1_U(x,\tau_2)
\end{align*} for all $x\in U$, i.e.\ $\tau'\in T_1$.

\end{proof}

The subsequent lemma captures the fact that two Markovian randomized strategies with independent randomization will, with probability 1, never stop at the same time while $X$ is outside the closure of the stopping region of either strategy.  This lemma turns out to be a useful tool for the characterization of the continuity points of the mappings $J^1, J^2$, see Lemma \ref{MC}.

In accordance with the notations introduced in Lemma \ref{aux1} and the proof of Theorem \ref{comp2}, we start by introducing a notation that will be used for the remainder of this paper. 
Given any $x \in I$ and a Markovian randomized time $\tau=\tau^{D,\lambda} \in \mathcal{Z}$ we set
\begin{align}
    L^\tau_x:=  \sup( (I \setminus D) \cap (-\infty,x]), \quad R^\tau_x:= \inf((I \setminus D) \cap [x,\infty)). \notag
\end{align} Here we rely on the conventions $\inf \varnothing := \infty, \sup \varnothing := -\infty$.

\begin{lemma}\label{qi}(quasi independence of Markovian randomized times)\\
Let $\tau_1:=\tau^{D_1,\lambda_1}\in \mathcal{Z}(E_1)$, $ \tau_2:= \tau^{D_2,\lambda_2} \in \mathcal{Z}(E_2)$ and $x \in I$. 
\begin{enumerate}[(i)]
    \item \label{qi1} If $L^{\tau_1}_x \neq L^{\tau_2}_x$ and $R^{\tau_1}_x \neq R^{\tau_2}_x$, then $\PR_x(\tau_1 = \tau_2) =0$.
    \item \label{qi2} If $L^{\tau_1}_x \neq L^{\tau_2}_x$, then $\PR_x(\eta^{R^{\tau_1}_x} > \tau_1,\tau_1 = \tau_2) =0$.
    \item \label{qi3} If $R^{\tau_1}_x \neq R^{\tau_2}_x$, then $\PR_x(\eta^{L^{\tau_1}_x} > \tau_1 ,\tau_1 = \tau_2) =0$.    
\end{enumerate}
\end{lemma}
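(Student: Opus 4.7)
The natural approach is to condition on $\f^X_\infty$ and exploit the conditional independence of $\tau_1,\tau_2$ inherited from the independence of $E_1,E_2$. Conditional on $\f^X_\infty$, the map $t\mapsto A^{\tau_i}_t$ is continuous on $[0,\tau^{D_i})$ (by joint continuity of the local times together with Radon-finiteness of $\lambda_i$ on compact subsets of $D_i$), so the conditional law of $\tau_i$ has no atoms on $[0,\tau^{D_i})$ and carries a single atom at $\tau^{D_i}$ of mass $e^{-A^{\tau_i}_{\tau^{D_i}-}}$. By Fubini,
\[
\PR_x(\tau_1=\tau_2\mid \f^X_\infty)=\ind{\tau^{D_1}=\tau^{D_2}}\,e^{-A^{\tau_1}_{\tau^{D_1}-}-A^{\tau_2}_{\tau^{D_2}-}},
\]
the "continuous--continuous" and "continuous--atom" cross terms vanishing by absolute continuity. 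The proof thus reduces, on the event specified in each part, to verifying that $\tau^{D_1}\neq \tau^{D_2}$.

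For (i), I would observe that the connected component of $x$ in $D_i$ is the open interval $(L^{\tau_i}_x,R^{\tau_i}_x)$ (degenerating to $\{x\}$ if $x\notin D_i$), and that $\tau^{D_i}$ is the first exit of $X$ from this interval. Setting $L:=\max\{L^{\tau_1}_x,L^{\tau_2}_x\}$ and $R:=\min\{R^{\tau_1}_x,R^{\tau_2}_x\}$, the assumptions $L^{\tau_1}_x\neq L^{\tau_2}_x$ and $R^{\tau_1}_x\neq R^{\tau_2}_x$ ensure that each endpoint of the intersection $(L,R)$ is a boundary point of exactly one of the two components. At the first exit of $X$ from $(L,R)$ only one of $\tau^{D_1},\tau^{D_2}$ is triggered, forcing the other to be strictly larger; the degenerate cases $x\notin D_1\cap D_2$ yield $\tau^{D_i}=0$ for one index only, while $\tau^{D_j}$ for the other index is strictly positive. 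The event $\{\tau^{D_1}=\tau^{D_2}=\infty\}$ is ruled out analogously because $X$ would have to stay in $(L,R)\subsetneq I$ forever, which a regular diffusion on the bounded interval under consideration does not do.

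For (ii), the event $\{\eta^{R^{\tau_1}_x}>\tau_1\}$ forces $x\in D_1$ (otherwise $\eta^{R^{\tau_1}_x}=0=\tau_1$), and moreover if $\tau_1$ fires via its atom then $\tau^{D_1}=\eta_{L^{\tau_1}_x}$. Two cases then arise: if $L^{\tau_1}_x<L^{\tau_2}_x$, continuity of $X$ forces it to cross $L^{\tau_2}_x$ strictly before reaching $L^{\tau_1}_x$, so $\tau^{D_2}\le \eta_{L^{\tau_2}_x}<\eta_{L^{\tau_1}_x}=\tau^{D_1}$; if $L^{\tau_1}_x>L^{\tau_2}_x$, then $L^{\tau_1}_x\in (L^{\tau_2}_x,R^{\tau_2}_x)$ lies in the interior of the $D_2$-component of $x$, so continuity of $X$ again yields $\tau^{D_2}>\tau^{D_1}$. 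Either way $\tau^{D_1}\neq \tau^{D_2}$ on the relevant event, and the non-atomic contributions vanish for the Fubini reason recalled above. Part (iii) follows by the obvious left-right symmetry.

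The main obstacle is the opening conditional-distribution step: one has to verify carefully that $\tau_i$ is atomless on $[0,\tau^{D_i})$ despite $\lambda_i$ being an arbitrary Radon measure that may concentrate mass on individual points that $X$ visits repeatedly. The key input is the continuity of $A^{\tau_i}$ on $[0,\tau^{D_i})$, which follows from the joint continuity of $(t,y)\mapsto l^y_t$ together with dominated convergence against $\lambda_i$ on compact subsets of $D_i$. Once this reduction to $\{\tau^{D_1}=\tau^{D_2}\}$ is in place, the geometric argument via the intersection $(L,R)$ and the ordering of the $L^{\tau_i}_x$, $R^{\tau_i}_x$ is elementary.
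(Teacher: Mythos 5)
Your proof is correct and follows essentially the same route as the paper's: the paper decomposes $\{\tau_1=\tau_2\}$ into the four events according to whether each $\tau_i$ fires via its continuous part or at $\tau^{D_i}$, kills the first three by independence of $E_i$ from the ($\f^X_\infty$- resp. $\sigma(\f^X_\infty\cup\sigma(E_{3-i}))$-measurable) additive functionals, and empties the atom--atom event by the same disjointness of the exit points $\{L^{\tau_1}_x,R^{\tau_1}_x\}$ and $\{L^{\tau_2}_x,R^{\tau_2}_x\}$ that you use. Your packaging of this via the explicit conditional law $\ind{\tau^{D_1}=\tau^{D_2}}e^{-A^{\tau_1}_{\tau^{D_1}-}-A^{\tau_2}_{\tau^{D_2}-}}$ is only a cosmetic reorganization of the same argument.
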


\begin{proof}
    \eqref{qi1} Set $S:=(L^{\tau_1}_x, R^{\tau_1}_x)$, $T:=(L^{\tau_2}_x, R^{\tau_2}_x)$. We start by gathering some properties of Markovian randomized times to use later on in this proof.

    By definition we have
    \begin{align}
        &\{ \tau_1 < \tau^S \} \subset \{ A^{\tau_1}_{\tau_1} < \infty\} \quad \text{and}\notag\\
        &\{ \tau_2 < \tau^T \} \subset \{ A^{\tau_2}_{\tau_2} < \infty\}. \label{prop1}
    \end{align}
    Moreover,
    \begin{align}
        &\{\tau_1 =\tau_2\} \subset \{ A^{\tau_1}_{\tau_2} \ge E_1 \} \cap \{ A^{\tau_1}_{\tau_2-\eps}< E_1 \,\, \forall \eps>0 \} \quad \text{and}\notag\\
        & \{\tau_2 =\tau_1\} \subset \{ A^{\tau_2}_{\tau_1} \ge E_2 \} \cap \{ A^{\tau_2}_{\tau_1-\eps}< E_2 \,\, \forall \eps>0 \}. \label{prop2}
    \end{align}
    The mappings
    \begin{align*}
        &[0, \tau^S) \ni t \mapsto A^{\tau_1}_t \in [0,\infty) \quad \text{and} \quad
        [0, \tau^T) \ni t \mapsto A^{\tau_2}_t \in [0,\infty).
    \end{align*} are continuous, while 
    \begin{align*}
        \tau^S:= \inf\{ t \ge 0: A^{\tau_1}_t = \infty\} \quad \tau^T:= \inf\{ t \ge 0: A^{\tau_2}_t = \infty\}.
    \end{align*} We conclude
    \begin{align}
        &\{ \tau_1 =\tau_2 \} \cap \{ \tau_1 < \tau^S\} \cap \{ \tau_2 = \tau^T\} \notag\\
        \subset& \{ \tau_1 = \tau^T\} \cap \{ \tau_1< \tau^S\} \notag\\
        \stackrel{\eqref{prop1}}{\subset}& \{ \tau_{1} = \tau^T \} \cap \{ A^{\tau_1}_{\tau^T} =  A^{\tau_1}_{\tau_1}<\infty\}\notag\\
        \stackrel{\eqref{prop2}}{\subset}& \{ A^{\tau_1}_{\tau^T} \ge E_1\} \cap \{ A^{\tau_1}_{\tau^T- \eps} < E_1 \,\, \forall \eps >0 \} \cap \{ A^{\tau_1}_{\tau^T}< \infty \} \notag\\
        \subset& \{ A^{\tau_1}_{\tau^T} = E_1 \} \label{res1}
    \end{align} and analogously
    \begin{align}
        \{ \tau_1 =\tau_2 \} \cap \{ \tau_1 = \tau^S\} \cap \{ \tau_2 < \tau^T\} \subset \{ A^{\tau_2}_{\tau^S} = E_2 \}. \label{res2}
    \end{align} Similarly,
    \begin{align}
        &\{ \tau_1 =\tau_2 \} \cap \{ \tau_1 < \tau^S\} \cap \{ \tau_2 < \tau^T\} \notag\\
        \stackrel{\eqref{prop1}}{\subset}& \{ \tau_1 =\tau_2 \} \cap \{ A^{\tau_1}_{\tau_2} =  A^{\tau_1}_{\tau_1}< \infty\}\notag \\
        \stackrel{\eqref{prop2}}{\subset}& \{ A^{\tau_1}_{\tau_2} \ge E_1\} \cap \{ A^{\tau_1}_{\tau_2-\eps} < E_1\,\, \forall \eps>0\} \cap \{ A^{\tau_1}_{\tau_2} < \infty\}\notag\\
        \subset& \{ A^{\tau_1}_{\tau_2} = E_1\} \label{res3}
    \end{align} Since $X_{\tau^S} \in \{ L^{\tau_1}_x, R^{\tau_1}_x\}$ and $X_{\tau^T} \in \{ L^{\tau_2}_x, R^{\tau_2}_x\}$, the assumption yields
    \begin{align}
        \{ \tau_1 = \tau_2 \} \cap \{ \tau_1 = \tau^S \} \cap \{ \tau_2 = \tau^T\} \subset \{\tau^S = \tau^T \} = \varnothing. \label{res4}
    \end{align} For the final estimate, we start by applying \eqref{res1}, \eqref{res2}, \eqref{res3} and \eqref{res4}. Then we find that the conditional expectations vanish since $A^{\tau_1}_{\tau^T}$ and $A^{\tau_2}_{\tau^S}$ are $\f^X_\infty$-measurable and thus independent from $\sigma(E_1,E_2)$ and $A^{\tau_1}_{\tau_2}$ is $\sigma(\f^X_\infty \cup \sigma(E_2))$-measurable and thus independent from $\sigma(E_1)$ while $E_1,E_2$ put no weight in singletons.
    \begin{align}
        &\PR_x(\tau_1 = \tau_2 ) \notag\\
        = &\PR_x( \tau_1 =\tau_2, \tau_1 < \tau^S, \tau_2< \tau^T)+ \PR_x( \tau_1 =\tau_2, \tau_1 = \tau^S, \tau_2< \tau^T)\notag\\
        &+ \PR_x( \tau_1 =\tau_2, \tau_1 < \tau^S, \tau_2= \tau^T)+ \PR_x( \tau_1 =\tau_2, \tau_1 = \tau^S, \tau_2= \tau^T)\notag\\
        \le& \Ex_x[\ind{A^{\tau_1}_{\tau^T} = E_1 }]+ \Ex_x[\ind{A^{\tau_2}_{\tau^S} = E_2 }] + \Ex_x[\ind{A^{\tau_1}_{\tau^2} = E_1 }] \notag\\
        =&  \Ex_x[\Ex_x[\ind{A^{\tau_1}_{\tau^T} = E_1 }\vert \f^X_\infty] ] +  \Ex_x[\Ex_x[\ind{A^{\tau_2}_{\tau^S} = E_2 } \vert \f^X_\infty]] +  \Ex_x[\Ex_x[\ind{A^{\tau_1}_{\tau^2} = E_1 } \vert \sigma(\f^X_\infty \cup \sigma(E_2))]]\notag\\
        =&0 \label{res5}
    \end{align} To show \eqref{qi2} and \eqref{qi3}, we can proceed analogously replacing \eqref{res4} by
    \begin{align*}
        &\{\tau_1 = \tau_2\} \cap \{\tau_1 < \eta^{R^{\tau_1}_x}\} \cap \{ \tau_1 = \tau^S \} \cap \{ \tau_2 = \tau^T\} \subset \{\tau^S = \tau^T \} \cap \{\tau_1 < \eta^{L^{\tau_1}_x}\}= \varnothing \,\,\, \text{or}\\
        &\{\tau_1 = \tau_2\} \cap \{\tau_1 < \eta^{L^{\tau_1}_x}\} \cap \{ \tau_1 = \tau^S \} \cap \{ \tau_2 = \tau^T\} \subset \{\tau^S = \tau^T \} \cap \{\tau_1 < \eta^{L^{\tau_1}_x}\}= \varnothing
    \end{align*} respectively. Instead of \eqref{res5} we have
    \begin{align*}
        &\PR_x(\tau_1 = \tau_2 ,\tau_1 < \eta^{L^{\tau_1}_x}) \\
        \le &\PR_x( \tau_1 =\tau_2, \tau_1 < \tau^S, \tau_2< \tau^T)+ \PR_x( \tau_1 =\tau_2, \tau_1 = \tau^S, \tau_2< \tau^T)\\
        &+ \PR_x( \tau_1 =\tau_2, \tau_1 < \tau^S, \tau_2= \tau^T)+ \PR_x( \tau_1 =\tau_2, \tau_1 = \tau^S, \tau_2= \tau^T,\tau_1 < \eta^{L^{\tau_1}_x}) =...=0
    \end{align*}
    or
    \begin{align*}
        &\PR_x(\tau_1 = \tau_2 ,\tau_1 < \eta^{R^{\tau_1}_x}) \\
        \le &\PR_x( \tau_1 =\tau_2, \tau_1 < \tau^S, \tau_2< \tau^T)+ \PR_x( \tau_1 =\tau_2, \tau_1 = \tau^S, \tau_2< \tau^T)\\
        &+ \PR_x( \tau_1 =\tau_2, \tau_1 < \tau^S, \tau_2= \tau^T)+ \PR_x( \tau_1 =\tau_2, \tau_1 = \tau^S, \tau_2= \tau^T,\tau_1 < \eta^{R^{\tau_1}_x}) =...=0
    \end{align*} respectively to finish the proof.
\end{proof}

\begin{lemma}\label{MC}($\mathcal{M}$-continuity of $J$)\\
    Assume \eqref{A} and either \eqref{B1} or \eqref{C1}, \eqref{C2}, \eqref{C3}. Let $U \subset I$ finite, $x \in I$, $(\tau^1_n,\tau^2_n)_{n \in \N} \in (\mathcal{Z}_U(E_1) \times \mathcal{Z}_U(E_2))^{\N}$ and $(\tau^1,\tau^2) \in \mathcal{Z}_U(E_1) \times \mathcal{Z}_U(E_2)$ with $(\tau^1_n,\tau^2_n) \stackrel{n \to \infty}{\longrightarrow} (\tau^1,\tau^2)$ in the $\mathcal{M}$-topology. 
    \begin{enumerate}[(i)]
        \item \label{MC1} If $L^{\tau^1}_x \neq L^{\tau^2}_x$ whenever $f_1(L^{\tau^1}_x) > g_1(L^{\tau^1}_x)$ and $R^{\tau^1}_x \neq R^{\tau^2}_x$ whenever $f_1(R^{\tau^1}_x) > g_1(R^{\tau^1}_x)$ then $$J^1(x,\tau^1_n, \tau^2_n) \stackrel{n \to \infty}{\longrightarrow} J^1(x,\tau^1,\tau^2)$$ and $J^1(x,\bigcdot, \bigcdot)$ is continuous in $(\tau^1,\tau^2)$.
        \item \label{MC2}If $L^{\tau^1}_x \neq L^{\tau^2}_x$ whenever $f_2(L^{\tau^2}_x) > g_2(L^{\tau^2}_x)$ and $R^{\tau^1}_x \neq R^{\tau^2}_x$ whenever $f_2(R^{\tau^2}_x) > g_2(R^{\tau^2}_x)$ then $$J^2(x,\tau^1_n, \tau^2_n) \stackrel{n \to \infty}{\longrightarrow} J^2(x,\tau^1, \tau^2)$$ and $J^2(x,\bigcdot, \bigcdot)$ is continuous in $(\tau^1,\tau^2)$.
    \end{enumerate}  
\end{lemma}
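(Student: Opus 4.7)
The plan is to prove (i) via dominated convergence; part (ii) follows by a symmetric argument with the roles of the two players interchanged. Assumption (B) renders the integrand defining $J^1$ bounded (as $g_1,f_1$ are continuous on the compact interval $I$), so it suffices to establish $\PR_x$-almost sure convergence
\[
\Phi_n:=e^{-r_1\tau^1_n}g_1(X_{\tau^1_n})\ind{\tau^1_n\le\tau^2_n}+e^{-r_1\tau^2_n}f_1(X_{\tau^2_n})\ind{\tau^1_n>\tau^2_n}\longrightarrow\Phi,
\]
where $\Phi$ is the analogous expression in $(\tau^1,\tau^2)$. Finiteness of $U$ makes the $\mathcal{M}$-topology on $\mathcal{Z}_U(E_1)\times\mathcal{Z}_U(E_2)$ metrizable, so sequential continuity at $(\tau^1,\tau^2)$ automatically promotes to continuity there.

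First I would invoke Lemma \ref{mcont}(iii) and (iv) coordinatewise to obtain $\tau^i_n\to\tau^i$ and $X_{\tau^i_n}\to X_{\tau^i}$ $\PR_x$-a.s.\ for both $i\in\{1,2\}$. On $\{\tau^1\neq\tau^2\}$ the sign of $\tau^1_n-\tau^2_n$ eventually matches the sign of $\tau^1-\tau^2$, so both indicators stabilize; continuity of $g_1,f_1$ together with the pathwise convergences then yields $\Phi_n\to\Phi$ on this event.

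The heart of the proof is the event $\{\tau^1=\tau^2\}$, where the two indicators may oscillate along the sequence. Inspecting the proof of Lemma \ref{qi}, the inclusions \eqref{res1}, \eqref{res2} and \eqref{res3} combined with the independence of $E_1,E_2$ from $X$ and from each other show -- with no assumption on the relative position of $L^{\tau^i}_x$ and $R^{\tau^i}_x$ -- that a.s.\ on $\{\tau^1=\tau^2\}$ both $\tau^1$ and $\tau^2$ equal the first exit time of $X$ from the connected component of $D_1$, respectively $D_2$, containing $x$. Therefore $X_{\tau^1}\in\{L^{\tau^1}_x,R^{\tau^1}_x\}$ and $X_{\tau^2}\in\{L^{\tau^2}_x,R^{\tau^2}_x\}$, and since $\tau^1=\tau^2$ forces $X_{\tau^1}=X_{\tau^2}$ while $L^{\tau^i}_x\le x\le R^{\tau^i}_x$, this common value must coincide with $L^{\tau^1}_x=L^{\tau^2}_x$ or $R^{\tau^1}_x=R^{\tau^2}_x$. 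The hypothesis of (i), applied contrapositively and combined with assumption (A), then forces $f_1(X_{\tau^1})=g_1(X_{\tau^1})$ at this common value.

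Consequently, on $\{\tau^1=\tau^2\}$ the $\Phi_n$-value for either active indicator converges to the same limit $e^{-r_1\tau^1}g_1(X_{\tau^1})=e^{-r_1\tau^1}f_1(X_{\tau^1})=\Phi$, so $\Phi_n\to\Phi$ $\PR_x$-a.s.\ on all of $\Omega$; dominated convergence now yields $J^1(x,\tau^1_n,\tau^2_n)\to J^1(x,\tau^1,\tau^2)$, and metrizability promotes this to continuity of $J^1(x,\cdot,\cdot)$ at $(\tau^1,\tau^2)$. The main obstacle, as the sketch makes plain, is the bookkeeping on $\{\tau^1=\tau^2\}$: one must salvage those pieces of the proof of Lemma \ref{qi} that survive without the full separation hypothesis, and then bridge the residual gap between the $g_1$- and $f_1$-branches via the equality case in assumption (A). Everything else is essentially routine.
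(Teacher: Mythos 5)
Your argument is correct, and it reaches the conclusion by a slightly different decomposition than the paper. The paper splits the expectation according to whether the common boundary point is reached before both players stop, i.e.\ on $\{\eta^{L^{\tau^1}_x}\le\tau^1\wedge\tau^2\}$ versus its complement, and then cites Lemma \ref{qi}\eqref{qi2}/\eqref{qi3} as stated to kill the diagonal on the complement; it also runs a w.l.o.g.\ case analysis over which of the endpoints coincide. You instead split directly on $\{\tau^1=\tau^2\}$ and, as you correctly note, this requires a fact not contained in the \emph{statement} of Lemma \ref{qi}, namely that $\PR_x$-a.s.\ on the diagonal both times equal the first exit times from the respective connected components. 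That extraction is legitimate: the inclusions \eqref{res1}, \eqref{res2}, \eqref{res3} and the ensuing conditioning argument use only the independence of $E_1,E_2$ from $\f^X_\infty$ and from each other, not the separation hypothesis, which enters only through \eqref{res4}. Your subsequent reduction (the common stopped value must be a coinciding left or right endpoint, the degenerate case $L^{\tau^1}_x=R^{\tau^2}_x$ forcing everything to equal $x$, and the contrapositive of the hypothesis together with \eqref{A} forcing $f_1=g_1$ there) is sound, and the oscillating indicator causes no harm since both branches converge to the same limit. What your route buys is a uniform treatment of all endpoint configurations in one stroke; what it costs is having to reopen the proof of Lemma \ref{qi} rather than quoting it. The remaining ingredients — Lemma \ref{mcont}\eqref{mcont3},\eqref{mcont4}, boundedness of $f_1,g_1$ on the compact $I$, dominated convergence, and metrizability of the $\mathcal{M}$-topology to upgrade sequential continuity — coincide with the paper's.
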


\begin{remark}\label{remMC}
    Under assumption \eqref{B1} we set
    \begin{align*}
        &g_i(-\infty):=g^i(\inf I) &&g_i(\infty):=g^i(\sup I)\\
        &f_i(-\infty):=f^i(\inf I) &&f_i(\infty):=f^i(\sup I)
    \end{align*} and under \eqref{C1}, \eqref{C2}, \eqref{C3} we set $g_i(-\infty)=g_i(\infty)=f_i(-\infty)=f_i(\infty)=0$
    for $i=1,2$. Like that Lemma \ref{MC} allows for $L^{\tau^1}_x = L^{\tau^2}_x\not\in I$, $R^{\tau^1}_x = R^{\tau^2}_x\not \in I$ or $\PR_x( \tau^1 \wedge \tau^2= \infty) >0$. Additionally Assumption \eqref{A} could be dropped if we replace the $f_i(L^{\tau^1}_x) > g_i(L^{\tau^1}_x)$ and $f_i(R^{\tau^1}_x) > g_i(R^{\tau^1}_x)$, $i=1,2$ part by the more general formulations $f_i(L^{\tau^1}_x) \neq g_i(L^{\tau^1}_x)$ and $f_i(R^{\tau^1}_x) \neq g_i(R^{\tau^1}_x)$, $i=1,2$
\end{remark}

\begin{proof} (of Lemma \ref{MC})\\
    We only show \eqref{MC1} as \eqref{MC2} can be shown in exactly the same way. W.l.o.g.\ we only treat the case $L^{\tau^1}_x = L^{\tau^2}_x$, $R^{\tau^1}_x \neq R^{\tau^2}_x$. The other cases $L^{\tau^1}_x \neq L^{\tau^2}_x$, $R^{\tau^1}_x = R^{\tau^2}_x$ and $L^{\tau^1}_x = L^{\tau^2}_x$, $R^{\tau^1}_x = R^{\tau^2}_x$ are analogous using the appropriate parts of Lemma \ref{qi}. We have 
    \begin{align*}
        &J^1(x,\tau^1,\tau^2) 
        = \Ex_x[\ind{\tau^1 \le \tau^2} e^{-r_1 \tau^1} g(X_{\tau^1}) +\ind{\tau^1 > \tau^2} e^{-r_1 \tau^2} f_1(X_{\tau^2})]\\
        =&\Ex_x[(\ind{\tau^1 \le \tau^2} e^{-r_1 \tau^1} g(X_{\tau^1}) +\ind{\tau^1 > \tau^2} e^{-r_1 \tau^2} f_1(X_{\tau^2})) (\ind{\eta^{L^{\tau^1}_x} \le \tau^1 \wedge \tau^2} +\ind{\eta^{L^{\tau^1}_x} > \tau^1 \wedge \tau^2})],\\
        &J^1(x,\tau^1,\tau^2) 
        = \Ex_x[\ind{\tau^1_n \le \tau^2_n} e^{-r_1 \tau^1_n} g(X_{\tau^1_n}) +\ind{\tau^1_n > \tau^2_n} e^{-r_1 \tau^2_n} f_1(X_{\tau^2_n})]\\
        =&\Ex_x[(\ind{\tau^1_n \le \tau^2_n} e^{-r_1 \tau^1_n} g(X_{\tau^1_n}) +\ind{\tau^1_n > \tau^2_n} e^{-r_1 \tau^2_n} f_1(X_{\tau^2_n})) (\ind{\eta^{L^{\tau^1}_x} \le \tau^1 \wedge \tau^2} +\ind{\eta^{L^{\tau^1}_x} > \tau^1 \wedge \tau^2})].
    \end{align*} By Lemma \ref{qi} \eqref{qi3}, we have $\PR_x( \eta^{L^{\tau^1}_x} > \tau^1 \wedge \tau^2, \tau^1 = \tau^2) =0$. Together with Lemma \ref{mcont} \eqref{mcont3}, \eqref{mcont4}
    \begin{align*}
        &(\ind{\tau^1_n \le \tau^2_n} e^{-r_1 \tau^1_n} g(X_{\tau^1_n}) +\ind{\tau^1_n > \tau^2_n} e^{-r_1 \tau^2_n} f_1(X_{\tau^2_n})) \ind{\eta^{L^{\tau^1}_x} >  \tau^1 \wedge \tau^2} \\
        \stackrel{n \to \infty}{\longrightarrow}& (\ind{\tau^1 \le \tau^2} e^{-r_1 \tau^1} g(X_{\tau^1}) +\ind{\tau^1 > \tau^2} e^{-r_1 \tau^2} f_1(X_{\tau^2})) \ind{\eta^{L^{\tau^1}_x} >  \tau^1 \wedge \tau^2} \quad \PR_x\text{-a.s.}
    \end{align*} 
    The assumption yields
    \begin{align*}
        &\ind{\eta^{L^{\tau^1}_x} \le \tau^1 \wedge \tau^2 }\ind{\tau^1 = \tau^2 } e^{-r_1 \tau^1} g_1(X_{\tau^1})
        =\ind{\eta^{L^{\tau^1}_x} \le \tau^1 \wedge \tau^2 }\ind{\tau^1 = \tau^2 } e^{-r_1 \tau^1} g_1(L^{\tau^1}_x)\\
        =&\ind{\eta^{L^{\tau^1}_x} \le \tau^1 \wedge \tau^2 }\ind{\tau^1 = \tau^2 } e^{-r_1 \tau^2} f_1(L^{\tau^1}_x)
        =\ind{\eta^{L^{\tau^1}_x} \le \tau^1 \wedge \tau^2 }\ind{\tau^1 = \tau^2 } e^{-r_1 \tau^2} f_1(X_{\tau^2}).
    \end{align*} We infer
    \begin{align*}
        &(\ind{\tau^1_n \le \tau^2_n} e^{-r_1 \tau^1_n} g_1(X_{\tau^1_n}) +\ind{\tau^1_n > \tau^2_n} e^{-r_1 \tau^2_n} f_1(X_{\tau^2_n})) \ind{\eta^{L^{\tau^1}_x} \le  \tau^1 \wedge \tau^2} \\
        \stackrel{n \to \infty}{\longrightarrow}& (\ind{\tau^1 \le \tau^2} e^{-r_1 \tau^1} g_1(X_{\tau^1}) +\ind{\tau^1 > \tau^2} e^{-r_1 \tau^2} f_1(X_{\tau^2})) \ind{\eta^{L^{\tau^1}_x} \le  \tau^1 \wedge \tau^2} \quad \PR_x\text{-a.s.}
    \end{align*} The claim follows via dominated convergence using that $U$ is assumed finite, so $f_1\vert_U$ and $g_1\vert_U$ are bounded. Since the $\mathcal{M}$-topology is metric this means $J^1(x, \bigcdot, \bigcdot)$ is continuous in $(\tau^1,\tau^2)$.
\end{proof}

\begin{definition}(best response mapping)\\
    Let $U \subset I$ finite. We call the mapping $\Tilde{\Phi}:=\Tilde{\Phi}_U: \mathcal{Z}_U(E_1)\times \mathcal{Z}_U(E_2)\to \mathcal{P}(\mathcal{Z}_U(E_1)\times \mathcal{Z}_U(E_2))$, where $\mathcal{P}$ denotes the power set, be given by
    \begin{align}
        (\tau_1,\tau_2) \mapsto \bigg\{&(\tau_1',\tau_2')\in  \mathcal{Z}_U(E_1)\times\mathcal{Z}_U(E_2) \bigg\vert \notag\\
        &J^1(x,\tau_1',\tau_2) =V^1_U(x,\tau_2) := \sup_{\tau_1''\in\mathcal{T}_U\cup \mathcal{Z}_U(E_1) } J^1(x,\tau_1'',\tau_2) \,\,\forall x\in U, \label{equi}\\
        &J^2(x,\tau_1,\tau_2') =V^2_U(x,\tau_1):= \sup_{\tau_2''\in\mathcal{T}_U\cup \mathcal{Z}_U(E_2) } J^2(x,\tau_1,\tau_2'')\,\,\forall x\in U, \label{equi2}\\
        &\forall x \in U: f_1(x)\neq g_1(x), \iota_1(\tau_2)(\{x\})=\infty \Longrightarrow \iota_1(\tau_1')(\{x\})=0 ,\label{nonstop}\\
        &\forall x \in U: f_2(x)\neq g_2(x), \iota_1(\tau_1)(\{x\})=\infty \Longrightarrow \iota_1(\tau_2')(\{x\})=0\bigg\}\label{nonstop2}
    \end{align} \textit{best response (mapping)}. Based on the notation from Remark \ref{remM} \eqref{remM3} we define the so called \textit{embedded best response (mapping)} $\Phi: [0,1]^{U^2}\to \mathcal{P}([0,1]^{U^2})$ via $\Phi:= \Phi_U:=  \overline{\iota} \circ \Tilde{\Phi}_U \circ \overline{\iota}^{-1}$.
\end{definition}

\begin{remark}
    The best response mapping maps a pair of Markovian randomized times  $(\tau_1,\tau_2) \in\mathcal{Z}_U(E_1)\times \mathcal{Z}_U(E_2)$ to a set of pairs of Markovian randomized times. The first components $\tau_1'$ of the pairs $(\tau_1', \tau_2') \in \Tilde{\Phi}(\tau_1,\tau_2)$ are determined only by $\tau_2$ through the conditions \eqref{equi} and \eqref{nonstop}, while the second components $\tau_2'$ are determined only by $\tau_1$ through \eqref{equi2} and \eqref{nonstop2}. The conditions \eqref{equi}, \eqref{equi2} ensure that $\tau_1'$ resp.\ $\tau_2'$ maximize $J^1$ resp.\ $J^2$ given $\tau_2$ resp. $\tau_1$ at every $x$. In that sense $\tau_1', \tau_2'$ are best responses to the game strategies $\tau_2,\tau_1$. The conditions \eqref{nonstop} and \eqref{nonstop2} have more of a technical nature. They can be interpreted as the requirement that if the opponent stops at the first entry time into any given point and it is beneficial for us to keep going in terms of payoff, then we will do so by choosing a stopping rate of 0 at that point even though any finite rate would yield the same profit.
\end{remark}

\begin{theorem}\label{ex1}
    Assume \eqref{A}, \eqref{B1}, \eqref{B2}. Further let $U\subset I$ finite. Then, there exists a $U$-equilibrium with the properties \eqref{nonstop} and \eqref{nonstop2}.
\end{theorem}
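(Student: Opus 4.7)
The strategy is to apply Kakutani's fixed-point theorem \cite[Lemma 20.1]{osborne_rubinstein} to the embedded best-response correspondence $\Phi_U \colon [0,1]^{U^2} \to \mathcal{P}([0,1]^{U^2})$. Since $U$ is finite, the domain is a nonempty, compact, convex subset of a finite-dimensional Euclidean space, so it suffices to verify that $\Phi_U$ has nonempty, convex values and a closed graph. A fixed point $p^* \in \Phi_U(p^*)$ translates via $\overline{\iota}^{-1}$ into a pair $(\tau^1,\tau^2) \in \mathcal{Z}_U(E_1) \times \mathcal{Z}_U(E_2)$. The $U$-equilibrium property against $\mathcal{T}_U$-challengers is then automatic, because Lemma \ref{indiff1} reduces each best-response problem to a standard Markovian optimal stopping problem and Lemma \ref{indiff} \eqref{indiff2} asserts that the $\mathcal{Z}_U$-supremum coincides with the full $(\mathcal{T}_U \cup \mathcal{Z}_U)$-supremum.

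For the nonemptiness of $\Phi_U(\overline{\iota}(\tau_1,\tau_2))$, Lemma \ref{indiff} \eqref{indiff2} supplies some $\tau_1^* \in \mathcal{Z}_U(E_1)$ attaining $V^1_U(\,\cdot\,,\tau_2)$. At any $x \in U$ with $\iota_1(\tau_2)(\{x\}) = \infty$ and $f_1(x) \neq g_1(x)$, player $2$ stops instantly on hitting $x$, so by Assumption \eqref{A} every finite rate of player $1$ at $x$ yields payoff $f_1(x) > g_1(x)$; we may therefore replace the rate of $\tau_1^*$ at $x$ by $0$ without losing optimality, producing a $\tau_1'$ satisfying \eqref{nonstop}. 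The symmetric construction provides $\tau_2'$.

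Convexity of $\Phi_U$ follows from the product structure $\Phi_U(\overline{\iota}(\tau_1,\tau_2)) = \overline{\iota}(B_1(\tau_2) \times B_2(\tau_1))$ together with the indifference principle Lemma \ref{indiff} \eqref{indiff3}: two best responses of player $i$ can disagree in $\iota$-coordinate only at states where that player is indifferent in the auxiliary stopping problem, and at such states every rate is optimal, so any convex combination of the two $\iota$-representatives still represents an optimal response. The constraints \eqref{nonstop}, \eqref{nonstop2} single out the value $0$, which is trivially preserved under convex combinations.

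The main obstacle is the closed-graph property. Given $p_n \to p$ in $[0,1]^{U^2}$ and $q_n \in \Phi_U(p_n)$ with $q_n \to q$, write $\overline{\iota}^{-1}(p_n) = (\tau_n^1,\tau_n^2)$ and similarly for the responses and the limits. Lemma \ref{mcont} gives $\PR_x$-almost sure convergence of all four stopping times together with the process evaluated at them, but $J^i$ is not $\mathcal{M}$-continuous everywhere; Lemma \ref{MC} pinpoints the bad configurations as coincident left- or right-boundaries of the two continuation regions at points where $f_i \neq g_i$. Here the interplay of Assumptions \eqref{B}, \eqref{C} with the conditions \eqref{nonstop}, \eqref{nonstop2} is decisive: Assumption \eqref{C} handles boundary points collapsing into $\partial I$, while \eqref{nonstop}, \eqref{nonstop2} exclude the remaining interior scenario in which the opponent's limiting rate is $\infty$ at a payoff-discontinuity point and the respondent still puts mass there. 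Once $\mathcal{M}$-continuity of $J^i$ is secured at the limit, the optimality inequalities \eqref{equi}, \eqref{equi2} pass to the limit, and the pointwise conditions \eqref{nonstop}, \eqref{nonstop2} are closed in the $\mathcal{M}$-topology, yielding $q \in \Phi_U(p)$ and completing the proof.
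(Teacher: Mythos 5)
Your proposal follows the paper's proof essentially step for step: Kakutani's theorem applied to the embedded best response on $[0,1]^{U^2}$, nonemptiness from Lemma \ref{indiff} \eqref{indiff2} (your explicit modification at states where the opponent's rate is $\infty$ is a welcome detail), convexity from the indifference principle, and the closed graph from the continuity analysis of Lemma \ref{MC} together with \eqref{nonstop}, \eqref{nonstop2} and Assumption \eqref{C} — the latter being exactly the content of the paper's Lemma \ref{lemma_closed}.

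Two points deserve attention. First, a genuine omission: a fixed point of $\tilde\Phi$ only yields the optimality conditions \eqref{equi}, \eqref{equi2} for starting points $x\in U$, whereas the definition of a $U$-equilibrium requires \eqref{Uequi1}, \eqref{Uequi2} for \emph{all} $x\in I$; your claim that the equilibrium property is ``automatic'' does not address this. The paper closes the gap with the strong Markov property at $\eta^U$: every admissible strategy stops only in $U$, so for $x\in I\setminus U$ one has $J^1(x,\sigma^1,\tau^2)\le\Ex_x[e^{-r_1\eta^U}V^1_U(X_{\eta^U},\tau^2)]=J^1(x,\tau^1,\tau^2)$. Second, your convexity argument is looser than what is needed: ``at indifference states every rate is optimal'' does not by itself justify mixing a rate-$0$ with a rate-$\infty$ response at a single state, since optimality is a global property (the response must still stop in the argmax set $S$ with probability one before absorption at unfavourable points). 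The paper resolves this via Remark \ref{laberrhabarber}: both best responses necessarily put rate $\infty$ on the distinguished set $S^*$, so the convex combination in $\iota$-coordinates still equals $1$ there, and its remaining stopping is supported in $S$, which suffices for optimality by optional sampling.
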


The theorem is shown by applying Kakutani's fixed point theorem \cite[Lemma 20.1]{osborne_rubinstein} to the embedded best response $\Phi$. One of the most crucial  prerequisites for Kakutani's fixed point theorem is the closedness of the graph of the (embedded) best response which will be established in Lemma \ref{lemma_closed}.

\begin{lemma}\label{lemma_closed}(closedness of the graph of the best response)\\
Assume \eqref{A}, \eqref{B1}, \eqref{B2}. Let $U\subset I$ be finite. The graph of the best response mapping $\tilde \Phi$ is a closed subset of $(\mathcal{Z}_U(E_1)\times \mathcal{Z}_U(E_2))\times (\mathcal{Z}_U(E_1)\times \mathcal{Z}_U(E_2))$ (equipped with the product topology of $\mathcal{M}$-topologies on all components).
\end{lemma}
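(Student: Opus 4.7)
Since the $\mathcal{M}$-topology on $\mathcal{Z}_U(E_1)\times\mathcal{Z}_U(E_2)$ is induced by a metric for finite $U$, it suffices to prove sequential closedness. I fix sequences $(\tau_1^n,\tau_2^n)\to(\tau_1,\tau_2)$ and $(\tau_1'^n,\tau_2'^n)\to(\tau_1',\tau_2')$ with $(\tau_1'^n,\tau_2'^n)\in\tilde\Phi(\tau_1^n,\tau_2^n)$, and verify the four defining conditions of $\tilde\Phi(\tau_1,\tau_2)$ at the limit, treating the non-stopping conditions \eqref{nonstop}/\eqref{nonstop2} \emph{before} \eqref{equi}/\eqref{equi2}. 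The reason for this ordering is structural: once \eqref{nonstop} is known to hold at $(\tau_1',\tau_2)$, any coincidence $L^{\tau_1'}_x=L^{\tau_2}_x=L$ (or the analogue for $R$) entails $L\in (I\setminus D^{\tau_1'})\cap (I\setminus D^{\tau_2})$, i.e.\ $\iota_1(\tau_1')(\{L\})=\iota_1(\tau_2)(\{L\})=\infty$; together with \eqref{nonstop} this forces $f_1(L)=g_1(L)$. Thus the hypotheses of Lemma \ref{MC}\,\eqref{MC1} are automatically met for $(\tau_1',\tau_2)$, and symmetrically for $(\tau_1,\tau_2')$.

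For \eqref{nonstop}, fix $x\in U$ with $f_1(x)>g_1(x)$ (using Assumption \eqref{A}) and $\iota_1(\tau_2)(\{x\})=\infty$; write $\tau_2^n=\tau^{D^n_2,\lambda^n_2}$. If $x\notin D^n_2$ for infinitely many $n$, then \eqref{nonstop} along the sequence gives $\iota_1(\tau_1'^n)(\{x\})=0$ for those indices, and the $\mathcal M$-continuity of $\iota$ transports the equality to the limit. The delicate case is $x\in D^n_2$ eventually with $\lambda^n_2(\{x\})\to\infty$; here \eqref{nonstop} at step $n$ gives no direct constraint, so I would appeal to the optimality of $\tau_1'^n$. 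An estimate of the function $F^n$ from Lemma \ref{indiff1} exploiting that $A^{\tau_2^n}$ blows up instantly at $x$ shows $F^n(x)\to f_1(x)>g_1(x)$, hence $g_1(x)\neq F^n(x)$ for large $n$. The contrapositive of Lemma \ref{indiff}\,\eqref{indiff3} then forces the optimal rate of player 1 at $x$ into $\{0,\infty\}$. Since $V^1_U(x,\tau_2^n)\ge F^n(x)>g_1(x)$ for large $n$---the lower bound coming from Lemma \ref{indiff1} applied to the never-stop challenger $\tau^{I,0}\in\mathcal{Z}_U(E_1)$---the rate $\infty$ (which would realize only $g_1(x)$ from $x$) is strictly suboptimal, leaving $\iota_1(\tau_1'^n)(\{x\})=0$ eventually and hence $\iota_1(\tau_1')(\{x\})=0$ in the $\mathcal M$-limit. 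The argument for \eqref{nonstop2} is entirely symmetric.

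With \eqref{nonstop}/\eqref{nonstop2} at hand in the limit, Lemma \ref{MC}\,\eqref{MC1} yields $J^1(x,\tau_1'^n,\tau_2^n)\to J^1(x,\tau_1',\tau_2)$. For the best-response condition \eqref{equi}, rather than trying to control $J^1(x,\tau_1'',\tau_2^n)$ for arbitrary challengers $\tau_1''$, I would select one specific optimal response $\tau^*\in\mathcal{Z}_U(E_1)$ to $\tau_2$ (existence from Lemma \ref{indiff}\,\eqref{indiff2}) and, using Lemma \ref{indiff}\,\eqref{indiff3} inside $D^{\tau_2}$ together with a direct computation at points $y\notin D^{\tau_2}$ (where the payoff starting from $y$ is $f_1(y)$ for any finite rate of player 1 since $\tau_2=0$ from $y$, and where the rate at $y$ has no effect on payoffs starting from $x\neq y$ because $\tau_2\le\eta_y$), adjust $\tau^*$ so that $\iota_1(\tau^*)(\{y\})=0$ at every $y$ with $\iota_1(\tau_2)(\{y\})=\infty$ and $f_1(y)>g_1(y)$. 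The same boundary argument as in the first paragraph then shows that $(\tau^*,\tau_2)$ meets the hypotheses of Lemma \ref{MC}\,\eqref{MC1}, so $J^1(x,\tau^*,\tau_2^n)\to J^1(x,\tau^*,\tau_2)=V^1_U(x,\tau_2)$. Passing to the limit in $V^1_U(x,\tau_2^n)=J^1(x,\tau_1'^n,\tau_2^n)\ge J^1(x,\tau^*,\tau_2^n)$ produces $J^1(x,\tau_1',\tau_2)\ge V^1_U(x,\tau_2)$; the reverse inequality holds by definition of $V^1_U$. Condition \eqref{equi2} is obtained symmetrically.

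The main obstacle I anticipate is precisely the subcase of \eqref{nonstop} above, in which $x$ is interior to $D^n_2$ for all large $n$ but escapes the continuation region only in the limit: there \eqref{nonstop} along the sequence is vacuous, and the required constraint has to be extracted from the combination of the indifference principle (Lemma \ref{indiff}\,\eqref{indiff3}) with the fine asymptotics $F^n(x)\to f_1(x)$ forced by $\lambda^n_2(\{x\})\to\infty$. Once this point is handled, the remaining work reduces to careful bookkeeping---checking that limit coincidences of stopping boundaries are confined to $\{f_1=g_1\}$, and invoking Lemma \ref{MC}\,\eqref{MC1} both for the equilibrium pair $(\tau_1'^n,\tau_2^n)$ and for one carefully chosen optimal challenger.
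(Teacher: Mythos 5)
Your proof is correct and rests on the same pillars as the paper's own argument: Lemma \ref{MC} for the $\mathcal{M}$-continuity of $J^1$ away from boundary coincidences, the indifference principle of Lemma \ref{indiff}\,\eqref{indiff3} combined with a local-time blow-up estimate to exclude positive stopping rates of player 1 at points where the opponent's rate explodes, and Assumption \eqref{C} to dispose of coincidences at $\partial I$. The packaging differs: the paper shows that the \emph{complement} of the graph is open, by a case analysis on which of the conditions \eqref{equi}--\eqref{nonstop2} fails at $(\tau_1,\tau_2,\tau_1',\tau_2')$ and by constructing explicit neighborhoods on which the violation persists; you instead argue sequential closedness (legitimate, since the topology is metrizable for finite $U$) and verify the four conditions at the limit, establishing \eqref{nonstop}/\eqref{nonstop2} first so that the hypotheses of Lemma \ref{MC}\,\eqref{MC1} for the limit pair come for free. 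Your ``delicate case'' ($x\in D^n_2$ eventually with $\lambda^n_2(\{x\})\to\infty$) is exactly the paper's subcase $\iota_1(\sigma_2)(\{x\})<\infty$, resolved by the same dichotomy: a finite positive rate would, by indifference, make the rate-$\infty$ variant optimal with value $g_1(x)$, contradicting $V^1_U(x,\tau^2_n)\ge F^n(x)\to f_1(x)>g_1(x)$. Do note that the finiteness of $U$ is what guarantees $D^n_2\supset(x-\delta,x+\delta)\cap I$ for an $n$-independent $\delta>0$, which is needed for $\PR_x(\tau^2_n<\tau^{D^n_2})\to1$ and hence for $F^n(x)\to f_1(x)$; this deserves to be said explicitly. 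One point where your route is arguably cleaner: for passing \eqref{equi} to the limit you compare against a single fixed optimal challenger $\tau^*$, suitably adjusted to avoid simultaneous stopping, which delivers the lower semicontinuity $\liminf_{n}V^1_U(x,\tau^2_n)\ge V^1_U(x,\tau_2)$ that the paper's neighborhood argument uses implicitly but does not spell out.
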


\begin{proof}
    Fix a $(\tau_1,\tau_2),(\tau_1',\tau_2')\in \mathcal{Z}_U(E_1)\times \mathcal{Z}_U(E_2)$ such that $(\tau_1',\tau_2') \not \in \Tilde{\Phi}(\tau_1,\tau_2)$. We have to find a neighborhood $V_1 \times V_2 \times W_1\times W_2 \subset (\mathcal{Z}_U(E_1)\times \mathcal{Z}_U(E_2)) \times (\mathcal{Z}_U(E_1)\times \mathcal{Z}_U(E_2)) $ of $(\tau_1,\tau_2,\tau_1',\tau_2')$ such that for all $(\sigma_1,\sigma_2,\sigma_1',\sigma_2') \in V_1 \times V_2 \times W_1\times W_2 $ we have $(\sigma_1',\sigma_2')\not \in \Tilde{\Phi}(\sigma_1,\sigma_2)$. The assumption $(\tau_1',\tau_2') \not \in \Tilde{\Phi}(\tau_1,\tau_2)$ implies that $(\tau_1,\tau_2,\tau_1',\tau_2')$ does not satisfy one of the conditions \eqref{equi}, \eqref{equi2},\eqref{nonstop}, or \eqref{nonstop2}. By symmetry, we may assume w.l.o.g.\ that either \eqref{equi} or \eqref{nonstop} is not satisfied and construct $V_2$ and $W_1$ such that \eqref{equi} or \eqref{nonstop} are violated for any choice of $\sigma_2\in V_2$, $\sigma_1' \in W_1$. 
    
    We start by assuming that \eqref{nonstop} does not hold for $ \tau_1', \tau_2$. This means, there is some $x\in U$ such that $f_1(x)>g_1(x)$, $\iota_1(\tau_2)(\{x\}) = \infty$ and $\iota_1(\tau_1')(\{x\}) > 0$. Let $s^z=(s^z_u)_{u\in U}\in [0,\infty]^U$ be given by $s_u=0$ for all $u\neq x$ and $s_x=z\in [0,\infty]$. By Lemma \ref{mcont} $\iota^{-1}(s^z)\to \iota^{-1}(s^\infty)$ $\PR_x$-a.s.\ for $z\to \infty$. Additionally, for all $\sigma_2\in \mathcal{Z}_U(E_2)$ with $\iota_1(\sigma_2)(\{x\})\ge z$ we have $\sigma_2 \le \iota^{-1}(s^z)$. Combined with $\PR_x(\iota^{-1}(s^\infty)=0)=1$, we find $\sigma_2\to 0$ $\PR_x$-a.s. for $\iota_1(\sigma_2)(\{x\}) \to \infty$. Now, by dominated convergence
    \begin{align*}
        J^1(x,\eta^{U\setminus\{x\}},\sigma_2)=&\Ex_x[e^{-r_1\sigma_2}f_1(X_{\sigma_2})\ind{\sigma_2<\eta^{U\setminus \{x\}}}+ e^{-r_1\eta^{U\setminus \{x\}}}g_1(X_{\eta^{U\setminus \{x\}}})\ind{\sigma_2\ge\eta^{U\setminus \{x\}}}] 
        \to f_1(x) 
    \end{align*} for $\iota_1(\sigma_2)(\{x\})\to \infty$. Thus, there is $c>0$ such that $J^1(x,\eta^{U\setminus\{x\}},\sigma_2)> g_1(x)$ for all $\sigma_2\in \mathcal{Z}_U(E_2)$ with $\iota_1(\sigma_2)(\{x\})>c $. We set $ V_2:=\iota^{-1}(\{(s_u)_{u\in U}\in [0,\infty]^U: s_x>c\})$ and $W_1:=\iota^{-1}(\{ (s_u)_{u\in U}\in [0,\infty]^U: s_x\in (0,\infty)\})$. Now let $(\sigma_2,\sigma_1')\in V_2\times W_1$ arbitrary. If $\iota_1(\sigma_2)(\{x\})=\infty$, then $(\sigma_1',\sigma_2)$ violates \eqref{nonstop}. Next we show that \eqref{equi} is violated by all $(\sigma_1',\sigma_2)\in V_2\times W_1$ with $\iota_1(\sigma_2)(\{x\})<\infty $. Assuming the contrary, due to the definitions of $V_2$ and $W_1$, Lemma \ref{indiff} \eqref{weakindiff} yields $J^1(x,\sigma_1',\sigma_2)=V_U^1(x,\sigma_2)=g_1(x)$. However, by choice of $c$, since $\eta^{U\setminus\{x\}}=\tau^{(U\setminus\{x\})^c}\in \mathcal{Z}_U$ this contradicts $V^1_U(x,\tau_2) \ge J^1(x,\eta^{U\setminus\{x\}},\tau_2) >g_1(x)$.
    
    
    Now we assume that \eqref{equi} does not hold for $ \tau_1', \tau_2$. If the pair $(\tau_1', \tau_2)$ satisfies the preconditions of Lemma \ref{MC} \eqref{MC1}, the function $J^1(x,\bigcdot,\bigcdot)$ is continuous in $(\tau_1', \tau_2)$, so there is a neighborhood $V_2 \times W_1$ such that $ \vert J^1(x,\tau_1',\tau_2) -J^1(x,\sigma_1',\sigma_2) \vert  < V^1_U(x,\tau_2) - J(x,\tau_1', \tau_2)>0$, i.e.\ $(\sigma_1', \sigma_2') \not \in \Tilde{\Phi}(\sigma_1,\sigma_2)$ for all $(\sigma_1,\sigma_2,\sigma_1', \sigma_2') \in \mathcal{Z}_U(E_1)\times V_2 \times W_1 \times \mathcal{Z}_U(E_2)$. If $(\tau_1', \tau_2)$ does not satisfy the preconditions of Lemma \ref{MC} \eqref{MC1} either $L^{\tau_1'}_x = L^{\tau_2}_x$ and $f_1(L^{\tau_2}_x) > g_1(L^{\tau_2}_x)$ or $R^{\tau_1'}_x = R^{\tau_2}_x$ and $f_1(R^{\tau_2}_x) > g_1(R^{\tau_2}_x)$. In particular, by Assumption \eqref{B2} we must have $L^{\tau_1'}_x = L^{\tau_2}_x\in I$ ($R^{\tau_1'}_x = R^{\tau_2}_x\in I$) for $f_1(L^{\tau_2}_x) > g_1(L^{\tau_2}_x)$ ($f_1(R^{\tau_2}_x) > g_1(R^{\tau_2}_x)$) to hold. This means \eqref{nonstop} is violated at either $L^{\tau_2}_x$ or $R^{\tau_2}_x$.
\end{proof}

\begin{proof}(of Theorem \ref{ex1})\\
We now show that the embedded best response $\Phi$ satisfies the preconditions of Kakutani's fixed point theorem, cf.\ \cite[Lemma 20.1]{osborne_rubinstein} to show that it possesses a fixed point. First note that $[0,1]^{U^2}$ clearly satisfies the compactness requirement of the fixed point theorem. Moreover, by Lemma \ref{lemma_closed} the graph of $\tilde \Phi$ is closed. This property immediately translates to $\Phi$ as $\overline{\iota}$ is homeomorphic by definition. To check the other preconditions, let $s=(s_1,s_2)\in [0,1]^{U^2}$. By Lemma \ref{indiff} \eqref{indiff2}, the set $\Phi(s)$ is non-empty. 
We finish by showing the convexity of the image sets of $\Phi$. The best responses are characterized through Lemma \ref{indiff} \eqref{weakindiff2}. Let $\tau_i = \iota^{-1}(s_i)$. We denote the set $T_j$, $j=1,2$ from Lemma \ref{indiff} \eqref{weakindiff2} corresponding to $\tau_i$, $i=1,2$ by $T_j(\tau_i)$. By the definition of $\tilde \Phi$ and $T_1$ and the lemma we find $$\Phi(s)= \overline{\iota}(\tilde\Phi(\tau_1,\tau_2))=\overline{\iota}( T_1(\tau_2)\times T_1(\tau_1))= \overline{\iota}(T_2(\tau_2)\times T_2(\tau_1)).$$
Inspecting the definition of $T_2$, the right hand side is clearly convex.
\end{proof}

\section{Existence of equilibria: Absorbing boundary} \label{sec:final_proof}


In this section we derive a general existence result for Markovian equilibria in the sense of Definition \ref{defequi} for absorbing boundaries. Our results from Section \ref{sectopo} and Section \ref{secdisc} allow us to construct a candidate quite easily. All objects introduced in the following construction of the candidate equilibrium are fixed throughout the rest of this section. For this entire section we assume \eqref{A}, \eqref{B1}, \eqref{B2}. Those assumptions were already used to establish the main results of Section \ref{secdisc}. Here we need \eqref{A} in the proof of Lemma \ref{M1cont} \eqref{M1cont1} in order to apply Lemma \ref{genport} \eqref{genport1}. \eqref{B1} and \eqref{B2} will be used in order to construct the equilibrium candidate.

Let $(U_n)_{n \in \N}$ be a sequence of finite subsets of $I$ and set $U:=I^\circ$. We assume that $(U_n)_{n \in \N}$ is increasing in the sense that $ U_n \subset U_{n+1}$ for all $n\in \N$, $\partial I\subset U_0$ and that $\bigcup_{n\in \N} U_n$ is dense in $I$. By Theorem \ref{ex1}, there is a $U_n$-equilibrium $(\tau^1_n, \tau^2_n) \in \mathcal{Z}_{U_n}(E_1)\times \mathcal{Z}_{U_n}(E_2)$ for each $n \in \N$, i.e. for all $x \in I$ and all $n\in \N$
\begin{align}
    &J^1(x, \tau^1_n, \tau^2_n) = V^1_{U_n}(x,\tau^2_n),\label{Unequi1}\\
    &J^2(x, \tau^1_n, \tau^2_n) = V^2_{U_n}(x,\tau^1_n)\label{Unequi2}\quad \text{and}\\
    &\forall x \in U_n: f_1(x)\neq g_1(x), \iota_1(\tau^2_n)(\{x\})=\infty \Longrightarrow \iota_1(\tau^1_n)(\{x\})=0 ,\label{as3}\\
    &\forall x \in U_n: f_2(x)\neq g_2(x), \iota_1(\tau^1_n)(\{x\})=\infty \Longrightarrow \iota_1(\tau^2_n)(\{x\})=0\label{as3_2}.
\end{align}
By \eqref{B1} and \eqref{B2}, the stopping times $(\tau^1_n\wedge \eta^{\partial I}, \tau^2_n\wedge\eta^{\partial I}) \in (\mathcal{Z}_{U_n}(E_1)\times \mathcal{Z}_{U_n}(E_2))\cap (\mathcal{Z}^U(E_1) \times \mathcal{Z}^U(E_2))$ do still constitute $U_n$-equilibria for all $n\in \N$. Remember that all stopping times in $\mathcal{Z}^U(E_1)$ must stop outside $U$, while stopping times in $\mathcal{Z}_{U_n}$ can only stop inside $U_n$. Form now on we set $(\tau^1_n, \tau^2_n):=(\tau^1_n\wedge \eta^{\partial I}, \tau^2_n\wedge\eta^{\partial I})$ for all $n\in \N$ w.l.o.g.

By Theorem \ref{comp2} the sequence $((\tau^1_n,\tau^2_n))_{n \in \N}$ possesses  a subsequence that converges in the $\mathcal{M}^1$-product topology to a limit $(\tau^1,\tau^2) \in \mathcal{Z}^U(E_1) \times \mathcal{Z}^U(E_2)$. W.l.o.g., we assume $(\tau^1_n,\tau^2_n)\stackrel{n\to \infty}{\longrightarrow} (\tau^1,\tau^2)$ in the $\mathcal{M}^1$-product topology. 
The rest of this section is dedicated to proving that $(\tau^1,\tau^2)$ is a Markov perfect $U$-equilibrium.

The next lemma investigates the (semi-)continuity of $J^1$, $J^2$ along sequences which converge in the $\mathcal{M}^1$-product topology. The lemma is one of the core results in the proof of Theorem \ref{absbdryexists} and also for Theorem \ref{natbdryexists}.

\begin{lemma}\label{M1cont} Let $U\subset I$ be open with compact closure $\overline{U}\subset I$ (closure taken in $\R$), $x\in U$, $\sigma^1 \in \mathcal{Z}(E_1)$, $\sigma^2\in \mathcal{Z}(E_2)$ and $((\tilde \tau^1_n,\tilde \tau^2_n))_{n\in \N}=((\tau^{\tilde D^1_n, \tilde \lambda^1_n}, \tau^{\tilde D^2_n, \tilde \lambda^2_n}))_{n\in \N}$ a sequence in $\mathcal{Z}^U(E_1)\times \mathcal{Z}^U(E_2)$ that converges to some $(\tilde \tau^1,\tilde \tau^2)=(\tau^{\tilde D^1, \tilde \lambda^1}, \tau^{\tilde D^2, \tilde \lambda^2})\in \mathcal{Z}^U(E_1)\times \mathcal{Z}^U(E_2)$ in the $\mathcal{M}^1$-product topology. For all $n\in \N$ and all $z\in I$ assume that
\begin{align}
    &J^1(z, \tilde \tau^1_n,\tilde \tau^2_n) \ge J^1(z, \tau^{U} ,\tilde \tau^2_n), \quad J^2(y, \tilde \tau^1_n,\tilde \tau^2_n) \ge J^2(y, \tilde \tau^1_n,\tau^{U}), \label{aspt:1}\\
    &\supp(\tilde \lambda^1_n)\cap \tilde D^2_n\subset \{y\in U: g_1(y)\ge J^1(y, \tilde \tau^1_n,\tilde \tau^2_n)\},\label{aspt:2}\\
    &\supp(\tilde \lambda^2_n)\cap \tilde D^1_n\subset \{y\in U: g_2(y)\ge J^2(y, \tilde \tau^1_n,\tilde \tau^2_n)\}, \notag\\
    &\supp(\tilde \lambda^1_n) \cap (\tilde D^2_n)^c\cap\{y\in I:f_1(y)>g_1(y)\} =\varnothing,\label{aspt:3} \\ 
    &\supp(\tilde \lambda^2_n) \cap (\tilde D^1_n)^c\cap\{y\in I:f_2(y)>g_2(y)\}=\varnothing.\notag
\end{align} Then, we have
\begin{enumerate}[(i)]
    \item \label{M1cont1}\begin{align*}
            &\liminf_{n \to \infty} J^1(x,\sigma^1,\tilde \tau^2_n) \ge J^1(x,\sigma^1, \tilde \tau^2),\\
            &\liminf_{n \to \infty} J^2(x,\tilde \tau^1_n,\sigma^2) \ge J^2(x,\tilde \tau^1, \sigma^2)\quad \text{and}
\end{align*}
\item \label{M1cont2}\begin{align*}
    &J^1(x, \tilde \tau^1,\tilde  \tau^2) = \lim_{n\to \infty} J^1(x,\tilde \tau^1_n,\tilde \tau^2_n),\\
    &J^2(x, \tilde \tau^1,\tilde  \tau^2) = \lim_{n\to \infty} J^2(x,\tilde \tau^1_n,\tilde \tau^2_n).
\end{align*}  
\item \label{M1cont3} The requirement \eqref{aspt:3} prevails in the limit, i.e.\
\begin{align*}
        &\supp(\tilde \lambda^1)\cap (\tilde D^2)^c \cap \{y\in I : f_1(y)>g_1(y)\}=\varnothing \quad\text{and} \\
        &\supp(\tilde \lambda^2)\cap (\tilde D^1)^c \cap \{y\in I : f_2(y)>g_2(y)\}=\varnothing.
    \end{align*}
\end{enumerate}
\end{lemma}

\begin{proof} \eqref{M1cont1} W.l.o.g.\ we only show the first estimate. Proposition \ref{weakconv} yields $\PR_x^{(\tilde \tau^2_n,X_{\tilde \tau^2_n})} \stackrel{n\to \infty}{\longrightarrow}\PR_x^{(\tilde \tau^2,X_{\tilde \tau^2})}$ weakly. Slutsky's theorem and Cramér Wold theorem provide $\PR_x^{(\sigma^1,X_{\sigma^1},\tilde \tau^2_n,X_{\tilde \tau^2_n})} \stackrel{n\to \infty}{\longrightarrow}\PR_x^{(\sigma^1,X_{\sigma^1},\tilde \tau^2,X_{\tilde \tau^2})}$ weakly. Since we assumed $g_1\le f_1$, the function $h:[0,\infty]\times I\times [0,\infty]\times I \to \R $, $(y_1,y_2,y_3,y_4) \mapsto \ind{y_1 \le y_3}e^{-r_1 y_1} g_1(y_2)+\ind{y_1 > y_3} e^{-r_1 y_3} f_1(y_4)$ is lower semicontinuous. By construction, $J^1(x,\sigma^1,\tilde \tau^2_n)= \int h d\PR_x^{(\sigma^1,X_{\sigma^1},\tilde \tau^2_n,X_{\tilde \tau^2_n})} $ and $J^1(x,\sigma^1, \tilde \tau^2)=\int h d\PR_x^{(\sigma^1,X_{\sigma^1},\tilde \tau^2,X_{\tilde \tau^2})}$, thus Lemma \ref{genport} \eqref{genport1} yields the claim.    

\eqref{M1cont2} W.l.o.g. we only show the first statement. Once again by Proposition \ref{weakconv}, $\PR_x^{(\tilde \tau^1_n,X_{\tilde \tau^1_n},\tilde \tau^2_n,X_{\tilde \tau^2_n})} \stackrel{n\to \infty}{\longrightarrow}\PR_x^{(\tilde \tau^1,X_{\tilde \tau^1},\tilde \tau^2,X_{\tilde \tau^2})}$ weakly. As above $ J^1(x, \tilde \tau^1, \tilde \tau^2)= \int hd\PR_x^{(\tilde \tau^1,X_{\tilde \tau^1},\tilde \tau^2,X_{\tilde \tau^2})}$ and $J^2(x,\tilde \tau^1_n,\tilde \tau^2_n)=\int hd \PR_x^{(\tilde \tau^1_n,X_{\tilde \tau^1_n},\tilde \tau^2_n,X_{\tilde \tau^2_n})} $. By Lemma \ref{genport} \eqref{genport2}, it is sufficient to show 
\begin{align*}
    \PR_x^{(\tilde \tau^1,X_{\tilde \tau^1},\tilde \tau^2,X_{\tilde \tau^2})}(\{(y_1,y_2,y_3,y_4)&\in [0,\infty]\times I\times [0,\infty]\times I :\\ &h \,\text{ is not continuous in }\,(y_1,y_2,y_3,y_4)\})=0.
\end{align*} By Lemma \ref{qi} \eqref{qi1}, this boils down to proving $L_x^{\tilde \tau^1}\neq L_x^{\tilde \tau^2}$ or $g_1(L_x^{\tilde \tau^1})=f_1(L_x^{\tilde \tau^1})$ and $R_x^{\tilde \tau^1} \neq R_x^{\tilde \tau^2}$ or $g_1(R_x^{\tilde \tau^1})=f_1(R_x^{\tilde \tau^1})$. W.l.o.g.\ we will only show $L_x^{\tilde \tau^1}\neq L_x^{\tilde \tau^2}$ provided $g_1(L_x^{\tilde \tau^1})<f_1(L_x^{\tilde \tau^1})$. We argue by contradiction and assume $L_x^{\tilde \tau^1}= L_x^{\tilde \tau^2}$. Let $ B\subset I$ open such that $L_x^{\tilde \tau^2}\in B$ and $\inf_{y\in B} f_1(z)> \sup_{y\in B}g_1(y)$. Let $\delta_1,\delta_2\in (0,1)$, $\eps\in (0,\infty)$ such that $$\delta_1 \delta_2 e^{-r_1\eps} \inf_{y\in B}f_1(y)>\sup_{y\in B}g_1(y).$$ By \cite[Sections 3.3., 4.2.]{ito1974diffusion}, there is some open neighborhood $B'\subset B$ of $L_x^{\tilde \tau^2}$ such that $\Ex_z[e^{-r_1\eta_{L_x^{\tilde \tau^2}}}\ind{\eta_{L_x^{\tilde \tau^2}}<\tilde \tau^B}]\ge \delta_1$ for all $z\in B'$. Since $$1=\PR_{L_x^{\tilde \tau^2}}((\tilde \tau^2,X_{\tilde \tau^2})\in [0,\eps)\times B)\le \liminf_{n\to \infty}\PR_{L_x^{\tilde \tau^2}}((\tilde \tau^2_n,X_{\tilde \tau^2_n})\in [0,\eps)\times B), $$ there is some $N\in \N$ such that $\PR_{L_x^{\tilde \tau^2}}((\tilde \tau^2_n,X_{\tilde \tau^2_n})\in [0,\eps)\times B)\ge \delta_2$ for all $n\ge N$. This yields
\begin{align*}
    \Ex_{L_x^{\tilde \tau^2}}[e^{-r_1\tilde \tau^2_n} f_1(X_{\tilde \tau^2_n})]
    \ge \Ex_{L_x^{\tilde \tau^2}}[\ind{(\tilde \tau^2_n,X_{\tilde \tau^2_n})\in [0,\eps)\times B}]e^{-r_1\eps}\inf_{y\in B}f_1(y)
    \ge \delta_2 e^{-r_1\eps}\inf_{y\in B}f_1(y)
\end{align*} for all $n\ge N$. Now 
\begin{align*}
    &J^1(z,\tau^{U},\tilde \tau^2_n)
    =\Ex_z[e^{-r_1\tilde \tau_2}f_1(X_{\tilde \tau^2_n})] \\
    =& \Ex_z[\ind{\tilde \tau^2_n<\eta^{L_x^{\tilde \tau^2}}\wedge \tau^{B}}e^{-r_1\tilde \tau_2}f_1(X_{\tilde \tau^2_n})]+\Ex_z[\ind{\tilde \tau^2_n \ge \eta^{L_x^{\tilde \tau^2}}\wedge \tau^{B}}e^{-r_1\tilde \tau_2}f_1(X_{\tilde \tau^2_n})] \\
    \ge& \Ex_z[\ind{\tilde \tau^2_n<\eta^{L_x^{\tilde \tau^2}}} \ind{\eta^{L_x^{\tilde \tau^2}}< \tilde \tau^B} e^{-r_1\eta^{L_x^{\tilde \tau^2}} }] \inf_{y\in B}f_1(y) \\
    &+ \Ex_z[\ind{\tilde \tau^2_n\ge \eta^{L_x^{\tilde \tau^2}}}\ind{\eta^{L_x^{\tilde \tau^2}}< \tilde \tau^B} e^{-r_1\eta^{L_x^{\tilde \tau^2}}} \Ex_z[e^{-r_1\theta_{ \eta^{L_x^{\tilde \tau^2}}}\circ \tilde \tau^2_n} f_1(X_{\theta_{ \eta^{L_x^{\tilde \tau^2}}}\circ \tilde \tau^2_n})  \vert \f_{\eta_{L_x^{\tilde \tau^2}}}]]\\
    =& \Ex_z[\ind{\tilde \tau^2_n<\eta^{L_x^{\tilde \tau^2}}} \ind{\eta^{L_x^{\tilde \tau^2}}< \tilde \tau^B} e^{-r_1\eta^{L_x^{\tilde \tau^2}} }] \inf_{y\in B}f_1(y) \\
    &+ \Ex_z[\ind{\tilde \tau^2_n\ge \eta^{L_x^{\tilde \tau^2}}}\ind{\eta^{L_x^{\tilde \tau^2}}< \tilde \tau^B} e^{-r_1\eta^{L_x^{\tilde \tau^2}}} \Ex_{X_{\eta_{L_x^{\tilde \tau^2}}}}[e^{-r_1\theta_{ \eta^{L_x^{\tilde \tau^2}}}\circ \tilde \tau^2_n} f_1(X_{\theta_{ \eta^{L_x^{\tilde \tau^2}}}\circ \tilde \tau^2_n})  ]]\\
    =& \Ex_z[\ind{\tilde \tau^2_n<\eta^{L_x^{\tilde \tau^2}}} \ind{\eta^{L_x^{\tilde \tau^2}}< \tilde \tau^B} e^{-r_1\eta^{L_x^{\tilde \tau^2}} }] \inf_{y\in B}f_1(y) \\
    &+ \Ex_z[\ind{\tilde \tau^2_n\ge \eta^{L_x^{\tilde \tau^2}}}\ind{\eta^{L_x^{\tilde \tau^2}}< \tilde \tau^B} e^{-r_1\eta^{L_x^{\tilde \tau^2}}} ]\Ex_{L_x^{\tilde \tau^2}}[e^{-r_1 \tilde \tau^2_n} f_1(X_{\tilde \tau^2_n}) ]\\
    \ge&  \Ex_z[\ind{\tilde \tau^2_n<\eta^{L_x^{\tilde \tau^2}}} \ind{\eta^{L_x^{\tilde \tau^2}}< \tilde \tau^B} e^{-r_1\eta^{L_x^{\tilde \tau^2}} }] \inf_{y\in B}f_1(y) +\Ex_z[\ind{\tilde \tau^2_n\ge \eta^{L_x^{\tilde \tau^2}}}\ind{\eta^{L_x^{\tilde \tau^2}}< \tilde \tau^B} e^{-r_1\eta^{L_x^{\tilde \tau^2}}} ]\delta_2 e^{-r_1\eps}\inf_{y\in B}f_1(y)\\
    \ge&\Ex_z[\ind{\eta^{L_x^{\tilde \tau^2}}< \tilde \tau^B} e^{-r_1\eta^{L_x^{\tilde \tau^2}} }] \delta_2 e^{-r_1\eps} \inf_{y\in B}f_1(y) \\
    \ge& \delta_1 \delta_2 e^{-r_1\eps} \inf_{y\in B}f_1(y)>\sup_{y\in B}g_1(y)
\end{align*} for all $n\ge N$ and all $z\in B'$. By Corollary \ref{corollary:LR} we can use \eqref{6fin} to obtain that there is some $n'\ge N$ and $z_0\in B'$ such that either $z_0 \in\supp(\tilde \lambda^1_{n'})\cap  (D^2_{n'})^c$,  $z_0\in \supp(\tilde \lambda^1_{n'})\cap D^2_{n'}$ or $z_0\in \tilde D^1_{n'}$. The first case is ruled out by \eqref{aspt:3}. In both other cases \eqref{aspt:2} yields.
\begin{align*}
     J^1(z_0,\tau^{U},\tau^2_{n'}) >\sup_{y\in B}g_1(y) \ge g_1(z_0) \ge J^1(z_0,\tilde \tau^1_{n'},\tilde \tau^2_{n'}),
\end{align*} which contradicts \eqref{aspt:1} to finish the proof.

\eqref{M1cont3}  W.l.o.g. we only show the first statement. Arguing by contradiction we assume that there is some $x\in \supp(\tilde \lambda^1) \cap (\tilde D^2)\cap \{y\in I: f_1(y)>g_1(y)\}$. This means $x=L^{\tilde \tau^2}_x$ and $L^{\tilde \tau^1}_x<x<R^{\tilde \tau^1}_x$. By Lemma \ref{lemma:ratesconverge} there is a subsequence $((\tilde \tau^1_{n_k},\tilde \tau^2_{n_k}))_{k\in \N}$ such that $\iota^1(\tilde \tau^1_{n_k})\stackrel{k\to \infty}{\longrightarrow} \iota^1(\tilde \tau^1)$ vaguely. Let $B''(\subset L^{\tilde \tau^1}_x, R^{\tilde \tau^1})$ be an open neighborhood of $x$. $x\in \supp(\tilde\lambda^1)$ yields
\begin{align*}
    0<\tilde\lambda^1(B'') \ge \liminf_{k\to \infty} \tilde\lambda^1_{n_k}(B'')
\end{align*} and thus there is $N'_{B''}\in \N$ such that $B''\cap \supp(\tilde\lambda^1_{n_k})\neq\varnothing $ for all $k\ge N'_{B''} $. In particular, if we consider the neighborhood $B'$ constructed in \eqref{M1cont2}, $B'\cap \supp(\tilde\lambda^1_{n_k})\neq\varnothing $ for all $k\ge N'_{B'}$. Let $k\ge N'_{B''}$ with $n_k\ge N'$ and $z_1\in B'\cap \supp(\tilde\lambda^1_{n_k})$. By \eqref{aspt:3} $z_1\in \tilde D^2_{n_k}$. But by choice of $B'$ and \eqref{aspt:2}
\begin{align*}
    J^1(z_1,\tau^{U},\tau^2_{n_k}) >\sup_{y\in B}g_1(y) \ge g_1(z_1) \ge J^1(z_1,\tilde \tau^1_{n_k},\tilde \tau^2_{n_k})
\end{align*} which contradicts \eqref{aspt:1}.
\end{proof}

\begin{theorem}\label{5.2}
    Assume \eqref{A}, \eqref{B1}, \eqref{B2}. The limit $(\tau^1,\tau^2)\in \mathcal{Z}^U(E_1) \times \mathcal{Z}^U(E_2)$ of a subsequence of $((\tau^1_n,\tau^2_n))_{n \in \N}\in (\mathcal{Z}^U(E_1) \times \mathcal{Z}^U(E_2))^{\N}$ constructed in the beginning of this section with $U:=I^\circ$ is a Markov perfect equilibrium. Additionally, if we write $(\tau^{D^1,\lambda^1},\tau^{D^2,\lambda^2})=(\tau^1,\tau^2)$ then 
    \begin{align*}
        &\supp(\lambda^1)\cap (D^2)^c \cap \{y\in I : f_1(y)>g_1(y)\}=\varnothing \quad\text{and} \\
        &\supp(\lambda^2)\cap ( D^1)^c \cap \{y\in I : f_2(y)>g_2(y)\}=\varnothing.
    \end{align*}
\end{theorem}

\begin{proof} Of \eqref{cond1} and \eqref{cond2} we only show \eqref{cond1} w.l.o.g. We start by checking the preconditions of Lemma \ref{M1cont} for the sequence $((\tau^1_n,\tau^2_n))_{n \in \N}\in (\mathcal{Z}^U(E_1) \times \mathcal{Z}^U(E_2))^{\N}$. Due to \eqref{Unequi1}, \eqref{Unequi2} the sequence $((\tau^1_n,\tau^2_n))_{n \in \N}\in (\mathcal{Z}^U(E_1) \times \mathcal{Z}^U(E_2))^{\N}$ satisfies \eqref{aspt:1}. Together with Lemma \ref{indiff} \eqref{weakindiff} we infer \eqref{aspt:2} from \eqref{Unequi1}, \eqref{Unequi2} as well. \eqref{as3} and \eqref{as3_2} yield \eqref{aspt:3}. 

Due to Lemma \ref{indiff} \eqref{indiff15} it suffices to show \eqref{cond1} for all $\sigma^1\in \bigcup_{k\in \N} \mathcal{Z}_{U_k}(E_1)$ instead of all $\sigma^1\in\ran(E_1)$. Now let $x\in I$ and $\sigma^1\in \mathcal{Z}_{U_m}$ for some $m\in \N$. By Lemma \ref{M1cont} and \eqref{Uequi1}
    \begin{align*}
        J^1(x,\tau^1,\tau^2) 
        &=\lim_{n\to \infty}J^1(x,\tau^1_n,\tau^2_n) 
        = \lim_{n\to \infty} V^1_{U_n}(x,\tau^2_n) 
        \ge \limsup_{n\to \infty} V^1_{U_m}(x,\tau^2_n) \\
        &\ge \liminf_{n\to \infty} J^1(x,\sigma^1_m,\tau^2_n)
        \ge J^1(x,\sigma^1_m,\tau^2).
    \end{align*} By \eqref{B1} and \eqref{B2} $J^1(x,\tau^1,\tau^2) =g_1(x)=J^1(x,\sigma^1_m,\tau^2)$ for $x\in \partial I$. This proves that $(\tau^1,\tau^2)$ is the desired equilibrium. The other claim follows from Lemma \ref{M1cont} \eqref{M1cont3}.
\end{proof}

\begin{proof} (of Theorem \ref{absbdryexists})\\
    Follows immediately from Theorem \ref{5.2}.
    
\end{proof}

\section{Existence of equilibria: Natural boundary}\label{sec:naturalboundary}

In this section we show that with the existence of equilibria in games driven by diffusions absorbed at the boundary, we can even construct equilibria in games based on diffusions with natural boundary. More specifically, in this section we generally assume that \eqref{A} and \eqref{C1} hold. The main theorem will also include the additional technical conditions \eqref{C2} and \eqref{C3}.

We start by constructing a candidate for the equilibrium. Let $I_n:=[a_n,b_n]$, $a_n<b_n\in I$, $n\in \N$, be an exhaustion of $I$ in the sense that $I_n\subset I_{n+1}$ for all $n\in \N$ and $\bigcup_{n\in \N} I_n=I$. Next we define games $I_n$ for each $n\in\N$ that satisfy the assumptions \eqref{A}, \eqref{B1} and \eqref{B2} such that Theorem \ref{absbdryexists} is applicable. For that set $X^n:= X^{\tau^{I_n^\circ}}$, where $X^{\tau^{I_n^\circ}}= (X^{\tau^{I_n^\circ}}_t)_{t\in [0,\infty)}:=(X_{t\wedge \tau^{I_n^\circ}})_{t\in [0,\infty)}$ denotes the process absorbed outside $I_n^\circ$. We choose a sequence of truncation functions $(\varphi_n)_{n\in \N}\in \ccc^\infty(I)^{\N}$ with the properties
\begin{align*}
    0\le \varphi_n\le 1, \quad \varphi(y)=0 \,\,\text{ for all }y\in I\setminus I_n \,\, \text{ and} \,\, \varphi(y)=1 \,\,\text{ for all }y\in I_{n-1}.
\end{align*} Now let $n\in \N$. We define
\begin{align*}
    f_{i,n}&:=g_i + \varphi_n (f_i-g_i), \quad i=1,2
\end{align*} and denote the set of randomized stopping times with respect to $X^n$ drawing external randomness from a random variable $E$ by $\ran_n(E)$. For $\tau^1\in \ran_n(E_1)$, $\tau^2\in \ran_n(E_2)$ and $x\in I_n$ set

\begin{align*}
    J^1_n(x,\tau^1,\tau^2)&:= \Ex_x[g_1(X^n_{\tau^1})e^{-r_1\tau^1}\ind{\tau^1\le \tau^2}+f_{1,n}(X^n_{\tau^2})e^{-r_1\tau^2}\ind{\tau^1>\tau^2}],\\
    J^2_n(x,\tau^1,\tau^2)&:= \Ex_x[g_2(X^n_{\tau^2})e^{-r_2\tau^2}\ind{\tau^2\le \tau^1}+f_{2,n}(X^n_{\tau^1})e^{-r_2\tau^1}\ind{\tau^2>\tau^1}] \quad \text{and}\\
    V^1_n(x,\tau^2)&:=\sup_{\tilde \tau^1\in \ran_n(E_1)}J^1_n(x,\tilde \tau^1,\tau^2),\\
    V^2_n(x,\tau^1)&:=\sup_{\tilde \tau^2\in \ran_n(E_2)}J^1_n(x,\tau^1,\tilde \tau^2).
\end{align*} 
The Dynkin game associated with the driving process $X^n$ on $I_n$ and the payoffs $J^1_n,J^2_n$ satisfies \eqref{A}, \eqref{B1} and \eqref{B2}. Thus by Theorem \ref{5.2} this game possesses an equilibrium $(\tau^1_n,\tau^2_n)=( \tau^{D^1_n, \lambda^1_n},\tau^{D^2_n,\lambda^2_n})\in \mathcal{Z}^{I_n^\circ}(E_1)\times \mathcal{Z}^{I_n^\circ}(E_2)$. To distinguish the equilibria for different $n$ from each other and from the desired equilibrium of the original game, we will refer to $(\tau^1_n,\tau^2_n)$ as $(J^1_n,J^2_n)$-equilibrium. By Theorem \ref{5.2} we may also assume 
\begin{align}
    &\supp(\lambda^1_n)\cap (D^2_n)^c \cap \{y\in I : f_{1,n}(y)>g_1(y)\}=\varnothing \quad\text{and}\label{52prop} \\
    &\supp(\lambda^2_n)\cap ( D^1_n)^c \cap \{y\in I : f_{2,n}(y)>g_2(y)\}=\varnothing.\notag
\end{align}
Technically $\tau^i_n$, $i=1,2$, is a Markovian randomized stopping time with respect to $X^n$ in the sense that the corresponding functional $A^{D^i_n,\lambda^i_n}$, see \eqref{func}, is generated by the local time of $X^n$ and $\tau^{D^i_n}=\inf\{t\ge0 : X^n_t \not \in D^i_n\}$, i.e.\ a first entry time of $X^n$. However, we can also regard this as a Markovian randomized stopping time with respect to $X$. Indeed, the functional $A^{D^i_n,\lambda^i_n}$ given by \eqref{func} but based on the local time of $X$ and $\tau^{D^i_n}=\inf\{t\ge0 : X_t \not \in D^i_n\}$, i.e. a first entry time of $X$, has the same distribution under $\PR_x$, $x\in I_n$ due to $D^i_n\subset I_n^\circ$. In view of the fact that we identify $\tau^i_n$ with the Markovian randomized stopping time that is generated by the functional based on the local time of $X$ and $\tau^{D^i_n}=\inf\{t\ge0 : X_t \not \in D^i_n\}$.

The next step is to extract a suitable limit point of the sequence $((\tau^1_n,\tau^2_n))_{n\in \N}$ as the equilibrium candidate for the original game. The problem is that the $(J^1_n,J^2_n)$-equilibria $(\tau^1_n,\tau^2_n)$, $n\in \N$, are not necessarily unique and could possibly have very different continuation regions $D^i_n$ and stopping rates $\lambda^i_n$, so $\lim_{n\to \infty} \tau^i_n$ may not be well defined in any reasonable sense. To find our limit point anyway, we construct a convergent subsequence of the $(J^1_n,J^2_n)$-equilibria next. 

For $n,m\in \N$ let
\begin{align}
    \tau^i_{n,m}:= \tau^{D^i_n\cap I^\circ_m, \lambda^i_n\vert_{I^\circ_m}}\in \mathcal{Z}^{I^\circ_m}, \quad i=1,2.\label{taunm}
\end{align} Given any $m\in \N$, by Theorem \ref{comp2} every subsequence $((\tau^1_{n^{m-1}_k,m},\tau^2_{n^{m-1}_k,m}))_{k\in \N}$ of $((\tau^1_{n,m},\tau^2_{n,m}))_{n\in \N}$ possess a subsequence $((\tau^1_{n^{m}_k,m},\tau^2_{n^{m}_k,m}))_{k\in \N}$ that converges in the $\mathcal{M}^1$-product topology, i.e.\ there is $(\tau^1_{*,m},\tau^2_{*,m})=(\tau^{D^1_{*,m}, \lambda^1_{*,m}}, \tau^{D^2_{*,m}, \lambda^2_{*,m}})\in \mathcal{Z}^{I^\circ_m}(E_1)\times\mathcal{Z}^{I^\circ_m}(E_2)$ such that 
\begin{align*}
    (\tau^1_{n^{m}_k,m},\tau^2_{n^{m}_k,m}) \stackrel{k\to \infty}{\longrightarrow} (\tau^1_{*,m},\tau^2_{*,m})
\end{align*} in the $\mathcal{M}^1$-product topology. By Lemma \ref{lemma:ratesconverge}, by possibly passing over to a further subsequence, we may assume that for each $i\in\{1,2\}$
\begin{align*}
    \iota_1(\tau^i_{n^{m}_k,m}) \stackrel{k\to \infty}{\longrightarrow} \iota_1(\tau^i_{*,m})=\lambda^i_{*,m}
\end{align*} 
vaguely on every connected component of $D^i_{*,m}$. Now the diagonal sequence $((\tau^1_{n^{k}_k},\tau^2_{n^{k}_k}))_{k\in \N}$ has the properties 
\begin{align*}
    (\tau^1_{n^{k}_k,m},\tau^2_{n^{k}_k,m}) \stackrel{k\to \infty}{\longrightarrow} (\tau^1_{*,m},\tau^2_{*,m})
\end{align*} in the $\mathcal{M}^1$-product topology and \begin{align*}
    \iota_1(\tau^i_{n^{m}_k,m}) \stackrel{k\to \infty}{\longrightarrow} \iota_1(\tau^i_{*,m})=\lambda^i_{*,m}
\end{align*} vaguely on every connected component of $D^i_{*,m}$ for all $m\in \N$. It remains to be shown that $\lim_{k\to \infty} (\tau^1_{n^{k}_k},\tau^2_{n^{k}_k})$ exists in a reasonable sense. For that we start by establishing a consistency condition.

\begin{lemma}\label{lemma:6.1}
    The sequence $((\tau^1_{*,m},\tau^2_{*,m}))_{m\in \N}$ constructed above under the assumptions \eqref{A} and \eqref{C1} satisfies the following consistency conditions:
    \begin{align*}
         D^i_{*,m} &= D^i_{*,m'}\cap I^\circ_m \quad \text{and}\\
         \lambda^i_{*,m} &= \lambda^i_{*,m'}\vert_{I^\circ_m}
    \end{align*} for both $i \in \{1,2\}$ and all $m'\ge m\in \N$
\end{lemma}

\begin{proof}
    For every $n\in \N$, we denote the connected component of $D^i_{*,n}$ that contains $x\in I$ by $D^i_{*,n,x}$ with $D^i_{*,n,x}:=\varnothing$ for $x\not\in D^i_{*,n}$. Then $D^i_{*,n}=\bigcup_{x \in I^\circ_n\cap \mathbb{Q}} D^i_{*,n,x}$. Now fix some $x\in I^\circ_m\cap \mathbb{Q}$. By Corollary \ref{corollary:LR}, $D^i_{*,n,x}=(L^i_{*,n,x},R^i_{*,n,x})$, where the indices $*,n$ express that $L^i_{*,n,x}$, $R^i_{*,n,x}$ denote $L^i_x$ and $R^i_x$ from \eqref{LRgl} corresponding to $D^i_{*,n,x}$. By \eqref{6fin}, and due to the definition of $\tau^i_{n_1,n_2}$, $n_1,n_2\in \N$
    \begin{align*}
        &L^i_{*,m,x}\\
        =& \sup\left\{ y \le x : \liminf_{k \to \infty} \infty \ind{D^i_{n_k^m}\cap I^\circ_m \cap (y-\eps,y+\eps) \neq \varnothing} + \lambda^i_{n_k^m}\vert_{I^\circ_m} ((y-\eps,y+\eps)) = \infty \,\, \forall \eps >0\right\}\\
        =&\sup\left\{ y \le x : \liminf_{k \to \infty} \infty \ind{D^i_{n_k^{m'}}\cap I^\circ_m \cap (y-\eps,y+\eps) \neq \varnothing} + \lambda^i_{n_k^{m'}}\vert_{I^\circ_m} ((y-\eps,y+\eps)) = \infty \,\, \forall \eps >0\right\}\\
        =& \inf I^\circ_m \vee \sup\left\{ y \le x : \liminf_{k \to \infty} \infty \ind{D^i_{n_k^{m'}}\cap I^\circ_{m'} \cap (y-\eps,y+\eps) \neq \varnothing} + \lambda^i_{n_k^{m'}}\vert_{I^\circ_{m'}} ((y-\eps,y+\eps)) = \infty \,\, \forall \eps >0\right\}\\
        =& \inf I^\circ_m \vee L^i_{*,m',x}
    \end{align*} Note that in the second step we used that by Lemma \ref{lemma:eineUmg}, $L^i_{*,m,x}$ is uniquely determined by the limit $(\tau^1_{*,m},\tau^2_{*,m})$ of $((\tau^1_{n^m_k,m},\tau^2_{n^m_k,m}))_{k\in \N}$, so the right hand side of \eqref{6fin} must remain the same when we pass over to the subsequence $((\tau^1_{n^{m'}_k,m},\tau^2_{n^{m'}_k,m}))_{k\in \N}$, which must have the same limit. Analogously we infer $R^i_{*,m,x}=\sup I^\circ_m \wedge R^i_{*,m',x}$, so $D^i_{*,m,x}= D^i_{*,m',x} \cap I^\circ_m$. For $x_1\neq x_2\in I^\circ_m$, as $D^i_{*,m',x_1}$, $D^i_{*,m',x_2}$ are connected components and $I^\circ_m$ is open and connected, the sets $D^i_{*,m',x_1} \cap I^\circ_m$ and $D^i_{*,m',x_2}\cap I^\circ_m$ must be either equal or disjoint. Putting pieces together, we get
    \begin{align*}
        D^i_{*,m}
        = \bigcup_{x \in I^\circ_m\cap \mathbb{Q}} D^i_{*,m,x} 
        = \bigcup_{x \in I^\circ_m\cap \mathbb{Q}} D^i_{*,m',x} \cap I^\circ_m
        = \bigcup_{x \in I^\circ_{m'}\cap \mathbb{Q}} D^i_{*,m',x} \cap I^\circ_m
        = I^\circ_m\cap \bigcup_{x \in I^\circ_{m'}\cap \mathbb{Q}} D^i_{*,m',x}
        = I^\circ_m\cap D^i_{*,m'}.
    \end{align*} Invoking the definition of $\tau^i_{n_1,n_2}$, $n_1,n_2\in \N$, the second claim follows from
    \begin{align*}
        \lambda^i_{*,m}= \lim_{k\to \infty} \lambda^i_{n^m_k}\vert_{I^\circ_m} = \lim_{k\to \infty} \lambda^i_{n^{m'}_k} \vert_{I^\circ_m} = \lambda^i_{*,m'} \vert_{I^\circ_m}.
    \end{align*}
\end{proof}

With Lemma \ref{lemma:6.1} at hand, for each $i\in \{1,2\}$ we set
\begin{align}
    D^i_*:=\bigcup_{m\in \N} D^i_{*,m}\label{D*}
\end{align} and define $\lambda^i_*$ to be the unique measure on $D^i_*$ such that 
\begin{align}
    \lambda^i_*\vert_{I^\circ_m}=  \lambda^i_{*,m}\label{l*}
\end{align} for all $m\in \N$. The rest of this section is dedicated to show that $(\tau^1_*,\tau^2_*):=(\tau^{D^1_*,\lambda^1_*},\tau^{D^2_*,\lambda^2_*})$ is a Markov-perfect Nash equilibrium in the original game with natural boundaries. We start by gathering some technical results.

\begin{lemma}\label{6.2}
    Assume \eqref{A}, \eqref{C1}, \eqref{C2}, \eqref{C3}. For both $i\in \{1,2\}$, all $x\in I$, and the sets $I_m$, $m\in\N$, introduced in the beginning of Section \ref{sec:naturalboundary} it holds that
    \begin{align*}
        \sup_{(\tau^1,\tau^2)\in \ran(E_1) \times \ran(E_2), k \ge m} \vert J^i(x,\tau^1,\tau^2) -J^i_k(x,\tau^1\wedge \tau^{I^\circ_m}, \tau^2\wedge \tau^{I^\circ_m})\vert \stackrel{m\to \infty}{\longrightarrow}0.
    \end{align*} 
\end{lemma}

\begin{proof}
    Fix $i\in \{1,2\}$ and $x\in I$. By \eqref{C1}, $I$ is open, so $\tau^{I^\circ_m}\to \infty$ for $m\to \infty$ $\PR_x$-a.s.\ due to the continuity of the paths of $X$. By \eqref{A} and \eqref{C3}

    \begin{align*}
        \sup_{s\in [t,\infty)}e^{-r_is}\vert g_i(X_s) \vert \stackrel{t\to \infty}{\longrightarrow}0 \quad\text{and}\quad \sup_{s\in [t,\infty)}e^{-r_is}\vert f_i(X_s)\vert \stackrel{t\to \infty}{\longrightarrow}0\quad \PR_x\text{-a.s.}
    \end{align*} Using that $f_{i_m}=f_i$ on $I_{m-1}$ first, with dominated convergence due to \eqref{C2} the above yields
    \begin{align*}
        &\sup_{(\tau^1,\tau^2)\in \ran(E_1) \times \ran(E_2), k \ge m} \vert J^i(x,\tau^1,\tau^2) -J^i_k(x,\tau^1\wedge \tau^{I^\circ_m}, \tau^2\wedge \tau^{I^\circ_m})\vert\\
        =&\sup_{(\tau^1,\tau^2)\in (\mathcal{T}\cup\mathcal{Z}(E_1)) \times (\mathcal{T}\cup\mathcal{Z}(E_2)), k \ge m} \vert J^i(x,\tau^1,\tau^2) -J^i_k(x,\tau^1\wedge \tau^{I^\circ_m}, \tau^2\wedge \tau^{I^\circ_m})\vert \\
        =& \sup_{(\tau^1,\tau^2)\in (\mathcal{T}\cup\mathcal{Z}(E_1)) \times (\mathcal{T}\cup\mathcal{Z}(E_2))} \vert \Ex_x[\ind{\tau^1\wedge \tau^2 >\tau^{I^\circ_{m-1}}} (e^{-r_i\tau^1}g_i(X_{\tau^1})\ind{\tau^1\le \tau^2} + e^{-r_i\tau^2}f_i(X_{\tau^2})\ind{\tau^1> \tau^2}\\
        &-  e^{-r_i\tau^{I^\circ_{m-1}}} g_i(X_{\tau^{I^\circ_{m-1}}}))]\vert\\
        \le& \Ex_x\left[ \sup_{s\in [\tau^{I^\circ_{m-1}},\infty)}e^{-r_is}\vert f_i(X_s)\vert  \right] \stackrel{m\to \infty}{\longrightarrow}0.
    \end{align*}
\end{proof}

\begin{corollary}\label{onlyz} Assume \eqref{A}, \eqref{C1}, \eqref{C2}, \eqref{C3} and fix some $\tau^2\in \mathcal{Z}(E_2)$. Now
\begin{align*}
    \sup_{\tau^1\in \ran(E_1)}J^1(x,\tau^1,\tau^2) &=\sup_{m\in \N}\sup_{\tau^1\in \ran(E_1)}J^i_m(x,\tau^1\wedge \tau^{I^\circ_m}, \tau^2\wedge \tau^{I^\circ_m})=\sup_{m\in \N}\sup_{\tau^1\in \mathcal{Z}(E_1)}J^i_m(x,\tau^1\wedge \tau^{I^\circ_m}, \tau^2\wedge \tau^{I^\circ_m})\\
    &= \sup_{\tau^1\in \mathcal{Z}(E_1)}J^1(x,\tau^1,\tau^2)
\end{align*}
    
\end{corollary}

\begin{proof}
    The first and third equality follow from Lemma \ref{6.2} and the second from Lemma \ref{indiff} \eqref{indiff2} since the Dynkin game associated with the driving process $X^n$ on $I_n$ and the payoffs $J^1_n,J^2_n$ satisfies \eqref{A}, \eqref{B1} and \eqref{B2}.
\end{proof}

\begin{lemma}\label{6.3}
Let $m\in \N$. For all $n\ge m$, all $z\in I_m$ and $(\tau^1_{n,m},\tau^2_{n,m})$ defined in \eqref{taunm}, under the assumptions \eqref{A} and \eqref{C1} we have
\begin{align}
    &J^1_m(z, \tau^1_{n,m}, \tau^2_{n,m}) \ge J^1_m(z, \tau^{I_m^\circ} , \tau^2_{n,m}), \quad J^2_n(y,  \tau^1_{n,m},\tau^2_{n,m}) \ge J^2_n(y,  \tau^1_{n,m},\tau^{I_m^\circ}), \label{aspt:11}\\
    &\supp(\lambda^1_{n,m})\cap \tilde D^2_{n,m}\subset \{y\in U: g_1(y)\ge J^1_m(y, \tilde \tau^1_{n,m},\tilde \tau^2_{n,m})\},\label{aspt:21}\\
    &\supp( \lambda^2_{n,m})\cap D^1_{n,m}\subset \{y\in U: g_2(y)\ge J^2_m(y, \tau^1_{n,m},\tau^2_n)\}, \notag\\
    &\supp( \lambda^1_{n,m}) \cap ( D^2_{n,m})^c\cap\{y\in I:f_{1,m}(y)>g_1(y)\} =\varnothing,\label{aspt:31} \\ 
    &\supp(\lambda^2_{n,m}) \cap (D^1_{n,m})^c\cap\{y\in I:f_{2,m}(y)>g_2(y)\}=\varnothing.\notag
\end{align}
\end{lemma}

\begin{proof}
    Let $n\ge m \in\N$, $z\in I_m$ and $i=1$ w.l.o.g. By definition $V^1_n(z,\tau^2_n)\ge V^1_m(\tau^2_{n,m})$. \eqref{52prop} and $\PR_z(\tau^1_{n,n}<\infty)=1$ allow to apply Lemma \ref{indiff} \eqref{weakindiff} which yields $\PR_z(X_{\tau^1_{n,n}}\in \{y\in I_n: V^1_n(y,\tau^2_n) = g_1(y)\})=1$. Putting things together we find $\{y\in I_n: V^1_n(y,\tau^2_n) = g_1(y)\}\cap I_m \subset \{y\in I_m: V^1_m(y,\tau^2_{n,m}) = g_1(y)\}$ and so $\PR_z(X_{\tau^1_{n,m}}\in \{y\in I_m: V^1_m(y,\tau^2_{n,m}) = g_1(y)\})=1$. We infer \eqref{aspt:11} via
    \begin{align*}
        &J^1_m(z,\tau^1_{n,m},\tau^2_{n,m})- J^1_m(z,\tau^{I_m^\circ},\tau^2_{n,m})\\
        =&\Ex_z[\ind{\tau^1_{n,m}\le \tau^2_{n,m}} \ind{ \tau^1_{n,m} < \tau^{I_m^\circ}}( g_1(X_{\tau^1_{n,m}})e^{-r_1 \tau^1_{n,m}} -g_1(X_{\tau^{I_m^\circ}})e^{-r_1 \tau^{I_m^\circ}}\ind{ \tau^{I_m^\circ}\le \tau^2_{n,m}}-f_{1,m}(X_{\tau^2_{n,m}})e^{-r_1 \tau^2_{n,m}}\ind{ \tau^2_{n,m} < \tau^{I_m^\circ}}) ]\\
        =&\Ex_z[\ind{\tau^1_{n,m}\le \tau^2_{n,m}} \ind{ \tau^1_{n,m} < \tau^{I_m^\circ}}e^{-r_1 \tau^1_{n,m}} (V^1_m(X_{\tau^1_{n,m}},\tau^2_{n,m})- J^1_m(X_{\tau^1_{n,m}},\tau^{I_m^\circ},\tau^2_{n,m}))] \ge 0
    \end{align*} where we use the Markov property in the second step. By Lemma \ref{indiff} \eqref{weakindiff} $\supp(\lambda^1_n)\cap D^2_n\subset \{y\in I_n: V^1_n(y,\tau^2_n) = g_1(y)\}$. We obtain \eqref{aspt:21} through
    \begin{align*}
        &\supp(\lambda^1_{n,m})\cap D^2_{n,m} 
        \subset \supp(\lambda^1_n)\cap D^2_n 
        \subset \{y\in I_n: V^1_n(y,\tau^2_n) = g_1(y)\}\\
        \subset&\{y\in I_m: V^1_m(y,\tau^2_{n,m}) = g_1(y)\} 
        \subset\{y\in I_m: J^1_m(y,\tau^1_{n,m},\tau^2_{n,m})\le g_1(y)\}.
    \end{align*} The definitions of $f_{1,m}$, $\lambda^1_{n,m}$ and $(D^2_{n,m})^c$ combined with \eqref{52prop} provide
    \begin{align*}
        \supp(\lambda^1_{n,m})\cap (D^2_{n,m})^c \cap \{y\in I : f_{1,m}(y)>g_1(y)\}\subset \supp(\lambda^1_n)\cap (D^2_n)^c \cap \{y\in I : f_{1,n}(y)>g_1(y)\}=\varnothing,
    \end{align*} i.e.\ \eqref{aspt:31}.
\end{proof}

\begin{theorem} \label{6.4}Assume \eqref{A}, \eqref{C1}, \eqref{C2} and \eqref{C3}. $(\tau^1_*,\tau^2_*):=(\tau^{D^1_*,\lambda^1_*},\tau^{D^2_*,\lambda^2_*})\in \mathcal{Z}(E_1)\times \mathcal{Z}(E_2)$ with $D^i_*,\lambda^i_*$, $i=1,2$ given by \eqref{D*} and \eqref{l*} is a Markov perfect Nash equilibrium. 
\end{theorem}

\begin{proof}
    W.l.o.g.\ we only show \eqref{cond1}. Let $\tau^1\in\mathcal{Z}(E_1)$ and $\eps>0$. By Lemma \ref{6.2}, there is $N\in\N$ such that $\sup_{(\tau^1,\tau^2)\in \ran(E_1) \times \ran(E_2)} \vert J^i(x,\tau^1,\tau^2) -J^i_m(x,\tau^1\wedge \tau^{I^\circ_m}, \tau^2\wedge \tau^{I^\circ_m})\vert <\eps$ for all $m\ge N$. By Lemma \ref{6.3} we can apply Lemma \ref{M1cont} \eqref{M1cont1} and \eqref{M1cont2} to find $n\ge N$ with $J^1_N(x,\tau^1\wedge \tau^{I^\circ_N},\tau^2_{*,N}) - J^1_N(x,\tau^1\wedge \tau^{I^\circ_N},\tau^2_{n,N}) <\eps$ and $\vert J^1_N(x,\tau^1_{*,N},\tau^2_{*,N})- J^1_N(x,\tau^1_{n,N},\tau^2_{n,N}) \vert <\eps$. Due to the definitions of the involved stopping times, all of this yields
    \begin{align*}
        &J^1(x,\tau^1_*,\tau^2_*)-J^1_n(x,\tau^1_{n,n},\tau^2_{n,n}) \\
        =& J^1(x,\tau^1_*,\tau^2_*) - J^1_N(x,\tau^1_{*,N},\tau^2_{*,N}) + J^1_N(x,\tau^1_{*,N},\tau^2_{*,N})- J^1_N(x,\tau^1_{n,N},\tau^2_{n,N}) \\
        &+ J^1_N(x,\tau^1_{n,N},\tau^2_{n,N})- J^1(x,\tau^1_{n},\tau^2_{n})+ J^1(x,\tau^1_{n},\tau^2_{n})-J^1_n(x,\tau^1_{n,n},\tau^2_{n,n}) \\
        >&- 4\eps
    \end{align*} and
    \begin{align*}
        &J^1(x,\tau^1,\tau^2_*)-J^1_n(x,\tau^1\wedge \tau^{I^\circ_n},\tau^2_{n,n}) \\
        =& J^1(x,\tau^1,\tau^2_*) - J^1_N(x,\tau^1\wedge \tau^{I^\circ_N},\tau^2_{*,N}) + J^1_N(x,\tau^1\wedge \tau^{I^\circ_N},\tau^2_{*,N})- J^1_N(x,\tau^1\wedge \tau^{I^\circ_N},\tau^2_{n,N}) \\
        &+ J^1_N(x,\tau^1\wedge \tau^{I^\circ_N},\tau^2_{n,N})- J^1(x,\tau^1,\tau^2_{n})+ J^1(x,\tau^1,\tau^2_{n}) -J^1_n(x,\tau^1\wedge \tau^{I^\circ_n},\tau^2_{n,n}) \\
        <& 4\eps.
    \end{align*}
    Since $(\tau^1_{n,n},\tau^2_{n,n})$ is a $(J^1_n,J^1_n)$-equilibrium this implies
    \begin{align*}
        J^1(x,\tau^1,\tau^2_*)+ 4\eps >J^1_n(x,\tau^1\wedge \tau^{I^\circ_n},\tau^2_{n,n})
        \ge J^1(x,\tau^1,\tau^2_{n})+ J^1(x,\tau^1,\tau^2_{n}) >J^1(x,\tau^1,\tau^2_*) -4\eps.
    \end{align*} Sending $\eps\searrow 0$ we find that $J^1(x,\tau^1_*,\tau^2_*)\ge J^1(x,\tau^1,\tau^2_*) $. By Corollary \ref{onlyz} this implies \eqref{cond1}.
\end{proof}

\begin{proof}(of Theorem \ref{natbdryexists})\\
The claim follows directly from Theorem \ref{6.4}.
\end{proof}

\section{Discussion}\label{sec:discussion}
In this paper we have presented a technique for proving the existence of Markov-perfect equilibria in war-of-attrition type Dynkin games. 

For this purpose, we have equipped the class of Markovian randomized stopping times with two topologies, the $\mathcal M$ and the $\mathcal M^1$-product topology. Using the former and Kakutani's fixed point theorem, we first found equilibria in discretized games. By forming limits with respect to the $\mathcal M^1$-product topology, we then obtained equilibria for the original Dynkin game. 

We have chosen the detour via the discretization of the game in order to avoid a technical
problem in the direct application of the Kakutani fixed point theorem to the non-restricted
best-response mapping with the $\mathcal M^1$-product topology. In this case, convexity turns out to
be a difficulty: convex combinations of the distributions of Markovian randomized stopping
times are of course generally not of this form. Thus, other variants of the Kakutani fixed point
theorem would be necessary. However, we did not succeed in finding a suitable one for this
situation.

We have chosen the war-of-attrition type Dynkin game class to implement the approach because it is one of the essential classes of stopping games with natural randomization and has been studied in many contexts. However, by studying our proof carefully, it becomes clear that the approach presented here is by no means restricted to our framework. Many elements can certainly also be used for proofs in other stopping games with underlying linear diffusions. We will discuss this briefly below: 

The compactness discussion in Section \ref{sectopo} hardly uses any elements of the specific problem and can form the basis for existence proofs in other problem classes. 
The existence of equilibria in the discretized game in Section \ref{secdisc} then uses the problem structure more directly. Of course, it must first be ensured that the best response problem on a Markovian stopping time is of Markovian structure. In addition, the $\mathcal{M}$-continuity of $J$ (Lemma \ref{MC}) and the closedness of the graph of the best response mapping (Lemma \ref{lemma_closed}) must be given, which here is particularly related to the question of whether players want to stop simultaneously.  While the main line of argument in Section \ref{sec:final_proof} can be adapted for other problems, the details of the estimates again depend on the continuity structure of the problem. The mentioned continuity properties are not only of a technical nature: As mentioned above, there are Dynkin games without equilibria. However, starting from equilibria in some classes of games, it is often possible to directly construct $\epsilon$ equilibria in other classes. We are currently working on this in another project for zero-sum Dynkin games.

All in all, one can be optimistic that the approach developed in this paper can be used as a blueprint to show the existence of Markov-perfect equilibria in other classes of Dynkin games and beyond. 

\appendix
\section{Appendix}


\begin{lemma}\label{genport}(Generalized Portmanteau theorem)\\
Let $\mathcal{X}$ be a locally compact, second countable metric space, $(\mu_n)_{n \in \N}\in \mathcal{M}^1(\mathcal{X})^{\N}$, $\mu \in \mathcal{M}^1(\mathcal{X})$ such that $\mu_n \stackrel{n \to \infty}{\longrightarrow} \mu$ weakly.
\begin{enumerate}[(i)]
    \item \label{genport1}If $f: \mathcal{X} \to \R$ is bounded and $\mu(\{x \in \mathcal{X}: f$ is not lower semicontinuous at $x\})=0$ then
    \begin{align*}
        \liminf_{n \to \infty} \int f d\mu_n \ge \int f d\mu.
    \end{align*}
    \item \label{genport2}If $f: \mathcal{X}\to \R$ is bounded and $\mu(\{x \in \mathcal{X}: f$ is not continuous at $x\})=0$ then
    \begin{align*}
        \lim_{n \to \infty} \int f d \mu_n = \int fd\mu.
    \end{align*}
\end{enumerate}
\end{lemma}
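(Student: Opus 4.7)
My plan is to reduce part (i) to the classical Portmanteau theorem for bounded lower semicontinuous functions via a passage to the lower semicontinuous envelope of $f$, and then to derive (ii) from (i) by applying it to both $f$ and $-f$.

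First I would define the lower semicontinuous envelope
\[
    \tilde{f}(x) := \lim_{r \searrow 0} \inf_{y \in B_r(x)} f(y), \quad x \in \mathcal{X},
\]
and record the following standard properties: $\tilde{f}$ inherits the bounds of $f$, is lower semicontinuous on all of $\mathcal{X}$, satisfies $\tilde{f} \le f$ pointwise, and $\tilde{f}(x) = f(x)$ if and only if $f$ is lower semicontinuous at $x$. Both directions of this last equivalence reduce to a short $\varepsilon$-argument: if $f$ is lsc at $x$, then $\inf_{y \in B_r(x)} f(y) > f(x) - \varepsilon$ for all sufficiently small $r$, giving $\tilde{f}(x) \ge f(x)$; conversely, $\tilde{f}(x) = f(x)$ means there exists $r > 0$ with $f(y) > f(x) - \varepsilon$ on $B_r(x)$. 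Under the hypothesis of (i), we therefore have $\tilde{f} = f$ $\mu$-almost everywhere, and in particular $\int \tilde{f}\, d\mu = \int f\, d\mu$.

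Part (i) then follows by invoking the classical Portmanteau theorem for bounded lsc functions, which yields $\liminf_n \int \tilde{f}\, d\mu_n \ge \int \tilde{f}\, d\mu$. Combining this with the pointwise inequality $\tilde{f} \le f$ (which gives $\int \tilde{f}\, d\mu_n \le \int f\, d\mu_n$) and with the identity $\int \tilde{f}\, d\mu = \int f\, d\mu$ produces $\liminf_n \int f\, d\mu_n \ge \int f\, d\mu$. For part (ii), I would observe that $f$ is continuous at $x$ if and only if both $f$ and $-f$ are lower semicontinuous at $x$, so the hypothesis of (ii) transfers to $-f$. Applying (i) separately to $f$ and to $-f$ yields
\[
    \liminf_{n\to\infty} \int f\, d\mu_n \ge \int f\, d\mu \quad \text{and} \quad \limsup_{n\to\infty} \int f\, d\mu_n \le \int f\, d\mu,
\]
whence the claimed equality.

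I do not anticipate any real obstacle here: the construction of the lsc envelope is classical, and the two claims are well-known generalizations of the standard Portmanteau theorem. The only mildly delicate point is the equivalence $\tilde{f}(x) = f(x) \Leftrightarrow f$ lsc at $x$, but this is immediate upon unpacking the definitions. Local compactness and second countability of $\mathcal{X}$ play no essential role in the argument beyond ensuring that weak convergence in $\mathcal{M}^1(\mathcal{X})$ is the usual notion to which the classical Portmanteau theorem applies.
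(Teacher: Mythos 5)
Your proof is correct and follows essentially the same route as the paper: both pass to the lower semicontinuous envelope of $f$, note that it agrees with $f$ $\mu$-a.e.\ (so the integrals against $\mu$ coincide) and is dominated by $f$ pointwise, then apply the Portmanteau inequality for bounded lsc functions, and obtain (ii) by applying (i) to $f$ and $-f$. The only difference is that the paper re-derives the lsc-function form of Portmanteau from the open-set form via a layer-cake approximation with indicators of open superlevel sets, whereas you cite it as classical, which is a legitimate shortcut.
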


\begin{proof} \eqref{genport1} We define 
\begin{align*}
    \overline{f}(x):= \min \left\{f, \liminf_{y \stackrel{y\neq x}{\longrightarrow} x} f(y) \right\}, \quad x\in \mathcal{X}.
\end{align*} By construction $\overline{f}$ is lower semicontinuous and $f=\overline{f}$ $\mu$-a.s. 
Set 
\begin{align*}
f_n(x):= \frac{1}{n} \sum_{k\in \N}\inda{(\frac{k}{n},\infty)}\left(\overline{f}(x)-\inf_{y \in \mathcal{X}}\overline{f}(y)\right), \quad x\in \mathcal{X}.
\end{align*} By construction $f_n \stackrel{n \to \infty}{\nearrow} \overline{f}-\inf_{y\in \mathcal{X}} \overline{f}(y)$ pointwise. Note that since $\overline{f}(-\inf_{y\in \mathcal{X}} \overline{f}(y))$ is lower semicontinuous the sets $\{\overline{f}-\inf_{y\in \mathcal{X}} \overline{f}(y)\in (\frac{k}{n},\infty)\}$ are open for all $k,n\in \N$. By the usual Portmanteau theorem
\begin{align*}
    &\liminf_{n\to \infty} \int f -\inf_{y\in \mathcal{X}} \overline{f}(y) d \mu_n 
    \ge \liminf_{n\to \infty} \int \overline{f}-\inf_{y\in \mathcal{X}} \overline{f}(y) d \mu_n \\
    \ge& \liminf_{m\to \infty}\liminf_{n\to \infty}  \int f_m d\mu_n
    = \liminf_{m\to \infty}\frac{1}{m} \sum_{k\in \N}\liminf_{n\to \infty}  \int \inda{(\frac{k}{m},\infty)}\left(\overline{f}-\inf_{y \in \mathcal{X}}\overline{f}(y)\right)d\mu_n\\
    \ge& \liminf_{m\to \infty}\frac{1}{m} \sum_{k\in \N} \int \inda{(\frac{k}{m},\infty)}\left(\overline{f}-\inf_{y \in \mathcal{X}}\overline{f}(y)\right)d\mu
    =\int \overline{f} -\inf_{y \in \mathcal{X}}\overline{f}(y) d\mu\\
    =&  \int f -\inf_{y\in \mathcal{X}} \overline{f}(y) d \mu. 
\end{align*}Adding $\int  \inf_{y\in \mathcal{X}} \overline{f}(y) d \mu = \lim{n \to \infty}\inf_{y\in \mathcal{X}} \overline{f}(y) d \mu_n$ on both sides yields the claim.

\eqref{genport2} By applying \eqref{genport1} to $f$ and $-f$ we obtain
\begin{align*}
    \liminf_{n \to \infty} \int f d\mu_n \ge \int f d\mu\\
\end{align*} and
\begin{align*}
    -\limsup_{n \to \infty} \int f d\mu_n  = \liminf_{n \to \infty} \int -f d\mu_n \ge \int -f d\mu =-\int f d\mu,
\end{align*} so 
\begin{align*}
    \limsup_{n \to \infty} \int f d\mu_n \le \int fd \mu.
\end{align*}
    
\end{proof}

\bibliographystyle{abbrv}
\bibliography{literature}

\end{document}